\newtheorem{thm}{Theorem}
\newtheorem{cor}{Corollary}%
\newtheorem{lem}{Lemma}
\theoremstyle{remark}
\newtheorem{remark}{Remark}
\theoremstyle{definition}
\theoremstyle{plain}
\numberwithin{equation}{section}
\def\CC{{\mathbb C}}
\def\HH{{\mathbb H}}
\def\NN{{\mathbb N}}
\def\spn{\operatorname{span}}
\def\veca{{\text{\boldmath$a$}}}
\def\vecb{{\text{\boldmath$b$}}}
\def\vece{{\text{\boldmath$e$}}}
\def\vecm{{\text{\boldmath$m$}}}
\def\vecr{{\text{\boldmath$r$}}}
\def\vecu{{\text{\boldmath$u$}}}
\def\vecv{{\text{\boldmath$v$}}}
\def\vecw{{\text{\boldmath$w$}}}
\def\vecx{{\text{\boldmath$x$}}}
\def\vecy{{\text{\boldmath$y$}}}
\def\veczeta{{\text{\boldmath$\zeta$}}}
\def\scrB{{\mathcal B}}
\def\scrD{{\mathcal D}}
\def\scrL{{\mathcal L}}
\def\scrR{{\mathcal R}}
\def\dim{\operatorname{dim}}
\def\S{\operatorname{S{}}}
\def\SL{\operatorname{SL}}
\def\SO{\operatorname{SO}}
\def\vol{\operatorname{vol}}
\def\trans{\,^\mathrm{t}\!}
\def\Onder#1#2#3#4#5{#1 \setbox0=\hbox{$#1$}\setbox1=\hbox{$#2$}
       \dimen0=.5\wd0 \dimen1=\dimen0 \dimen2=\dp0 \dimen3=\dimen2
       \advance\dimen0 by .5\wd1 \advance\dimen0 by -#4
       \advance\dimen1 by -.5\wd1 \advance\dimen1 by -#4
       \advance\dimen2 by -#3 \advance\dimen2 by \ht1
       \advance\dimen2 by 0.3ex \advance\dimen3 by #5
        \kern-\dimen0\raisebox{-\dimen2}[0ex][\dimen3]{\box1}
       \kern\dimen1}
\def\utilde#1{\Onder{#1}{\mbox{\char'176}}{0pt}{0ex}{0.5ex}} %
\def\myutilde#1{\Onder{#1}{\mbox{$\sim$}}{0pt}{0ex}{0.5ex}} %
\newcommand{\R}{\mathbb{R}}
\newcommand{\Z}{\mathbb{Z}}
\newcommand{\N}{\mathbb{N}}
\newcommand{\HSm}{{\S_\pm^{n-1}}}
\renewcommand{\aa}{\mathsf{a}}
\newcommand{\kk}{\mathsf{k}}
\newcommand{\nn}{\mathsf{n}}
\newcommand{\sfrac}[2]{{\textstyle \frac {#1}{#2}}}
\newcommand{\col}{\: : \:}
\newcommand{\Si}{\mathcal{S}}
\newcommand{\bn}{\mathbf{0}}
\newcommand{\ta}{\utilde{a}}
\newcommand{\tu}{\utilde{u}}
\newcommand{\tM}{\myutilde{M}}
\newcommand{\tkk}{\utilde{\mathsf{k}}}
\newcommand{\ve}{\varepsilon}
\newcommand{\F}{\mathcal{F}}
\newcommand{\FC}{\mathcal{C}}
\newcommand{\FG}{\mathcal{G}}
\newcommand{\matr}[4]{\left( \begin{matrix} #1 & #2 \\ #3 & #4 \end{matrix} \right) }
\title{On the limit distribution of Frobenius numbers}
\author{Andreas Str\"ombergsson}
\address{Department of Mathematics, Box 480, Uppsala University,
SE-75106 Uppsala, Sweden\newline
\rule[0ex]{0ex}{0ex} \hspace{8pt}{\tt astrombe@math.uu.se}}
\date{\today}
\thanks{2010 \textit{Mathematics Subject Classification.} 
11D07, 11H31.}
\begin{document}

\begin{abstract}
The Frobenius number $g(\veca)$ of an integer vector $\veca$ with positive
coprime coefficients is defined as the largest integer that does not have a
representation as a non-negative integer linear combination of the 
coefficients of $\veca$.
According to a recent result by Marklof, if $\veca$ is taken to be random
in an expanding $d$-dimensional domain $\scrD$, 
then $(a_1\cdots a_d)^{-1/(d-1)}g(\veca)$ %
has a limit distribution.
In the present paper we prove an asymptotic formula for the
(algebraic) tail behavior of this limit distribution.
We also prove that the corresponding upper bound on the probability of
the Frobenius number %
being large holds uniformly with respect to the 
expansion factor of the domain $\scrD$.
Finally we prove that for large $d$, the limit distribution of 
$(a_1\cdots a_d)^{-1/(d-1)}g(\veca)$ %
has almost all of its mass concentrated between
$(d-1)!^{1/(d-1)}$ and $1.757\cdot(d-1)!^{1/(d-1)}$.
The techniques involved in the proofs come from the geometry of numbers,
and in particular we use results by Schmidt on the distribution of
sublattices of $\Z^m$, and bounds by Rogers and Schmidt
on lattice coverings of space with convex bodies.
\end{abstract}

\maketitle

\section{Introduction}

We denote by $\widehat\N^d$ the set of integer vectors in $\R^d$ with positive
coprime coefficients (viz.\ the greatest common divisor of all
coefficients is one).
Given $\veca=(a_1,\ldots,a_d)\in\widehat\N^d$, %
the Frobenius number $g(\veca)=g(a_1,\ldots,a_d)$ is defined as the largest 
integer which is not representable as a non-negative integer 
combination of $a_1,\ldots,a_d$.
The problem of computing $g(\veca)$ is known as the Frobenius problem
or the coin exchange problem, 
and it has been studied extensively.
Cf., e.g., \cite{aR2005} and \cite[Problem C7]{rG2004a}.

In the majority of problems related to Frobenius numbers,
it is more convenient to consider the function
\begin{align}
f(\veca)=f(a_1,\ldots,a_d)=g(a_1,\ldots,a_d)+a_1+\ldots+a_d.
\end{align}
Clearly, $f(\veca)$ is the largest integer which is not a 
\textit{positive} integer combination of $a_1,\ldots,a_d$.

In the case of two variables, $d=2$, the Frobenius number is given
by Sylvester's formula (\cite[Theorem 2.1.1]{aR2005}),
\begin{align}
g(a_1,a_2)=a_1a_2-a_1-a_2
\qquad(\text{viz., }\: f(a_1,a_2)=a_1a_2).
\end{align}
For $d\geq3$ no explicit formula is known.
Arnold 
(\cite{vA99},
\cite{vA2006},
\cite{vA2007})
asked about the behavior of $g(a_1,\ldots,a_d)$ for
a 'random' large vector $(a_1,\ldots,a_d)\in\R^d$.
Davison had previously asked similar questions for $d=3$, in
\cite[Sec.\ 5]{jD94}.
Recently Marklof
(\cite{jM2009}) 
obtained a definitive result for arbitrary $d\geq3$,
generalizing previous results by
Bourgain and Sinai \cite{zByS2007} in the case $d=3$
(cf.\ also 
Shchur, Sinai, Ustinov \cite{vSySaU2009}):
\begin{thm}\label{JENSTHM} 
(Marklof \cite{jM2009}).
Given $d\geq3$, there exists a continuous non-increasing function
$\Psi_d:\R_{\geq0}\to\R_{\geq0}$ with $\Psi_d(0)=1$, such that for
any bounded set $\scrD\subset\R_{\geq0}^d$ with nonempty interior and
boundary of Lebesgue measure zero, and any $R\geq0$,
\begin{align}\label{JENSTHMRES}
\lim_{T\to\infty}\:\frac1{\#(\widehat\N^d\cap T\scrD)}
\#\Bigl\{\veca\in\widehat\N^d\cap T\scrD
\col\frac{f(\veca)}{(a_1\cdots a_d)^{1/(d-1)}}>R\Bigr\}
=\Psi_d(R).
\end{align}
\end{thm}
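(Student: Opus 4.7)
The plan is to recast the limit in \eqref{JENSTHMRES} as an equidistribution statement for lattices naturally attached to $\veca$.

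\emph{Step 1 (geometric reformulation).} For $\veca \in \widehat\N^d$, consider the rank-$(d-1)$ sublattice $\Lambda(\veca) := \veca^\perp \cap \Z^d$, which has covolume $|\veca|$ inside the hyperplane $\veca^\perp$. A classical identity, going back to Kannan, expresses the Frobenius number $f(\veca)$ as the covering radius of $\Lambda(\veca)$ with respect to a convex body $B_\veca \subset \veca^\perp$ derived from the simplex with vertices on the coordinate axes at heights $1/a_1, \ldots, 1/a_d$. Since $\vol_{d-1}(B_\veca)$ is proportional to $(a_1 \cdots a_d)/|\veca|$, simultaneously rescaling $\Lambda(\veca)$ and $B_\veca$ to unit covolume yields
\[
\frac{f(\veca)}{(a_1\cdots a_d)^{1/(d-1)}} = \rho(L_\veca)
\]
for a continuous function $\rho$ on a suitable space $X$ of unimodular lattices of rank $d-1$ (namely $\SL(d-1,\Z)\backslash\SL(d-1,\R)$, modulo $\SO(d-1)$ from the right to account for the choice of isometry $\veca^\perp \cong \R^{d-1}$), where $L_\veca \in X$ is canonically built from the normalized $\Lambda(\veca)$.

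\emph{Step 2 (equidistribution).} The heart of the proof is weak-$*$ convergence of the empirical measures
\[
\mu_T := \frac{1}{\#(\widehat\N^d \cap T\scrD)} \sum_{\veca \in \widehat\N^d \cap T\scrD} \delta_{L_\veca} \;\longrightarrow\; \mu_X,
\]
where $\mu_X$ is the Haar probability measure on $X$. This is an equidistribution theorem for shapes of primitive orthogonal sublattices of $\Z^d$ as $|\veca|$ and the direction of $\veca$ jointly vary, and is essentially an instance of Schmidt's theorem on the distribution of primitive sublattices of $\Z^d$ of a given rank. The Mertens-type counting normalization $\#(\widehat\N^d \cap T\scrD) \sim T^d \vol(\scrD)/\zeta(d)$ cancels the dependence of the limit measure on the shape of $\scrD$.

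\emph{Step 3 (conclusion and main obstacle).} Define $\Psi_d(R) := \mu_X(\{L \in X : \rho(L) > R\})$. Monotonicity is evident; $\Psi_d(0) = 1$ holds because $\rho > 0$ on a set of full measure; and continuity follows from an independent argument showing $\mu_X(\{\rho = R\}) = 0$ for every $R$ (essentially because level sets of $\rho$ form codimension-one subvarieties). The main obstacle is that $\rho$ is \emph{unbounded} on $X$: the covering radius blows up as $L$ enters the cusp, reflecting the familiar fact that vectors $\veca$ whose orthogonal lattice contains unusually short vectors have anomalously large Frobenius numbers. Consequently the indicator $\mathbf{1}_{\{\rho > R\}}$ is not compactly supported on $X$, and the weak-$*$ convergence $\mu_T \to \mu_X$ alone does not yield convergence of $\mu_T(\{\rho > R\})$. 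One must supplement the equidistribution with a uniform tail estimate $\sup_T \mu_T(\{\rho > R_0\}) \to 0$ as $R_0 \to \infty$, amounting to uniform-in-$T$ control of short vectors of $\Lambda(\veca)$. This is precisely where the Rogers--Schmidt bounds on lattice coverings enter the argument.
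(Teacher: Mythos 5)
The theorem under discussion is \emph{cited} from Marklof \cite{jM2009} and is not proved in the present paper, so there is no ``paper's own proof'' to compare against; the paper only recalls the consequence \eqref{PSIFORMULA}, $\Psi_d(R)=\mu_{d-1}(\{L:\rho(L)>R\})$, and Marklof's continuity lemma. That said, your sketch can be assessed against Marklof's actual argument, and your Step~1 (Kannan's identity $f(\veca)/(a_1\cdots a_d)^{1/(d-1)}=\rho(L_\veca)$, with $\rho$ the covering radius of the simplex $\Delta$) is indeed the correct starting point.

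In Steps~2 and~3, however, you attribute the key inputs to the wrong sources. The equidistribution of $L_\veca$ in $X_{d-1}$ as $\veca$ ranges over $\widehat\N^d\cap T\scrD$ is \emph{not} obtained from Schmidt's theorem \cite{wS98}: Schmidt counts rank-$(d-1)$ sublattices of $\Z^d$ with determinant at most $T$, which imposes a different ordering than $\veca\in T\scrD$ and, more importantly, does not keep track of the direction of $\veca$, so it cannot by itself produce a limit for an arbitrary bounded $\scrD$ with boundary of measure zero. Marklof's proof is dynamical: it uses equidistribution of translated horospherical orbits in $\ASL(d-1,\Z)\backslash\ASL(d-1,\R)$ (Ratner/Shah-type results), which is precisely what makes the limit independent of $\scrD$. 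Likewise, the uniform tail estimate you invoke in Step~3 to handle the unboundedness of $\rho$ does not come from the Rogers--Schmidt covering-density theorems. Those theorems are used in \emph{this} paper for the $d\to\infty$ result (Theorem~\ref{SECONDMAINTHM}); in Marklof's proof of Theorem~\ref{JENSTHM} the required tightness is a direct geometry-of-numbers cusp estimate (essentially the bound that, for $M\in\Si_n$, a short vector of $\Z^nM$ forces $a_1$ large --- the same mechanism as Lemma~\ref{LEM0} here --- combined with the Siegel-set volume bound). Finally, your continuity argument (``level sets of $\rho$ are codimension-one subvarieties'') is not adequate: the covering radius of $\Delta$ is piecewise algebraic but not globally so, and one must actually rule out positive-measure level sets; Marklof devotes a separate lemma (Lemma~7 of \cite{jM2009}) to this.

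So: right reformulation, right overall architecture (equidistribution $+$ tightness $+$ Portmanteau), but the two technical pillars are misidentified, and as written Step~2 would not prove the theorem in the stated generality.
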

For arbitrary $d\geq3$, Li 
\cite[Thm.\ 1.3]{hL2010}
has recently obtained an effective version
of Theorem~\ref{JENSTHM}, where \eqref{JENSTHMRES}
is proved to hold with a power convergence rate (w.r.t.\ $T$).%

\begin{figure}
\begin{center}

\framebox{
\begin{minipage}{0.45\textwidth}
\unitlength0.1\textwidth
\begin{picture}(10,7.0)(0,1.2)
\put(-0.3,1){\includegraphics[width=1.05\textwidth,height=0.73\textwidth]{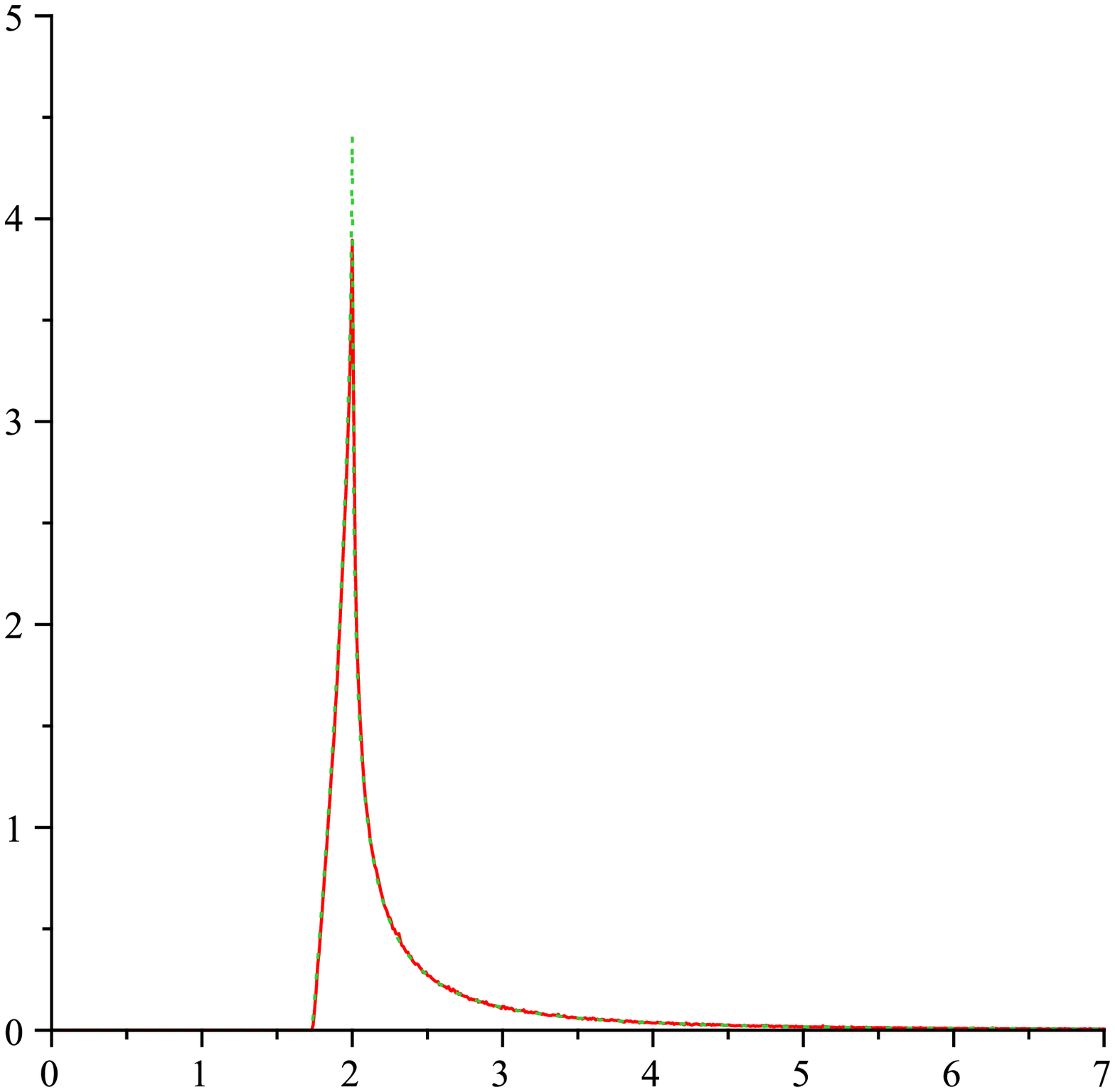}}
\put(8.4,7.3){$d=3$} %
\put(9.7,2.0){\begin{scriptsize}$R$\end{scriptsize}}
\end{picture}
\end{minipage}
}
\framebox{
\begin{minipage}{0.45\textwidth}
\unitlength0.1\textwidth
\begin{picture}(10,7.0)(0,1.2)
\put(-0.3,1){\includegraphics[width=1.05\textwidth,height=0.73\textwidth]{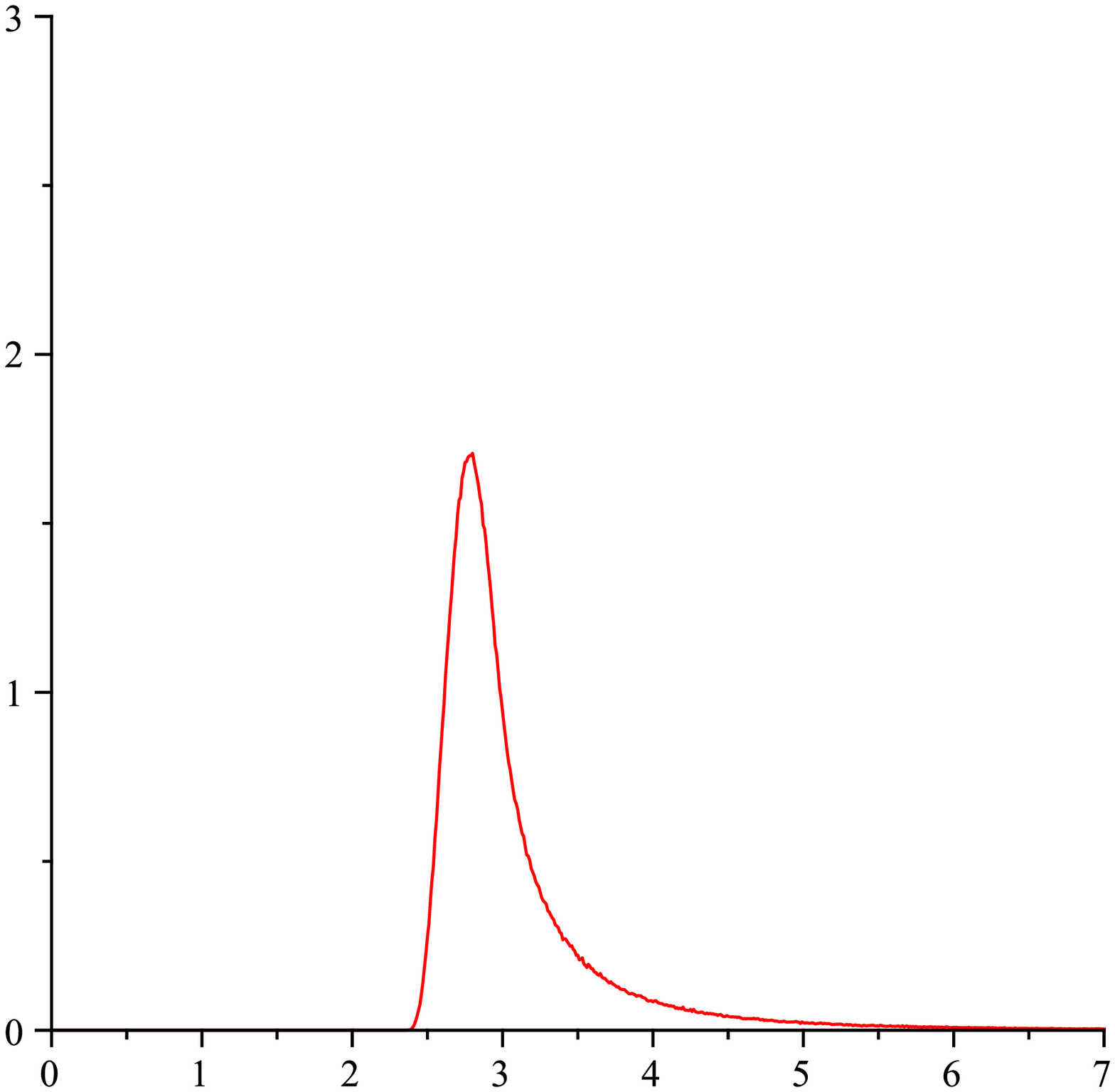}}
\put(8.4,7.3){$d=4$}  %
\put(9.7,2.0){\begin{scriptsize}$R$\end{scriptsize}}
\end{picture}
\end{minipage}
}
\vspace{3pt}

\framebox{
\begin{minipage}{0.45\textwidth}
\unitlength0.1\textwidth
\begin{picture}(10,7.0)(0,1.2)
\put(-0.3,1){\includegraphics[width=1.05\textwidth,height=0.73\textwidth]{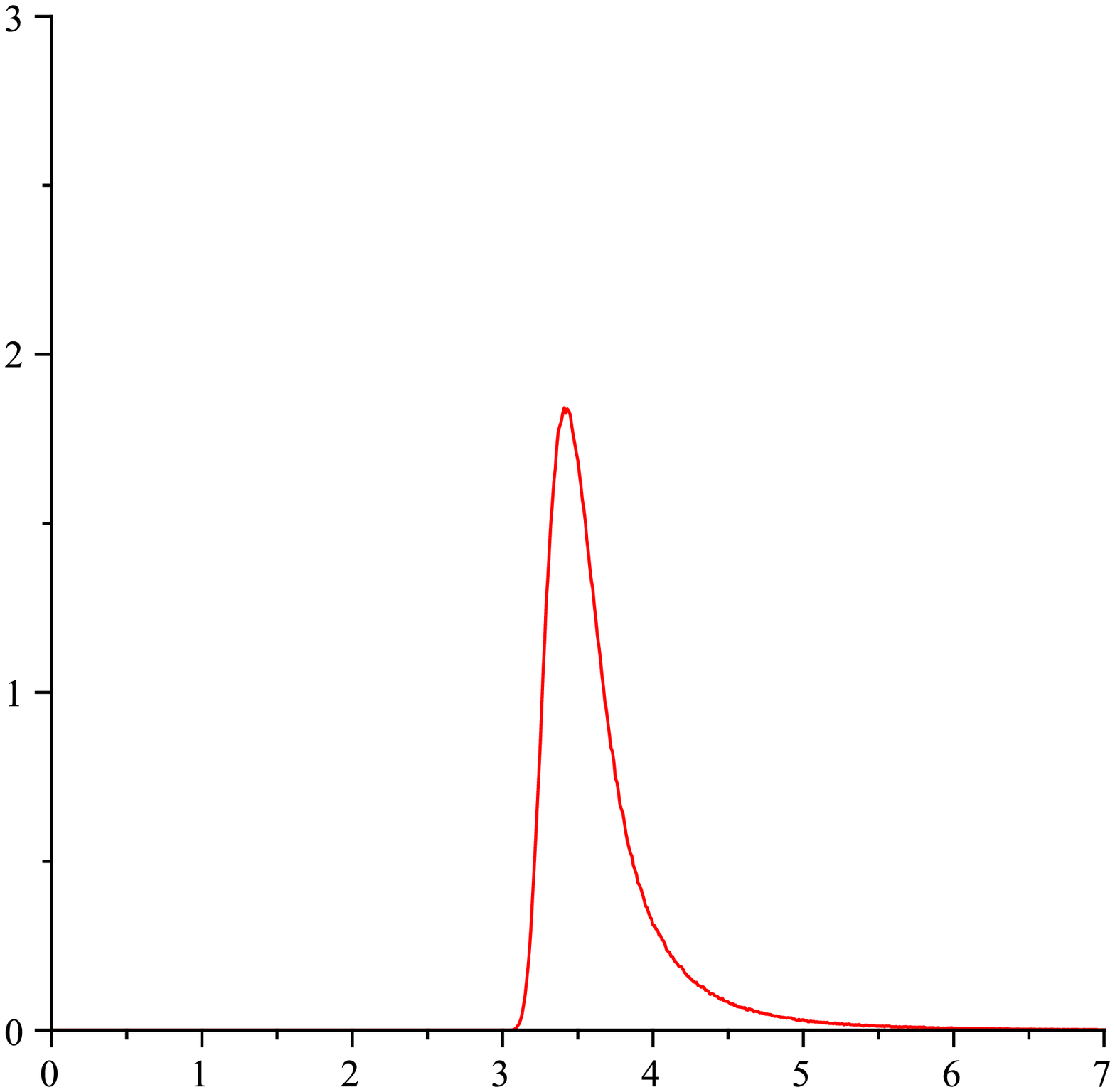}}
\put(8.4,7.3){$d=5$} %
\put(9.7,2.0){\begin{scriptsize}$R$\end{scriptsize}}
\end{picture}
\end{minipage}
}
\framebox{
\begin{minipage}{0.45\textwidth}
\unitlength0.1\textwidth
\begin{picture}(10,7.0)(0.0,1.2)
\put(-0.3,1){\includegraphics[width=1.05\textwidth,height=0.73\textwidth]{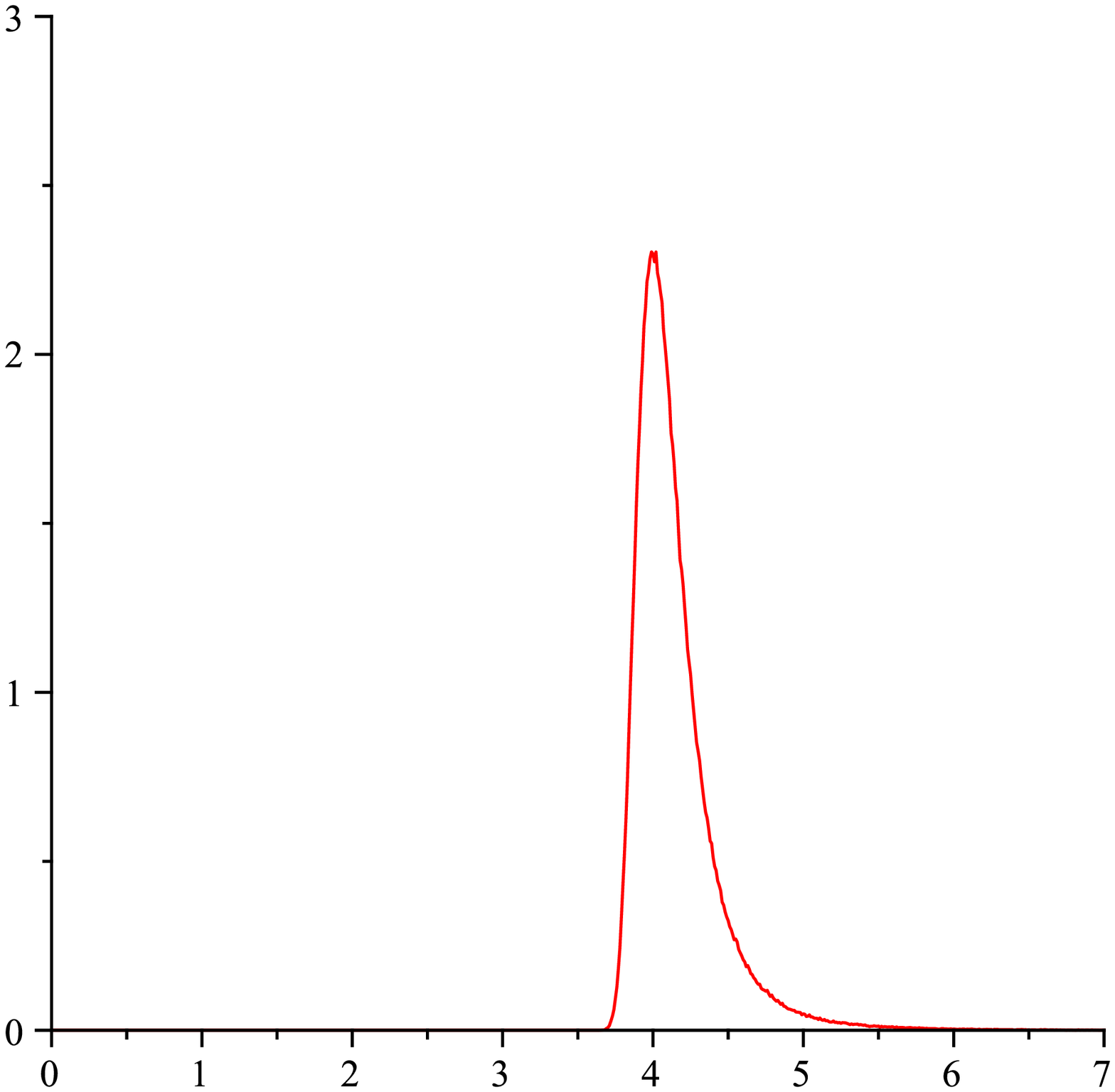}}
\put(8.4,7.3){$d=6$} %
\put(9.7,2.0){\begin{scriptsize}$R$\end{scriptsize}}
\end{picture}
\end{minipage}
}
\end{center}
\caption{Experimental graphs of the density functions \label{FIGURE}
$\psi_d(R)=-\frac d{dR}\Psi_d(R)$ 
of the limit distribution in Theorem \ref{JENSTHM}, for $d=3,4,5,6$. 
The graphs were obtained by computing
$(a_1\cdots a_d)^{-\frac1{d-1}}f(\veca)$ for
$1.2\cdot 10^6$ integer vectors $\veca$ picked at random in
$\widehat\NN^d\cap[0,T]^d$
with $T=10^{15}$, and collecting the results into bins of width $0.01$
along the $R$-axis.
The computations of $f(\veca)$ were performed using
the Frobby software package by Roune \cite{frobby}; cf.\ also \cite{roune2008}.
We repeated the computations using other random seeds and/or changing
$T$ to $10^{14}$, as well as to $10^{13},10^{12},10^{11}$ in some cases,
and the resulting graphs were consistently found to be practically
indistinguishable, except for %
$d=3$ and $R$ very near $2$.
For $d=3$ also the graph of the exact function in \eqref{PSI3EXPL}
is drawn (the dotted curve, which is distinguishable from the experimental
graph only for $R$ very near $2$).}
\end{figure}

Marklof also proved an explicit formula for 
$\Psi_d(R)$, namely that $\Psi_d(R)$ equals 
the probability that the simplex 
\begin{align}\label{DELTADEF}
\Delta=\bigl\{\vecx\in\R_{\geq0}^{d-1}\col\vecx\cdot\vece\leq1\bigr\},
\qquad\vece:=(1,1,\ldots,1),
\end{align}
has covering radius larger than $R$ with respect to a random 
lattice $L\subset\R^{d-1}$ of covolume one.
In other words (\cite[Thm.\ 2]{jM2009}),
\begin{align}\label{PSIFORMULA}
\Psi_d(R)=\mu_{d-1}\bigl(\bigl\{L\in X_{d-1}\col
\rho(L)>R\bigr\}\bigr),
\end{align}
where $X_{d-1}$ is the set of all lattices $L\subset\R^{d-1}$
of covolume one, 
$\mu_{d-1}$ is Siegel's measure (\cite{cS45a})
on $X_{d-1}$, normalized to be a probability measure,
and $\rho(L)$ is the covering radius of $\Delta$ with respect to $L$, viz.
\begin{align}
\rho(L)=\inf\bigl\{\rho>0\col L+\rho\Delta=\R^{d-1}\bigr\}.
\end{align}
In the special case $d=3$, Ustinov \cite{aU2010a}
(cf.\ also \cite{aU2009})
proved a more precise version of \eqref{JENSTHMRES}, %
where the averaging is performed over
only two of the three arguments
$a_1,a_2,a_3$, and the limit is obtained with a power rate of convergence.
Ustinov in fact gave a completely explicit formula for 
the limit density $\psi_3(R)=-\frac d{dR}\Psi_3(R)$
in terms of elementary functions:
\begin{align}\label{PSI3EXPL}
\psi_3(R)=
\begin{cases}
0 & (0\leq R \leq \sqrt 3)  \\
\frac{12}{\pi}\big(\frac{R}{\sqrt 3}-\sqrt{4-R^2}\big) & (\sqrt 3\leq R \leq 2)  \\
\frac{12}{\pi^2}\big( R\sqrt 3 \arccos\big(\frac{R+3\sqrt{R^2-4}}{4\sqrt{R^2-3}}\big)+\frac32 \sqrt{R^2-4}  \log\big(\frac{R^2-4}{R^2-3}\big)\big) & (R>2).
\end{cases}
\end{align}
See also \cite{multiloop}
for a derivation of \eqref{PSI3EXPL} from \eqref{PSIFORMULA}.

Our purpose in the present note is to discuss the behavior of 
$\Psi_d(R)$ for $d$ fixed and $R$ large, as well as for $d$ large.
For fixed $d\geq3$, 
it was proved by Li \cite{hL2010} that $\Psi_d(R)\ll_d R^{-(d-1)}$
for all $R>0$, and Marklof in an unpublished note
\cite{jM2010b}
pointed out that a corresponding lower bound
also holds: $\Psi_d(R)\gg_d R^{-(d-1)}$ for all $R\geq1$.
Our first result, which we will prove in section \ref{PSIDASYMPTSEC},
is an asymptotic formula refining these bounds:
\begin{thm}\label{PSIDASYMPTTHM}
Let $d\geq3$. Then
\begin{align}\label{PSIDASYMPTTHMRES}
\Psi_d(R)=\frac{d}{2\zeta(d-1)}R^{-(d-1)}+O_d(R^{-d-\frac1{d-2}})
\qquad\text{as }\: R\to\infty.
\end{align}
Here the error term is sharp; in fact there exists a constant $c>0$
which only depends on $d$, such that for all sufficiently large $R$,
\begin{align}\label{PSIDASYMPTTHMRES2}
\Psi_d(R)>\frac{d}{2\zeta(d-1)}R^{-(d-1)}+cR^{-d-\frac1{d-2}}.
\end{align}
\end{thm}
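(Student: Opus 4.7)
The plan is to use Marklof's formula \eqref{PSIFORMULA} and translate the condition $\rho(L)>R$ into a statement about short primitive vectors in the dual lattice $L^{*}\subset\R^{d-1}$. The geometric picture: for $R$ large, $\rho(L)>R$ forces $L$ to be degenerate along a hyperplane, witnessed by a short primitive vector $w\in L^{*}$ of length $|w|\sim 1/R$ whose direction must point (approximately) along one of the $d$ inward normals to the facets of $\Delta$; otherwise the ``gap'' of size $1/|w|$ in the $w$-direction is absorbed within the simplex. The factor $d/2$ in the main term arises from the $d$ facets of $\Delta$ combined with the $w\leftrightarrow -w$ identification, while $1/\zeta(d-1)$ is the density of primitive vectors in a $(d-1)$-dimensional lattice.

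Concretely, I would first establish the asymptotic expansion
\begin{align*}
\rho(L) \;=\; \tfrac{1}{|w|} \;+\; c(L,w)\,|w|^{1/(d-2)} \;+\; O\!\left(|w|^{2/(d-2)}\right),
\end{align*}
valid when $w$ is the shortest primitive vector of $L^{*}$ and lies in one of $d$ prescribed cones $C_{1},\ldots,C_{d}\subset\R^{d-1}$. Here $c(L,w)>0$ is essentially the covering radius of the $(d-2)$-dimensional projected lattice $L/\Z v\subset w^{\perp}$ (with $v\in L$ dual to $w$) with respect to the orthogonal projection of $\Delta$. The justification proceeds by reduction theory in the cusp of $X_{d-1}$ combined with the Rogers--Schmidt covering bounds in $\R^{d-2}$ to control the residual $(d-2)$-dimensional covering uniformly; this is the main technical obstacle.

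Given the expansion, $\rho(L)>R$ is equivalent to $|w|<T(R,L):=1/R+c(L,w)R^{-2-1/(d-2)}+O(R^{-2-2/(d-2)})$ with $w\in C_{i}$ for some $i$. Using invariance of $\mu_{d-1}$ under $L\mapsto L^{*}$ and Schmidt's mean value formula for primitive vectors,
\begin{align*}
\Psi_{d}(R) \;=\; \frac{1}{2\zeta(d-1)}\sum_{i=1}^{d}\int_{X_{d-1}}\vol\bigl\{w\in C_{i}:|w|<T(R,L)\bigr\}\,d\mu_{d-1}(L) \;+\; \text{error},
\end{align*}
and expanding $T^{d-1}$ produces the main term $\frac{d}{2\zeta(d-1)}R^{-(d-1)}$ together with a positive correction of order $R^{-d-1/(d-2)}$ (positive because $c(L,w)>0$ almost surely). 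This yields both \eqref{PSIDASYMPTTHMRES} and the sharpness \eqref{PSIDASYMPTTHMRES2}. The multiplicity correction (two primitive vectors in some $C_{i}$) is controlled by Schmidt's second-moment formula and contributes $O(R^{-2(d-1)})$, which is negligible for all $d\geq 3$.
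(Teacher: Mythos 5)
Your high--level idea --- that for large $R$ the event $\rho(L)>R$ is governed by the existence of a short primitive vector $w$ in the dual lattice, and that one should then apply Siegel/Schmidt--type mean value formulas to primitive vectors --- is essentially the same mechanism the paper uses, although the paper parametrizes it through Iwasawa coordinates $M=[a_1,\vecv,\vecu,\tM]$ on a Siegel set rather than directly through $L^*$ (the paper's $a_1\approx|w|^{-1}$ and the paper's $\vecv\approx\pm w/|w|$). However, your proposed expansion contains a genuine error in the leading term. The covering radius of $\Delta$ with respect to a lattice having short dual vector $w$ is governed by the \emph{width} of $\Delta$ in the direction $\hat w=w/|w|$, namely $\ell(\hat w)=\max(0,\hat w_1,\dots,\hat w_{d-1})-\min(0,\hat w_1,\dots,\hat w_{d-1})$, so the leading term is $\rho(L)=\bigl(\ell(\hat w)\,|w|\bigr)^{-1}+\dots$, not $|w|^{-1}+\dots$. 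The quantity $\ell(\hat w)$ varies continuously between $1/\sqrt{d-1}$ and $\sqrt2$, and the constant $\frac{d}{2\zeta(d-1)}$ arises precisely from $\frac{1}{(d-1)\zeta(d-1)}\int_{\HSm}\ell(\hat v)^{-(d-1)}\,d\hat v$ with $\int_{\HSm}\ell^{-(d-1)}=\frac{(d-1)d}{2}$ (the paper's Lemma 7). Your explanation that ``the factor $d/2$ comes from the $d$ facets combined with $w\leftrightarrow-w$'' is therefore not correct: there is no restriction of $\hat w$ to small cones around $d$ discrete normals, and a literal reading of ``otherwise the gap is absorbed within the simplex'' would exclude most directions and give the wrong main term. (One can sanity-check: with your expansion one would get a constant proportional to $\vol(\scrB_1^{d-1})$, i.e. $\pi^{(d-1)/2}/\Gamma(\frac{d+1}{2})$, which disagrees with $d$.)

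Beyond this, the expansion you propose is the crux of the whole theorem and you only sketch a strategy for it; the paper proves the needed two-sided control by two separate, relatively elementary lemmas (one giving $\rho(L)\geq\bigl(\ell(\hat v)|w|\bigr)^{-1}$ directly from the hyperplane picture, one giving the reverse inequality with a correction controlled by the first successive minimum of the $(d-2)$-dimensional cross-sectional lattice). Also, the positivity of the correction (hence \eqref{PSIDASYMPTTHMRES2}) is not automatic from ``$c(L,w)>0$ a.s.'': one needs a quantitative, \emph{uniform} amount by which the threshold on $a_1$ (equivalently on $|w|$) can be relaxed, over a set of directions of positive measure; the paper establishes this by a volume comparison inside $\vecv^\perp$ (Lemma on weakening $a_1>\ell(\vecv)R$). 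If you fix the $\ell(\hat w)$ factor and actually prove the expansion with the correct uniformity, your route would be a legitimate dual reformulation of the paper's argument; as written it has a gap in the main term and unproved core estimates.
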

In particular we may note that %
\eqref{PSI3EXPL} implies
$\Psi_3(R)=\frac9{\pi^2}R^{-2}+\frac{33}{2\pi^2}R^{-4}+O(R^{-6})$ 
as $R\to\infty$,
which is consistent with Theorem \ref{PSIDASYMPTTHM}.

\vspace{5pt}

Combining Theorems \ref{JENSTHM} and \ref{PSIDASYMPTTHM}
we conclude that if $R$ is large, and if $\veca$ is picked at random
from %
a set of the type $\widehat\N^d\cap T\scrD$ with $T$ 
sufficiently large 
--- where the notion of ``sufficiently large'' may depend on $R$ ---
then the probability that the normalized Frobenius number
$\frac{f(\veca)}{(a_1\cdots a_d)^{1/(d-1)}}$ is greater than $R$ 
is approximately $\frac d{2\zeta(d-1)}R^{-(d-1)}$.
It is an interesting problem to try to get a more uniform
control on the probability of 
$\frac{f(\veca)}{(a_1\cdots a_d)^{1/(d-1)}}$ being large,
i.e.\ to give bounds from above and below,
\textit{uniformly with respect to large $T$ and $R$,} on %
\begin{align}
P_d(T,R):=\frac1{\#(\widehat\N^d\cap T\scrD)}
\#\Bigl\{\veca\in\widehat\N^d\cap T\scrD
\col\frac{f(\veca)}{(a_1\cdots a_d)^{1/(d-1)}}>R\Bigr\}.
\end{align}

Results related to this question have recently been obtained by 
Aliev and Henk \cite{iAmH2008} 
and
Aliev, Henk and Hinrichs \cite{iAmHaH2011},
by making use of Schmidt's results on the distribution
of similarity classes of sublattices of $\Z^m$, 
\cite{wS98}.
We will show %
that the application of 
\cite{wS98}
can be refined
--- using in particular the strong uniform %
error bounds which Schmidt provides for his asymptotic formulas ---
so as to give a uniform bound which 
significantly improves upon the bounds obtained
in \cite{iAmH2008}, \cite{iAmHaH2011},
and which can be viewed as a \textit{$T$-uniform} version of 
Li's upper bound $\Psi_d(R)\ll_d R^{-(d-1)}$.

For technical reasons we will consider the Frobenius number
normalized not with the factor $(a_1\cdots a_d)^{-1/(d-1)}$,
but with $s(\veca)^{-1}$, where
\begin{align}
s(\veca):=\frac{\sum_{j=1}^da_j\sqrt{\|\veca\|^2-a_j^2}}{\|\veca\|^{1-1/(d-1)}},
\end{align}
with $\|\veca\|$ denoting the standard Euclidean norm of $\veca$.
Thus, we set: %
\begin{align}\label{PDTRDEF}
\widetilde P_d(T,R):=\frac1{\#(\widehat\N^d\cap T\scrD)}
\#\Bigl\{\veca\in\widehat\N^d\cap T\scrD\col
\frac{f(\veca)}{s(\veca)}>R\Bigr\}.
\end{align}
Note that $P_d(T,R)$ and $\widetilde P_d(T,R)$
are defined for any $T>0$ such that 
$\widehat\N^d\cap T\scrD\neq\emptyset$;
in particular, for any fixed $\scrD\subset\R_{\geq0}^d$ with 
non-empty interior, $P_d(T,R)$ and $\widetilde P_d(T,R)$ are 
defined for all $T\gg_\scrD 1$.

The normalizing factor $s(\veca)$ was used also in 
Aliev and Henk, 
\cite{iAmH2008};
cf.\ also Fukshansky and Robins, \cite{lFsR2007}.
Note that if we assume that the coefficients of $\veca$ %
are ordered so that $a_1\leq a_2\leq\ldots\leq a_d$ then
$s(\veca)\asymp_d a_{d-1}a_d^{\frac1{d-1}}$;
in particular we have 
\begin{align}\label{OBVIOUSSABOUND}
(a_1\cdots a_d)^{\frac1{d-1}}\ll_d s(\veca)\ll_d
\|\veca\|^{\frac d{d-1}},\qquad\forall\veca\in\R_{>0}^d.
\end{align}
Hence there exists a constant $c_1>0$ which only
depends on $d$ such that
\begin{align}
\widetilde P_d(T,c_1R)\leq P_d(T,R),
\end{align}
for any $R>0$ and any $\scrD\subset\R_{\geq0}^d$ and $T>0$ such that 
$\widehat\N^d\cap T\scrD\neq\emptyset$.
On the other hand, if $\scrD$ is bounded and satisfies
$\overline\scrD\subset\R_{>0}^d$, then 
$s(\veca)\asymp (a_1\cdots a_d)^{1/(d-1)}$ holds uniformly over all
$\veca\in\R_{>0}\scrD$,
and thus we have
$P_d(T,R)\leq\widetilde P_d(T,c_2R)$
for all $T,R>0$ with $\widehat\N^d\cap T\scrD\neq\emptyset$, 
where $c_2>0$ is a constant which
only depends on $\scrD$.
Hence for any such region $\scrD$,
any of the two functions $P_d(T,R)$ and $\widetilde P_d(T,R)$ can
essentially be bounded in terms of the other,
as long as we allow an implied constant which may depend on $\scrD$.

Our main result on $\widetilde P_d(T,R)$ is the following bound,
which we will prove in Section \ref{UNIFSEC}.
\begin{thm}\label{UNIFTHM}
Let $d\geq3$, and let $\scrD\subset\R_{\geq0}^d$ be bounded
with nonempty interior. Then
\begin{align}\label{UNIFTHMRES1}
\widetilde P_d(T,R)\ll_{d,\scrD} R^{-(d-1)}, %
\end{align}
uniformly over all $T>0$ with $\widehat\N^d\cap T\scrD\neq\emptyset$,
and all $R>0$. Furthermore,
for any such $T$,
\begin{align}\label{UNIFTHMRES2}
\widetilde P_d(T,R)=0\qquad\text{whenever }\: R\geq
\bigl(T\sup_{\vecx\in\scrD}\|\vecx\|\bigr)^{1-\frac1{d-1}}.
\end{align}
\end{thm}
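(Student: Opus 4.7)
The plan is to re-express the count underlying $\widetilde P_d(T,R)$ as a count of primitive rank $d-1$ sublattices of $\Z^d$ subject to both a size constraint (from $\veca\in T\scrD$) and a shape constraint (from $f(\veca)/s(\veca)>R$), and then to invoke Schmidt's effective equidistribution theorem \cite{wS98} together with Li's bound $\Psi_d(R)\ll_d R^{-(d-1)}$. The key geometric input is that each $\veca\in\widehat\N^d$ corresponds bijectively to the primitive rank $d-1$ sublattice $\Lambda(\veca):=\veca^{\perp}\cap\Z^d$ of $\Z^d$; after rotating $\veca^{\perp}$ onto $\R^{d-1}$ by an isometry taking the positive cone to the positive orthant, and rescaling $\Lambda(\veca)$ to covolume one, one obtains a unimodular lattice $L(\veca)\in X_{d-1}$ with $f(\veca)/s(\veca)=\rho(L(\veca))$, where $\rho$ is the covering radius of the simplex $\Delta$ defined in \eqref{DELTADEF}. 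The normalization $s(\veca)$ is precisely the one that makes this equality exact for every individual $\veca$, which is the technical reason for preferring $s(\veca)$ over $(a_1\cdots a_d)^{1/(d-1)}$.

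Under this parametrization, the numerator of $\widetilde P_d(T,R)$ counts primitive rank $d-1$ sublattices $\Lambda\subset\Z^d$ whose associated positive vector lies in $T\scrD$ and whose normalized version $L(\veca)$ falls in the shape set $\{L\in X_{d-1}\col\rho(L)>R\}$, which has $\mu_{d-1}$-measure $\Psi_d(R)$. At this point I would apply Schmidt's theorem on the number of primitive sublattices in a covolume-bounded region satisfying a prescribed shape condition, making essential use of the explicit polynomial error bound that Schmidt provides uniformly over such shape test sets --- it is precisely this strong uniformity that distinguishes our argument from \cite{iAmH2008} and \cite{iAmHaH2011}. Schmidt's main term is proportional to $\Psi_d(R)\cdot\#(\widehat\N^d\cap T\scrD)$, and combining with Li's estimate $\Psi_d(R)\ll_d R^{-(d-1)}$ yields \eqref{UNIFTHMRES1} once one verifies that the error term is dominated by the main term. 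To arrange this uniformly in $T$ and $R$, I would decompose dyadically in the covolume $\|\veca\|$ and apply Schmidt's estimate separately on each dyadic range, exploiting the constraint $\veca\in T\scrD$ to confine the covolumes essentially to a single scale comparable to $T$.

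The vanishing statement \eqref{UNIFTHMRES2} follows directly from the elementary pointwise bound $f(\veca)\leq s(\veca)\,\|\veca\|^{1-1/(d-1)}$, a minor reformulation of the Fukshansky--Robins bound \cite{lFsR2007} that originally motivated the introduction of $s(\veca)$: since $\|\veca\|\leq T\sup_{\vecx\in\scrD}\|\vecx\|$ for $\veca\in T\scrD$, one obtains $f(\veca)/s(\veca)\leq(T\sup_{\vecx\in\scrD}\|\vecx\|)^{1-1/(d-1)}$, so the counting set in \eqref{PDTRDEF} is empty as soon as $R$ meets the stated threshold. The main obstacle in the proof of \eqref{UNIFTHMRES1} is then the uniformity in $R$: the shape set has $\mu_{d-1}$-measure of order only $R^{-(d-1)}$, an exceedingly small main term that Schmidt's polynomial error term must not swamp, especially in the regime where $R$ is close to the threshold beyond which $\widetilde P_d(T,R)$ vanishes by \eqref{UNIFTHMRES2}.
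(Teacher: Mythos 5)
Your sketch for \eqref{UNIFTHMRES2} is essentially sound: the paper derives the pointwise bound $f(\veca)/s(\veca)<\|\veca\|^{1-1/(d-1)}$ from Schur's inequality rather than from \cite{lFsR2007}, but the structure of the argument is the same and either input works.

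For \eqref{UNIFTHMRES1}, however, there is a genuine gap, and it is exactly the gap that the paper itself identifies as the defect in \cite{iAmH2008}. Your plan is to apply Schmidt's effective equidistribution of sublattices with the shape test set taken to be $\{L\in X_{d-1}\col\rho(L)>R\}$, relying on ``the explicit polynomial error bound that Schmidt provides uniformly over such shape test sets.'' But Schmidt's explicit, uniform error bounds (Thm.\ 5 of \cite{wS98}) are proved only for the very specific family of sets $\scrL_n(\vecr)$ cut out by inequalities $\rho_j(\Lambda)=\lambda_{j+1}(\Lambda)/\lambda_j(\Lambda)\geq r_j$ on ratios of successive minima; they do not apply to an arbitrary Borel shape set, and the set $\{\rho(L)>R\}$ is not of that form. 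Schmidt's general theorem (Thm.\ 2 of \cite{wS98}) does cover arbitrary nice shape sets, but with an unspecified, set-dependent rate of convergence in $T$ --- which is precisely why the bound in \cite{iAmH2008} is only valid for $T$ large in an $R$-dependent way, as the paper's Remark in Section \ref{UNIFSEC} makes explicit. To close this gap one needs the chain of reductions the paper actually carries out: the inequality $f(\veca)/s(\veca)\le\frac12 n\|\veca\|^{-1/n}\lambda_n(\Lambda_\veca)$ from \cite{iAmH2008}/\cite{rK92}; Minkowski's second theorem to convert the event $\lambda_n(\Lambda)$ large into the event $\prod_j\rho_j(\Lambda)^j\gg R^n$; and a finite lattice-point-in-a-simplex style partition over integer exponent vectors $\vecb$ to cover this event by $\ll_d R^{n\ve}$ many sets $\scrL_n(\vecr)$, on each of which Schmidt's Theorem 5 gives a controlled count. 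Then one must still verify that, after summing, the Schmidt error terms are dominated in the regime $R<(\kappa_\scrD T)^{1-1/(d-1)}$ where the count is nonzero. None of these steps is in your sketch, and without them the proof does not go through.

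A second, smaller issue: you assert an exact identity $f(\veca)/s(\veca)=\rho(L(\veca))$ with $\rho$ the covering radius of the fixed simplex $\Delta$. What Kannan's theorem gives is that $f(\veca)$ equals the covering radius of a simplex which \emph{varies with} $\veca$ (the cross-section $\{\vecx\in\R_{\geq0}^d\col\veca\cdot\vecx=1\}$) with respect to the $(d-1)$-dimensional lattice $\Lambda_\veca=\Z^d\cap\veca^\perp$; it rescales to $\Delta$ only in the equidistributed limit, which is the mechanism behind Marklof's Theorem \ref{JENSTHM} but is not an identity valid for individual $\veca$. The normalization $s(\veca)$ only yields a one-sided comparison with $\lambda_n(\Lambda_\veca)$, not the exact equality you state, and the paper uses it as an inequality.
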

Theorem \ref{UNIFTHM} strengthens %
the bound
$\widetilde P_d(T,R)\ll R^{-2}$ which was given %
in \cite[Thm.\ 1.1]{iAmH2008}.
Note also that if the set $\scrD$ satisfies
$\overline\scrD\subset\R_{>0}^d$, then by the previous discussion
Theorem \ref{UNIFTHM} implies %
$P_d(T,R)\ll_{d,\scrD}R^{-(d-1)}$.

\vspace{5pt}

From many points of view, the normalization factor
$(a_1\cdots a_d)^{-1/(d-1)}$ is %
the most natural one to use in the Frobenius problem.
A clear indication of this is for example the fact that the
limit distribution obtained in Theorem \ref{JENSTHM} 
is independent of the choice of $\scrD$.
Hence it is interesting to ask whether the
bound in Theorem \ref{UNIFTHM} is valid also for $P_d(T,R)$,
\textit{without} %
the extra assumption $\overline\scrD\subset\R_{>0}^d$.
We conjecture that this is so. However in the present paper we will
content ourselves with pointing out a weaker bound, %
which follows fairly directly from Theorem \ref{UNIFTHM}
by an argument along the lines of 
\cite{iAmHaH2011},
and which strengthens %
the bound\footnote{We here correct for a mistake in
\cite[p.\ 530, lines 5-6]{iAmHaH2011}
by adding $\ve$ in the exponent: In the notation of \cite{iAmHaH2011}, 
the choice of ``$t=\frac{n-1}{n+1}$'' yields the bound
``$\beta^{-2\frac{(n-1)^2}{n(n+1)}}$'' and not ``$\beta^{-2\frac{n-1}{n+1}}$''
as claimed;
choosing $t$ optimally   %
yields the bound ``$\beta^{-2\frac{(n-1)^2}{n^2+1}}$'',
and using also 
\cite[p.\ 529, Remark 1]{iAmHaH2011} brings the bound down to
``$\beta^{-2\frac{n-1}{n+1}+\ve}$''.}
$P_d(T,R)\ll R^{-2\frac{d-1}{d+1}+\ve}$ obtained in 
\cite{iAmHaH2011}.

\begin{cor}\label{UNIFTHMCOR}
Let $d\geq3$, and let $\scrD\subset\R_{\geq0}^d$ be bounded
with nonempty interior. Then
\begin{align}\label{UNIFTHMCORRES1}
P_d(T,R)\ll_{d,\scrD} R^{-\frac12(d-1)}(\log(R+2))^{\frac12(d-3)}
\end{align}
uniformly over all $T>0$ with $\widehat\N^d\cap T\scrD\neq\emptyset$,
and all $R>0$. Furthermore,
\begin{align}\label{UNIFTHMCORRES2}
P_d(T,R)=0\qquad\text{whenever }\: R\geq
d\bigl(T\sup_{\vecx\in\scrD}\|\vecx\|\bigr)^{1-\frac1{d-1}}.
\end{align}
\end{cor}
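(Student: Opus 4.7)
The plan is to bound $P_d(T,R)$ via a splitting argument along the lines of \cite{iAmHaH2011}, using the sharper $\widetilde P_d$ estimate provided by Theorem \ref{UNIFTHM}. Introduce a threshold $B > 0$ (to be optimized) and partition $\veca \in \widehat\N^d \cap T\scrD$ into a thick part, where $s(\veca) \leq B(a_1\cdots a_d)^{1/(d-1)}$, and a thin part, where the reverse inequality holds.

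On the thick part, the condition $f(\veca)/(a_1\cdots a_d)^{1/(d-1)} > R$ forces $f(\veca)/s(\veca) > R/B$, so Theorem \ref{UNIFTHM} bounds the thick contribution by $\ll_{d,\scrD} (B/R)^{d-1}$. On the thin part, let $a_{(1)} \leq \ldots \leq a_{(d)}$ denote the ordered coordinates; since $s(\veca) \asymp_d a_{(d-1)} a_{(d)}^{1/(d-1)}$, the thin condition is equivalent to $\prod_{j=1}^{d-2} a_{(j)} < c_d\, a_{(d-1)}^{d-2}/B^{d-1}$. A standard divisor-sum argument, sharpened by combining this condition with $f > R(a_1\cdots a_d)^{1/(d-1)}$ and a classical Frobenius bound of the form $f(\veca) \leq c'_d\, a_{(d-1)} a_{(d)}$, bounds the thin fraction by $\ll_{d,\scrD} (\log(R+2))^{d-3}/B^{d-1}$. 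Balancing the two contributions by setting $B^{d-1} \asymp R^{(d-1)/2}(\log(R+2))^{(d-3)/2}$ then yields \eqref{UNIFTHMCORRES1}. For the support statement \eqref{UNIFTHMCORRES2}, \eqref{UNIFTHMRES2} of Theorem \ref{UNIFTHM} gives $f(\veca)/s(\veca) < (T\sup_{\vecx \in \scrD}\|\vecx\|)^{(d-2)/(d-1)}$, and this can be converted into a bound on $f(\veca)/(a_1\cdots a_d)^{1/(d-1)}$ by observing that whenever the ratio $s(\veca)/(a_1\cdots a_d)^{1/(d-1)}$ is large the vector $\veca$ must contain small coordinates, in which case $f(\veca)$ is itself much smaller than $s(\veca)\|\veca\|^{(d-2)/(d-1)}$; a short case analysis then produces the required factor of $d$.

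The main obstacle is obtaining the $(\log R)^{d-3}$ factor (rather than $(\log T)^{d-3}$) in the thin count. A naive divisor estimate yields the latter, which upon balancing gives only the weaker bound $(\log T)^{(d-3)/2}/R^{(d-1)/2}$. Replacing $\log T$ by $\log R$ requires exploiting the joint constraints $s(\veca) > B(a_1\cdots a_d)^{1/(d-1)}$ and $f(\veca) > R(a_1\cdots a_d)^{1/(d-1)}$, together with the Frobenius bound $f(\veca) \leq c'_d\, a_{(d-1)} a_{(d)}$, to restrict the effective summation range of $(a_{(d-1)}, a_{(d)})$ in the divisor sum to a regime in which the logarithmic factor is governed by $R$ rather than by $T$.
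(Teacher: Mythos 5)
Your overall strategy---splitting $\veca$ according to whether the ratio $s(\veca)/(a_1\cdots a_d)^{1/(d-1)}$ is small (``thick'', controlled by Theorem~\ref{UNIFTHM}) or large (``thin'', an AM/GM-type count), and then balancing the threshold---is exactly the paper's, including the final choice of threshold. You also correctly identify the delicate point: to make the balancing give a $T$-uniform bound, the thin count must produce a $(\log(2+\alpha))^{d-3}$ factor in the threshold $\alpha$ rather than a $(\log T)^{d-3}$ factor.

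The gap is in how you propose to obtain that log factor. Your sketch says to combine the thin condition with $f(\veca)>R(a_1\cdots a_d)^{1/(d-1)}$ and a Frobenius bound $f(\veca)\ll a_{(d-1)}a_{(d)}$, in order to ``restrict the effective summation range'' --- but this is left as a black box and is not what the paper does. The paper instead proves a self-contained counting lemma (Lemma~\ref{AGMLEMMA}, sharpening \cite[Thm.\ 2 and Remark 1]{iAmHaH2011}): for $\vecx\in\N^{d-1}$, the number with $\|\vecx\|_\infty\leq T$ and $\|\vecx\|_\infty/(x_1\cdots x_{d-1})^{1/(d-1)}>\alpha$ is $\ll_d T^{d-1}\alpha^{-(d-1)}(\log(2+\alpha))^{d-3}$. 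The key mechanism is that after a dyadic restriction $\|\vecx\|_\infty\asymp T$ and the substitution $x_j=2Te^{-u_j}$, the Jacobian supplies an exponential weight $e^{-\sum u_j}$ that concentrates the integral near $\sum u_j\approx(d-1)\log\alpha$ \emph{regardless of how large $T$ is}; the $\log T$ never enters. No property of $f(\veca)$ is used in the thin count at all, so the ``Frobenius-bound repair'' you invoke is neither needed nor, as written, demonstrably sufficient to recover the stated exponent $(d-3)/2$. To close your proof you would need to actually establish a bound of the quality of Lemma~\ref{AGMLEMMA}.

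A secondary point: for \eqref{UNIFTHMCORRES2}, the paper does not go through \eqref{UNIFTHMRES2} and a case analysis on small coordinates. It is a one-line consequence of Schur's bound: for sorted coefficients, $f(\veca)<d\,a_1a_d$, hence
\begin{align*}
\frac{f(\veca)}{(a_1\cdots a_d)^{1/(d-1)}}<d\cdot\frac{a_1}{(a_1\cdots a_{d-1})^{1/(d-1)}}\cdot a_d^{1-\frac1{d-1}}\leq d\,a_d^{1-\frac1{d-1}}<d\|\veca\|^{1-\frac1{d-1}}.
\end{align*}
Your proposed conversion from \eqref{UNIFTHMRES2} is more convoluted and not worked out; it is worth knowing the direct route.
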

We remark that in the special case $d=3$, it follows from Ustinov
\cite[pp.\ 1025, 1044]{aU2010a}
that the stronger bound $P_3(T,R)\ll_\scrD R^{-2}$ is valid at least so
long as we keep $T\gg R^{22+\ve}$.

It is also interesting to consider the \textit{moments}
of the (normalized) Frobenius number;
in particular the \textit{expected value} has been considered by many authors,
cf., e.g., 
\cite{iAmHaH2011},
\cite{vA99},
\cite{vA2006},
\cite{vA2007},
\cite[Sec.\ 5]{jD94},
\cite{aU2009}.
Note that it follows from Theorem \ref{PSIDASYMPTTHM}
(or just from the upper and lower bounds %
by Li \cite{hL2010} and Marklof \cite{jM2010b})
that the limit distribution described by $\Psi_d(R)$ possesses $k$th moment
for $k=1,\ldots,d-2$, and for no larger (integer) $k$.
Let us write $M_{d,k}$ for this moment:
\begin{align}\label{MDKDEF}
M_{d,k}:=-\int_0^\infty R^k\,d\Psi_d(R)
=k\int_0^\infty R^{k-1}\,\Psi_d(R)\,dR,\qquad
k=1,\ldots,d-2.
\end{align}
Now the following is an easy consequence of
Theorem \ref{JENSTHM} combined with Theorem \ref{UNIFTHM} and
Corollary \ref{UNIFTHMCOR}.
\begin{cor}\label{MOMENTCOR}
Let $d\geq3$, and let $\scrD\subset\R_{\geq0}^d$ be a bounded set
with nonempty interior and boundary of Lebesgue measure zero.
Then for any integer $k$, $1\leq k\leq\lfloor\frac12d-1\rfloor$,
we have convergence of moments:
\begin{align}\label{MOMENTCORRES}
\lim_{T\to\infty}\:\frac1{\#(\widehat\N^d\cap T\scrD)}
\sum_{\veca\in\widehat\N^d\cap T\scrD}
\Bigl(\frac{f(\veca)}{(a_1\cdots a_d)^{1/(d-1)}}\Bigr)^k
=M_{d,k}.
\end{align}
If furthermore $\overline\scrD\subset\R_{>0}^d$, then
\eqref{MOMENTCORRES} holds for all $1\leq k\leq d-2$. %
\end{cor}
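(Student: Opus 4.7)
My plan is to upgrade the pointwise convergence in distribution from Theorem \ref{JENSTHM} to convergence of $k$th moments via a standard uniform integrability argument, with the $T$-uniform tail estimates from Theorem \ref{UNIFTHM} and Corollary \ref{UNIFTHMCOR} providing the required domination. First, I would rewrite each side of \eqref{MOMENTCORRES} as a Stieltjes integral: \eqref{UNIFTHMCORRES2} shows that $R\mapsto P_d(T,R)$ has compact support in $R$ for each fixed $T$, so integration by parts gives $k\int_0^\infty R^{k-1}P_d(T,R)\,dR$ for the left-hand side, while by definition \eqref{MDKDEF} the target $M_{d,k}$ already equals $k\int_0^\infty R^{k-1}\Psi_d(R)\,dR$.

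Next I would split each integral at a threshold $A>0$. On $[0,A]$ the trivial bound $P_d(T,R)\leq 1$ together with the pointwise convergence $P_d(T,R)\to\Psi_d(R)$ for every $R\geq 0$ (which holds by Theorem \ref{JENSTHM} and continuity of $\Psi_d$) yields convergence of the truncated integrals via bounded convergence. On the tail $[A,\infty)$, Corollary \ref{UNIFTHMCOR} supplies the bound $P_d(T,R)\ll_{d,\scrD}R^{-(d-1)/2}(\log(R+2))^{(d-3)/2}$, and the resulting tail integral
\begin{align*}
\int_A^\infty R^{k-1-\frac{d-1}{2}}(\log(R+2))^{\frac{d-3}{2}}\,dR
\end{align*}
is finite and tends to $0$ as $A\to\infty$ precisely when $k<(d-1)/2$; for a positive integer $k$ this is equivalent to $k\leq\lfloor d/2-1\rfloor$. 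The same $T$-uniform tail estimate transfers to the limit function $\Psi_d$ (and can alternatively be read off from Li's upper bound $\Psi_d(R)\ll_d R^{-(d-1)}$ quoted in the introduction), which controls the tail of the limit integral as well. Sending first $T\to\infty$ and then $A\to\infty$ establishes \eqref{MOMENTCORRES} in the stated range.

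For the sharper statement under $\overline\scrD\subset\R_{>0}^d$, I would invoke the comparison $P_d(T,R)\leq\widetilde P_d(T,c_2 R)$ recorded just before Theorem \ref{UNIFTHM} and combine it with \eqref{UNIFTHMRES1} to obtain the stronger uniform decay $P_d(T,R)\ll_{d,\scrD}R^{-(d-1)}$. Re-running the truncation argument with this improved bound replaces the condition $k<(d-1)/2$ by $k<d-1$, extending the admissible range to all integers $1\leq k\leq d-2$, which is optimal since $M_{d,d-1}$ diverges in view of Marklof's matching lower bound $\Psi_d(R)\gg_d R^{-(d-1)}$. The only real obstacle is the careful bookkeeping of the double limit --- in particular, verifying that the $T$-uniform tail estimates from Theorem \ref{UNIFTHM} and Corollary \ref{UNIFTHMCOR} genuinely pass to the limit distribution $\Psi_d$, so that the tails of both the pre-limit and limit integrals can be made simultaneously small before any limits in $T$ are taken.
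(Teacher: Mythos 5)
Your proof is correct and is exactly the standard uniform-integrability / tail-truncation argument that the paper has in mind when it states the corollary is "an easy consequence" of Theorems \ref{JENSTHM}, \ref{UNIFTHM} and Corollary \ref{UNIFTHMCOR} without writing out the details. The bookkeeping of the double limit (take $T\to\infty$ for fixed threshold $A$ via bounded convergence, then $A\to\infty$ using the $T$-uniform tail bounds to control both the pre-limit integral and, by passing the uniform bound to the limit, the tail of $\Psi_d$) is handled correctly, and your translation of the integrability condition $k<(d-1)/2$ into $k\le\lfloor\frac12 d-1\rfloor$ for integers matches the statement.
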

For $d=3$ and $k=1$ the limit relation \eqref{MOMENTCORRES} 
in fact holds without the extra assumption $\overline\scrD\subset\R_{>0}^d$;
this follows from Ustinov
\cite[Thm.\ 1]{aU2009}.
For $d\geq4$ and $k=1$, \eqref{MOMENTCORRES} was proved in
\cite{iAmHaH2011}.

\vspace{5pt}

Finally let us turn to a %
slightly different question:
What can be said about the limit distribution of
Frobenius numbers for \textit{$d$ large?}
Let 
$\rho_{d-1}$
be the 
absolute inhomogeneous minimum of $\Delta$,
viz.\
\begin{align}
\rho_{d-1}
=\inf\bigl\{\rho(L)\col L\in X_{d-1}\bigr\}.
\end{align}
Using \eqref{PSIFORMULA} and the fact that $\Psi_d$ is continuous
(\cite[Lemma 7]{jM2009}),
one easily shows that
\begin{align}
\Psi_d(R)=1\:\text{ for }\:0\leq R\leq\rho_{d-1};\qquad
\text{and }\qquad
\Psi_d(R)<1\:\text{ for }\:R>\rho_{d-1},
\end{align}
i.e.\ the limit distribution described by $\Psi_d(R)$ has support exactly in
the interval $[\rho_{d-1},\infty)$.
In fact $\rho_{d-1}$ is not only a lower bound for the support of the
limit distribution, but a lower bound on the normalized Frobenius number
for \textit{any} input vector; we have
\begin{align}
\frac{f(\veca)}{(a_1\cdots a_d)^{1/(d-1)}}\geq\rho_{d-1},\qquad\forall 
\veca\in\widehat\NN^d,
\end{align}
cf.\ Aliev and Gruber
\cite[Thm.\ 1.1(i)]{iApG2007} as well as 
R\"odseth \cite{oR90}.
It was noted in   %
\cite[(7)]{iApG2007}
that
\begin{align}\label{AGLOWBOUND}
\rho_{d-1}>(d-1)!^{\frac1{d-1}}.
\end{align}
On the other hand
the number $\rho_{d-1}$ is quite near $(d-1)!^{\frac1{d-1}}$ for $d$ large:
It follows from a bound by Rogers
on lattice coverings by general convex bodies,
\cite{cR59},
refined by Gritzmann
\cite{pG85}
in the case of convex bodies satisfying a mild symmetry condition
(cf.\ also \cite[Sec.\ 9]{rDvF2004}, and use the fact that $\Delta$ can be mapped to a regular
$(d-1)$-simplex by a volume preserving linear map), that
\begin{align}
\rho_{d-1}\leq
(d-1)!^{\frac1{d-1}}\biggl(1+O\biggl(\frac{\log d}d\biggr)\biggr)
\qquad\text{as }\: d\to\infty.
\end{align}

When computing the Frobenius numbers
for modest $d$ and several random large vectors $\veca$, 
one notes that the normalized values
$\frac{f(\veca)}{(a_1\cdots a_d)^{1/(d-1)}}$
most often do not exceed 
the experimental value for the  %
lower bound $\rho_{d-1}$ by more than a constant factor $<2$.
This is seen in Figure \ref{FIGURE} above in the cases $d=3,4,5,6$;
the same phenomenon was also noted in 
\cite[Sec.\ 5 (esp.\ Fig.\ 17)]{dBjHaNsW2005}
for $d=4$ and $d=8$.
The following result shows that this behavior continues as $d\to\infty$;
indeed,
for $d$ large,
the distribution described by $\Psi_d(R)$
has almost all of its mass concentrated 
in the interval
between $(d-1)!^{\frac1{d-1}}$ and $1.757\cdot(d-1)!^{\frac1{d-1}}$.

\begin{thm}\label{SECONDMAINTHM}
Let $\eta_0=0.756\ldots$ be the unique real root of 
$e\log \eta+\eta=0$.
Then for any $\alpha>1+\eta_0$ we have
\begin{align}\label{SECONDMAINTHMRES}
\Psi_d\bigl(\alpha(d-1)!^{\frac1{d-1}}\bigr)\to 0 %
\qquad\text{as }\: d\to\infty,
\end{align}
in fact with an exponential rate.
\end{thm}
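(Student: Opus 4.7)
The plan is to use Marklof's formula \eqref{PSIFORMULA}, reducing the problem to estimating $\mu_{d-1}\{L\in X_{d-1}:\rho(L)>R\}$ for $R=\alpha(d-1)!^{1/(d-1)}$; note that $\vol(R\Delta)=\alpha^{d-1}$. First, I set $\phi(\vecx,L):=\inf\{\rho\geq 0:\vecx\in L+\rho\Delta\}$, so that $\rho(L)=\sup_\vecx\phi(\vecx,L)$, and use the identity $\rho\Delta+s\Delta=(\rho+s)\Delta$ (valid since $\Delta$ is convex and contains $\vecnull$) to derive the one-sided Lipschitz estimate $\phi(\vecx_1,L)\leq\phi(\vecx_2,L)+\|\vecx_1-\vecx_2\|_\Delta$, where $\|\vecy\|_\Delta:=\inf\{t\geq 0:\vecy\in t\Delta\}$ is the Minkowski functional of $\Delta$.

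Second, I choose an $\eta\Delta$-net $\vecx_1,\ldots,\vecx_N$ of a fundamental domain $F$ of volume $1$, i.e., $F\subset\bigcup_i(\vecx_i+\eta\Delta)$. By the Rogers-Gritzmann bound on simplex covering density (the same result invoked in the paper just above \eqref{AGLOWBOUND}), one may take $N\leq (1+o(1))(d-1)!/\eta^{d-1}$. Combining with the Lipschitz estimate, if $\rho(L)>R+\eta$ then $\phi(\vecx_i,L)>R$ for some $i$, i.e.\ $L\cap(\vecx_i-R\Delta)=\emptyset$. A union bound therefore yields
\begin{align*}
\mu_{d-1}\{L:\rho(L)>R+\eta\}\leq (1+o(1))\,\frac{(d-1)!}{\eta^{d-1}}\,\sup_{\vecx}\mu_{d-1}\bigl\{L:L\cap(\vecx-R\Delta)=\emptyset\bigr\}.
\end{align*}

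Third, for each $\vecx$ outside $R\Delta$, the convex body $K_\vecx:=\vecx-R\Delta$ has volume $V=\alpha^{d-1}$ and misses the origin. Siegel's mean-value formula gives $\int \#(L\cap K_\vecx)\,d\mu_{d-1}=V$, and the higher factorial-moment formulas of Rogers and Schmidt (valid for orders $k$ up to $\sim d-1$) provide $\int\binom{\#(L\cap K_\vecx)}{k}\,d\mu_{d-1}=V^k/k!+(\text{correction})$ in Poisson-like form. From these one extracts a uniform bound $\sup_\vecx\mu_{d-1}\{L:L\cap K_\vecx=\emptyset\}\leq q_d(V)$. Writing $R+\eta=\alpha'(d-1)!^{1/(d-1)}$, and using Stirling's $(d-1)!^{1/(d-1)}\sim(d-1)/e$ to bring all $d$-dependence onto a common exponential scale, the resulting bound $q_d(\alpha^{d-1})/(\alpha'-\alpha)^{d-1}$ is minimized over the split $\alpha'=\alpha+(\alpha'-\alpha)$; the first-order optimality condition for its exponential decay as $d\to\infty$ reduces to the equation $e\log\eta+\eta=0$, whose unique positive root is $\eta_0$, giving the threshold $\alpha'>1+\eta_0$.

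The main obstacle is producing a sharp enough form of $q_d(V)$. Because the Rogers-Schmidt moment identities are accessible only up to order $\sim d-1$, a truncated Bonferroni expansion cannot reach the naïve Poisson bound $e^{-V}$ in the regime $V=\alpha^{d-1}$, which is exponentially larger than $d$ once $\alpha>1$; nor does the Paley-Zygmund second-moment bound $q_d(V)\lesssim 1/V$ suffice (it only delivers $\alpha'>2$). The specific constant $1+\eta_0$ emerges as the best threshold extractable by using the available moment information to its maximal order, precisely balancing the resulting sub-Poisson probability estimate against the polynomial cost $(\alpha'-\alpha)^{-(d-1)}$ of the union-bound step.
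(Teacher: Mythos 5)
You take a genuinely different route from the paper --- covering a fundamental domain by an $\eta\Delta$-net and applying a union bound over net points --- but there is a gap at the crucial step that cannot be repaired within this framework. Your bound hinges on making
\[
q_d(V):=\sup_{\vecx}\mu_{d-1}\bigl\{L\in X_{d-1}\col L\cap(\vecx-R\Delta)=\emptyset\bigr\}
\]
very small in $V=\vol(R\Delta)=\alpha^{d-1}$. But this quantity is of order $1/V$ and no better: the Chebyshev / second-moment bound gives $q_d(V)\ll 1/V$, and this order is sharp --- that is precisely the content of \cite{aS2010m}, which the present paper cites, where the probability that a random unimodular lattice avoids a large convex body off the origin is shown to be asymptotic to a constant times $1/\vol$. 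No amount of higher factorial-moment information can push an upper bound below the true order, so your union bound delivers at best
\[
\Psi_d(R+\eta)\ll N\cdot q_d(V)\asymp(\alpha'-\alpha)^{-(d-1)}\alpha^{-(d-1)},
\]
which decays exponentially iff $(\alpha'-\alpha)\alpha>1$, i.e.\ $\alpha'>2$. This is exactly the barrier you acknowledge, and your suggestion that ``using the available moment information to its maximal order'' closes the gap to $1+\eta_0$ is not correct; $\eta_0$ does not arise from any moment calculation.

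The paper reaches $1+\eta_0$ by bounding a different quantity: the \emph{expected uncovered density} $\int_{X_n}\epsilon(K',L)\,d\mu_n$, where $\epsilon(K',L)=1-\vol((K'+L)/L)$. This is an average over the translate $\vecx$, not a supremum, and for a convex body $K'$ of volume $rn$ (with $r\approx0.278$ the root of $1+r+\log r=0$, so that $\eta_0=e^{-r}$) Schmidt's Theorem~$10^*$ in \cite{wS59} gives the genuinely exponential bound $\int\epsilon(K',L)\,d\mu_n\leq 2(1+r^{-1})e^{-rn}$. Applying Markov's inequality (to $\epsilon(K',L)$ via this bound, and to $\epsilon(\eta K',L)$ via the light Theorem~\ref{ROGERSLIGHTTHM}) and then invoking Rogers' convexity lemma --- if $\epsilon(K',L)+\epsilon(\eta K',L)<1$ then $(1+\eta)K'+L=\R^n$ --- yields Theorem~\ref{ROGERSPRECISETHM}, and Theorem~\ref{SECONDMAINTHM} follows by taking $K=\alpha(d-1)!^{1/(d-1)}\Delta$. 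This two-scale bootstrap, passing from ``$\epsilon$ small'' to ``$\epsilon$ exactly zero for a dilated body'' with no union bound over translates, is the essential idea your proposal is missing and is what produces the threshold $1+\eta_0$ rather than $2$.
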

In particular, combining Theorem \ref{SECONDMAINTHM} with
Theorem \ref{JENSTHM} and \eqref{AGLOWBOUND},
it follows that for large $d$,
the normalized Frobenius number $\frac{f(\veca)}{(a_1\cdots a_d)^{1/(d-1)}}$
is very likely to lie
between $(d-1)!^{\frac1{d-1}}$ and 
$1.757\cdot(d-1)!^{\frac1{d-1}}$.
In precise terms, %
we have for any fixed $\alpha>\eta_0$:
\begin{align}\notag
\lim_{d\to\infty}
\liminf_{T\to\infty}
\frac1{\#(\widehat\N^d\cap[0,T]^d)}\#\biggl\{\veca\in\widehat\N^d\cap[0,T]^d 
\col(d-1)!^{\frac1{d-1}}<\frac{f(\veca)}{(a_1\cdots a_d)^{1/(d-1)}}
\hspace{50pt}
\\\label{LIMLIMINFSTATEMENT}
<\alpha(d-1)!^{\frac1{d-1}}\biggr\}=1.
\end{align}

Theorem \ref{SECONDMAINTHM} follows from a modification of a
general bound by Rogers on 
lattice coverings of space with convex bodies
\cite{cR58},
further improved by Schmidt 
\cite{wS59}.
We carry this out in Section~\ref{LATTICECOVSEC} below.

\begin{remark}
It is an interesting question whether the bound on 
$\alpha$ in Theorem \ref{SECONDMAINTHM} can be further improved.
Could it be that the limit distribution of Frobenius numbers
in fact
concentrates near $(d-1)!^{\frac1{d-1}}$ as $d\to\infty$,
in the sense that \eqref{SECONDMAINTHMRES} holds for \textit{all} $\alpha>1$?

It is also an interesting task to try prove a good \textit{uniform}
bound on $\Psi_d(R)$ valid for all large $d$ and $R$,
uniting Theorem  \ref{SECONDMAINTHM}
and the fact that $\Psi_d(R)\ll_d R^{-(d-1)}$ as $R\to\infty$.
Even more generally we may ask for a good uniform bound on $P_d(T,R)$
valid for all large $d$, $T$, $R$.
\end{remark}

\subsection*{Acknowledgements}
I am grateful to Jens Marklof for inspiring and helpful discussions.

\section{The asymptotic behavior of $\Psi_d(R)$ as $R\to\infty$}
\label{PSIDASYMPTSEC}

In this section we will prove Theorem \ref*{PSIDASYMPTTHM}.

\subsection{Preliminaries}

Let us write $n=d-1$. Recall that $\Delta$ denotes the standard 
$n$-dimensional simplex defined in \eqref{DELTADEF}.
Given $L\in X_n$ and $\rho>0$, we have
$L+\rho\Delta=\R^n$ if and only if $\veczeta-\rho\Delta$ has non-empty
intersection with $L$ for each $\veczeta\in\R^n$.
Thus, since $L=-L$:
\begin{align}\label{RHOFORMULA}
\rho(L)=\sup\{\rho>0\col\text{there is $\veczeta\in\R^{n}$ such that
$L\cap(\rho\Delta-\veczeta)=\emptyset$}\}.
\end{align}
It follows that the formula for $\Psi_d(R)$, \eqref{PSIFORMULA},
may be rewritten as
\begin{align}\label{PSIFORMULAREWR}
\Psi_d(R)=\mu_n\bigl(\bigl\{L\in X_n\col
\text{there is $\veczeta\in\R^n$ such that
$L\cap(R\Delta-\veczeta)=\emptyset$}\bigr\}.
\end{align}

Let us write $G=G^{(n)}=\SL(n,\R)$ and $\Gamma=\Gamma^{(n)}=\SL(n,\Z)$.
For any $M\in G$, $\Z^nM$ is an $n$-dimensional lattice of covolume one,
and this gives an identification of the space $X_n$ with the
homogeneous space $\Gamma\backslash G$.
Note that $\mu_n$ is the measure on $X_n$ coming from Haar measure on $G$,
normalized to be a probability measure; 
we write $\mu_n$ also for the corresponding Haar measure on $G$.
Let $A=A^{(n)}$ be the subgroup of $G$ consisting of
diagonal matrices with positive entries
\begin{align}\label{ADEF}
\aa(a)=\begin{pmatrix} a_1 & & \\ & \ddots & \\ & & a_n \end{pmatrix}
\in G, \qquad a_j>0,
\end{align}
and let $N=N^{(n)}$ be the subgroup of upper triangular matrices
\begin{align}\label{NUDEF}
\nn(u)=\begin{pmatrix} 1 & u_{12} & \cdots & u_{1n}
\\ & \ddots & \ddots & \vdots 
\\ & & \ddots & u_{n-1,n} 
\\ & & & 1
\end{pmatrix} \in G.
\end{align}
Every element $M\in G$ has a unique Iwasawa decomposition
\begin{align}\label{IWASAWA}
M=\nn(u)\aa(a)\kk,
\end{align}
with $\kk\in \SO(n)$.
We set 
\begin{align}
\mathcal{F}_N=\bigl\{u \col u_{jk} \in (-\sfrac 12,\sfrac 12], \:
1\leq j<k\leq n\bigr\};
\end{align}
then $\{\nn(u) \col u\in \mathcal{F}_N\}$
is a fundamental region for $(\Gamma\cap N)\backslash N$.
We define the following Siegel set:
\begin{align}\label{SIDEF}
\Si_n:=\Bigl\{\nn(u) \aa(a) \kk\in G \col u\in \mathcal{F}_N,\:
0<a_{j+1} \leq \sfrac{2}{\sqrt 3}a_j \: (j=1,\ldots,n-1), 
\: \kk \in \SO(n) \Bigr\}.
\end{align}
It is known that $\Si_n$ contains a fundamental region for
$X_n=\Gamma\backslash G$, and on the other hand
$\Si_n$ is contained in a finite union of fundamental regions for $X_n$
(\cite{aB69}).

\begin{lem}\label{LEM0}
If $R>0$ and $M=\nn(u)\aa(a)\kk\in\Si_n$ satisfy
$\Z^nM\cap(R\Delta-\veczeta)=\emptyset$ for some
$\veczeta\in\R^n$, then $a_1\gg_d R$.
\end{lem}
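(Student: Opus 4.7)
The plan is to translate the assumption into a statement about the covering radius $\rho(\Z^n M)$ and then use the explicit Iwasawa-form control provided by the Siegel set $\Si_n$ to bound that covering radius from above in terms of $a_1$.

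First I would rewrite the hypothesis. By \eqref{RHOFORMULA} (or rather the version on display just before \eqref{PSIFORMULAREWR}), the existence of $\veczeta$ with $\Z^n M\cap(R\Delta-\veczeta)=\emptyset$ is equivalent to $\rho(\Z^n M)\geq R$. So the lemma is the statement $\rho(\Z^n M)\ll_d a_1$ uniformly for $M=\nn(u)\aa(a)\kk\in\Si_n$.

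Next I would control the Euclidean covering radius of $\Z^n M$. The rows $\vecv_1,\ldots,\vecv_n$ of $M$ form a $\Z$-basis of $\Z^n M$, and the $j$th row of $\nn(u)\aa(a)$ is $(0,\ldots,0,a_j,a_{j+1}u_{j,j+1},\ldots,a_nu_{jn})$; multiplying by $\kk\in\SO(n)$ preserves Euclidean length. Using $|u_{jk}|\leq\frac12$ together with the Siegel inequalities $a_{k}\leq(\frac{2}{\sqrt3})^{k-1}a_1$ for all $k$, one immediately gets
\begin{align*}
\|\vecv_j\|\leq C_d\, a_1,\qquad j=1,\ldots,n,
\end{align*}
for a constant $C_d>0$ depending only on $d$. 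The standard rounding argument (write $\veczeta=\sum t_i\vecv_i$, replace each $t_i$ by its nearest integer) then shows that every point of $\R^n$ lies within Euclidean distance $\frac n2\max_j\|\vecv_j\|\leq\frac n2C_d a_1$ of $\Z^n M$.

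Finally, I would pass from the Euclidean covering radius back to $\rho(\Z^n M)$ using the fact that $\Delta$ contains a Euclidean ball of radius $r_0>0$, where $r_0=r_0(n)$ is its inradius (an explicit positive constant). Consequently, for every $\veczeta\in\R^n$ the body $R\Delta-\veczeta$ contains some Euclidean ball of radius $r_0R$; if the Euclidean covering radius of $\Z^n M$ is strictly less than $r_0R$, then that ball, and hence $R\Delta-\veczeta$, must contain a lattice point, contradicting the assumption. Therefore $\rho(\Z^n M)\geq R$ forces $r_0R\leq\frac n2 C_d a_1$, i.e.\ $a_1\gg_d R$, as required.

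There is no serious obstacle here; the argument is essentially bookkeeping once one has both the Siegel-set length bound on the rows of $M$ and the observation that $\Delta$ has positive inradius. The only thing to watch is the direction of the implications in the equivalence between $\rho(L)\geq R$ and the existence of an empty translate of $R\Delta$, but this is supplied directly by \eqref{RHOFORMULA} together with the symmetry $L=-L$.
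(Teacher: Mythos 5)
Your proof is correct, and it is genuinely more self-contained than the paper's. The paper dispatches Lemma \ref{LEM0} with a single sentence by observing that $R\Delta$ contains a Euclidean ball of radius $\gg_d R$ and then invoking \cite[Lemma 2.1]{aS2010m} as a black box (that cited lemma is precisely the statement that if $M=\nn(u)\aa(a)\kk\in\Si_n$ and $\Z^nM$ misses a ball of radius $r$, then $a_1\gg_n r$); the paper also mentions an alternative route via Jarnik's inequalities combined with $a_1\asymp_d\lambda_n(\Z^nM)$. What you have done is essentially supply a from-scratch proof of that cited lemma: the Siegel-set inequalities $a_{k}\leq(\tfrac{2}{\sqrt3})^{k-1}a_1$ and $|u_{jk}|\leq\tfrac12$ give $\|\vecv_j\|\ll_d a_1$ for each row $\vecv_j$ of $M$ (since $\kk\in\SO(n)$ preserves length), the rounding argument bounds the Euclidean covering radius by $\tfrac{n}{2}\max_j\|\vecv_j\|\ll_d a_1$, and the positive inradius of $\Delta$ converts this into $\rho(\Z^nM)\ll_d a_1$, which is exactly $a_1\gg_d R$ under the hypothesis. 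The paper's approach is terser and offloads the work to a reference; yours has the advantage of being completely self-contained and making visible the (elementary) geometry behind the bound. One small stylistic point: you state the hypothesis is \emph{equivalent} to $\rho(\Z^nM)\geq R$, but you only need and only use the forward implication (existence of an empty translate $\Rightarrow\rho\geq R$), which is the direction that \eqref{RHOFORMULA} gives directly; it would be cleaner to phrase it that way.
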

\begin{proof}
Note that $R\Delta$ contains a ball of radius $\gg_d R$.
Now the lemma follows from 
\cite[Lemma 2.1]{aS2010m}.
\end{proof}

Alternatively, Lemma \ref{LEM0} follows from 
Jarnik's inequalities
(cf., e.g., %
\cite[p.\ 99]{pGcL87})
together with the fact that $a_1\asymp_d\lambda_n$,
where $\lambda_n$ is the last successive mimimum of the lattice 
$\Z^nM$ (cf.\ \eqref{SUCCMINDEF} below).

Let us remark that using the above lemma together with \eqref{PSIFORMULAREWR}
and the bound
\begin{align}
\mu_n\bigl(\bigl\{M\in\Si_n\col a_1>A\bigr\}\bigr)\ll_d A^{-n},
\qquad\forall A>0
\end{align}
(cf.\ the proof of \cite[Lemma 2.4]{aS2010m}),
we immediately deduce the upper bound 
\begin{align}
\Psi_d(R)\ll_d R^{-n}
\end{align}
which was proved by Li 
\cite[Thm.\ 1.2]{hL2010}
in a different (but closely related) way.

We next recall the parametrization of $G=G^{(n)}$ by
$\R_{>0}\times\S_1^{n-1}\times\R^{n-1}\times G^{(n-1)}$
introduced in
\cite[(2.9)--(2.11)]{aS2010m}.
Let us fix a function $f$ (smooth except possibly at one point, say)
$\S^{n-1}_1\to\SO(n)$ such that
$\vece_1 f(\vecv)=\vecv$ for all $\vecv\in \S_1^{n-1}$
(where $\vece_1=(1,0,\ldots,0)$).
Given $M=\nn(u)\aa(a)\kk\in G$,
the matrices $\nn(u)$, $\aa(a)$ and $\kk$ can be split uniquely as
\begin{align}\label{NAKSPLITDEF}
\nn(u)=\matr 1\vecu{\trans\bn}{\nn(\tu)}; \qquad
\aa(a)=\matr{a_1}\bn{\trans\bn}{a_1^{-\frac 1{n-1}}\aa(\ta)}; \qquad
\kk=\matr 1\bn{\trans\bn}{\tkk} f(\vecv)
\end{align}
where $\vecu\in\R^{n-1}$, $\nn(\tu)\in N^{(n-1)}$,
$a_1>0$, $\aa(\ta)\in A^{(n-1)}$ and
$\tkk\in\SO(n-1)$, $\vecv\in\S_1^{n-1}$.
We set
\begin{align}\label{TMDEF}
\tM=\nn(\tu)\aa(\ta)\tkk\in G^{(n-1)}.
\end{align}
In this way we get a bijection between $G$ and
$\R_{>0}\times\S_1^{n-1}\times\R^{n-1}\times G^{(n-1)}$;
we write $M=[a_1,\vecv,\vecu,\tM]$ for the element in $G$ corresponding to
the 4-tuple
$\langle a_1,\vecv,\vecu,\tM\rangle\in
\R_{>0}\times\S_1^{n-1}\times\R^{n-1}\times G^{(n-1)}$.
The Haar measure $\mu_n$ takes the following form
in the parametrization $M=[a_1,\vecv,\vecu,\tM]$:
\begin{align}\label{SLDRSPLITHAAR}
d\mu_n(M)=\zeta(n)^{-1} \,d\mu_{n-1}(\tM)\,
d\vecu \, d\vecv\,\frac{da_1}{a_1^{n+1}},
\end{align}
where $d\vecu$ is standard Lebesgue measure on $\R^{n-1}$ and
$d\vecv$ is the $(n-1)$-dimensional volume measure on $\S_1^{n-1}$
(\cite[(2.12)]{aS2010m}).
Note that all of the above claims are valid also for $n=2$,
with the natural interpretation that $\Si_1=\SL(1,\R)=\{1\}$ 
with $\mu_1(\{1\})=1$.

\subsection{On the intersection of $\Delta$ and a hyperplane 
orthogonal to $\vecv$}
Given $M=[a_1,\vecv,\vecu,\tM]$, the points in the lattice
$\Z^nM$ are given by the formula
\begin{align}\label{LATTICEINPARAM}
(k,\vecm)M
=ka_1\vecv+a_1^{-\frac1{n-1}}\bigl(0,k\vecu\aa(\ta)\tkk+\vecm\tM\bigr)
f(\vecv)
\qquad (\forall k\in\Z,\: \vecm\in\Z^{n-1}).
\end{align}
In particular $\Z^nM$ is contained in the union of the
(parallel) hyperplanes $ka_1\vecv+\vecv^\perp$:
\begin{align}\label{LATTICECONTAINEMENT}
\Z^nM\subset\bigcup_{k\in\Z} \bigl(ka_1\vecv+\vecv^\perp\bigr).
\end{align}
Note that for each $k$, the ($n-1$)-dimensional affine lattice
$\Z^nM\cap(ka_1\vecv+\vecv^\perp)$ has covolume $a_1^{-1}$
inside $ka_1\vecv+\vecv^\perp$.
Hence if $a_1$ is large then this point set %
typically covers $ka_1\vecv+\vecv^\perp$ well in the sense that
the maximal distance from $\Z^nM\cap(ka_1\vecv+\vecv^\perp)$ to any point
in $ka_1\vecv+\vecv^\perp$ is small.

Given $\vecv=(v_1,\ldots,v_n)\in\S_1^{n-1}$ we let 
$P_\vecv:\R^{n}\mapsto\R^{n}$ be orthogonal projection
onto the line $\R\vecv$, viz.
\begin{align}
P_\vecv(\vecx):=(\vecx\cdot\vecv)\vecv.
\end{align}
Note that $P_\vecv(\Delta)$ is a closed line segment;
let us denote by $\ell(\vecv)$ the length of this line segment.
In other words, $\ell(\vecv)$ is the width of $\Delta$ in the direction
$\vecv$.
Since $\Delta$ is the convex hull of 
$\{\bn,\vece_1,\vece_2,\ldots,\vece_{n}\}$,
where $\vece_j$ is the $j$th standard basis vector of $\R^n$,
$P_\vecv(\Delta)$ is the convex hull of
$\{P_\vecv(\bn),P_\vecv(\vece_1),\ldots,P_\vecv(\vece_{n})\}$,
and here $P_\vecv(\bn)=\bn$ and $P_\vecv(\vece_j)=v_j\vecv$.
Hence
\begin{align}
\ell(\vecv)=
\ell_+(\vecv)-\ell_-(\vecv),
\end{align}
where 
\begin{align}\label{ELLPMDEF}
\ell_+(\vecv):=\max(0,v_1,\ldots,v_n);\qquad
\ell_-(\vecv):=\min(0,v_1,\ldots,v_n).
\end{align}
In particular $\frac1{\sqrt{n}}\leq\ell(\vecv)\leq\sqrt2$.
\begin{lem}\label{LEM1}
If $R>0$, $M=[a_1,\vecv,\vecu,\tM]$ and $a_1>\ell(\vecv)R$, then there exists
$\veczeta\in\R^n$ such that $\Z^nM\cap(R\Delta-\veczeta)=\emptyset$.
\end{lem}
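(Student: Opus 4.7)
The plan is to exploit the hyperplane decomposition of $\Z^n M$ given in \eqref{LATTICECONTAINEMENT}: the lattice sits inside the countable family of parallel hyperplanes $ka_1\vecv+\vecv^\perp$, $k\in\Z$, whose spacing (measured along $\vecv$) is exactly $a_1$. Since the width of $R\Delta$ in the direction $\vecv$ is $R\ell(\vecv)$ and we assume $a_1>R\ell(\vecv)$, there is strictly more room between consecutive hyperplanes than $R\Delta$ needs. Therefore a suitable translate $R\Delta-\veczeta$ will fit entirely inside the open slab between two consecutive hyperplanes, and hence avoid $\Z^nM$ altogether.

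To carry this out, I would first record that $P_\vecv(R\Delta)=\{t\vecv\col t\in[R\ell_-(\vecv),R\ell_+(\vecv)]\}$, so projecting a translate $R\Delta-\veczeta$ onto $\R\vecv$ gives the interval $[R\ell_-(\vecv)-s,R\ell_+(\vecv)-s]\vecv$, where $s=\vecv\cdot\veczeta$. The union of hyperplanes in \eqref{LATTICECONTAINEMENT} projects onto the discrete set $\{ka_1\vecv\col k\in\Z\}$. Hence it suffices to choose $s$ so that the interval $[R\ell_-(\vecv)-s,R\ell_+(\vecv)-s]$ contains no integer multiple of $a_1$; taking $\veczeta=s\vecv$ (or any vector with $\vecv$-coordinate $s$) will then do the job.

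The condition that the closed interval $[R\ell_-(\vecv)-s,R\ell_+(\vecv)-s]$ lies strictly between $0$ and $a_1$ amounts to
\begin{align*}
R\ell_+(\vecv)-a_1<s<R\ell_-(\vecv),
\end{align*}
which defines a non-empty open interval precisely because $R\ell_+(\vecv)-R\ell_-(\vecv)=R\ell(\vecv)<a_1$. Any such $s$ yields a valid $\veczeta$, showing $\Z^nM\cap(R\Delta-\veczeta)=\emptyset$. There is no serious obstacle here; the only point to be careful about is the geometric interpretation of $\ell(\vecv)$ and the fact that $\ell_-(\vecv)\le 0\le \ell_+(\vecv)$ (because $\mathbf{0}\in\Delta$), so that the slab $\{0<\vecx\cdot\vecv<a_1\}$ is indeed the right one to aim for.
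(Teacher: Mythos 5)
Your proof is correct and follows essentially the same route as the paper: both use the hyperplane decomposition \eqref{LATTICECONTAINEMENT} together with the observation that the width $R\ell(\vecv)$ of $R\Delta$ in direction $\vecv$ is less than the spacing $a_1$ of the hyperplanes, so that a translate fits into the open slab $\{0<\vecx\cdot\vecv<a_1\}$. Your explicit computation of the admissible range $R\ell_+(\vecv)-a_1<s<R\ell_-(\vecv)$ for $s=\vecv\cdot\veczeta$ merely spells out the non-emptiness argument that the paper states in words.
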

\begin{proof}
Because of \eqref{LATTICECONTAINEMENT}, 
$\Z^nM\cap(R\Delta-\veczeta)=\emptyset$ certainly holds
whenever $R\Delta-\veczeta$ lies completely inside the open
strip contained between the two parallel hyperplanes $\vecv^\perp$ and
$a_1\vecv+\vecv^\perp$, and 
this holds if and only if $P_\vecv(R\Delta-\veczeta)\subset
\{t\vecv\col 0<t<a_1\}$. There exist vectors $\veczeta$
satisfying the last inclusion if and only if $\ell(\vecv)R<a_1$.
\end{proof}

We next seek to obtain restrictions on those lattices
$\Z^nM$ with 
$M=[a_1,\vecv,\vecu,\tM]$ and $a_1\leq \ell(\vecv)R$ 
which still satisfy
$\Z^nM\cap(R\Delta-\veczeta)=\emptyset$ for some $\veczeta\in\R^n$.
We first prove the following simple geometric fact.
\begin{lem}\label{LEM2}
For any $\vecv\in\S_1^{n-1}$ and $x\in\R$,
the hyperplane $x\vecv+\vecv^\perp$
intersects $\Delta$ if and only if 
$x\in[\ell_-(\vecv),\ell_+(\vecv)]$,
and furthermore when this happens,
$(x\vecv+\vecv^\perp)\cap\Delta$ contains an $(n-1)$-dimensional ball
of radius %
$(2\sqrt n+n)^{-1}\min\bigl(x-\ell_-(\vecv),\ell_+(\vecv)-x\bigr).$
\end{lem}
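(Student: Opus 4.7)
My plan is to treat the two assertions of the lemma separately.

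For the ``if and only if'' statement, I would observe that since $\Delta$ is the convex hull of $\{\bn,\vece_1,\ldots,\vece_n\}$, its image under orthogonal projection onto the line $\R\vecv$ is the convex hull of the scalars $\{0,v_1,\ldots,v_n\}$, namely the interval $[\ell_-(\vecv),\ell_+(\vecv)]$. Hence the hyperplane $x\vecv+\vecv^\perp$ meets $\Delta$ exactly when $x$ lies in this interval.

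For the radius bound, set $d_+:=\ell_+(\vecv)-x$, $d_-:=x-\ell_-(\vecv)$, $t:=\min(d_-,d_+)$, and $r:=t/(n+2\sqrt n)$. The goal is to produce a point $\vecp\in\Delta$ on the hyperplane $x\vecv+\vecv^\perp$ whose Euclidean distance to every facet of $\Delta$ is at least $r$. Once this is done, the $n$-ball $B(\vecp,r)$ lies in $\Delta$ by convexity, so $B(\vecp,r)\cap(x\vecv+\vecv^\perp)$ gives the required $(n-1)$-ball of radius $r$ in the cross-section. Writing $\vecp=(p_1,\ldots,p_n)$, the distances from $\vecp$ to the $n$ coordinate facets $\{x_j=0\}$ and to the slanted facet $\{\sum_j x_j=1\}$ are $p_j$ and $(1-\sum_j p_j)/\sqrt n$ respectively, and $\vecp$ must satisfy $\sum_j p_j v_j=x$; locating $\vecp$ is thus a small feasibility problem.

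After substituting $p_j=r+q_j$ with $q_j\geq 0$, this feasibility problem becomes: find $q_j\geq 0$ with $\sum_j q_j\leq S:=1-r(n+\sqrt n)$ and $\sum_j q_j v_j=T:=x-r\sum_j v_j$. It is feasible exactly when $S\geq 0$ and $T\in[S\ell_-(\vecv),S\ell_+(\vecv)]$, and unwinding these conditions yields the two scalar inequalities $rA_+\leq d_+$ and $rA_-\leq d_-$, where
\[ A_+:=(n+\sqrt n)\ell_+(\vecv)-\sum_{j=1}^n v_j,\qquad A_-:=\sum_{j=1}^n v_j-(n+\sqrt n)\ell_-(\vecv).\]

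The key technical step---and the main obstacle---is the bound $\max(A_+,A_-)\leq n+2\sqrt n$. I would prove this by combining $0\leq\ell_+(\vecv)\leq 1$ and $-1\leq\ell_-(\vecv)\leq 0$ (both valid since $|v_j|\leq\|\vecv\|=1$) with the Cauchy--Schwarz estimate $|\sum_{j=1}^n v_j|\leq\|\vecv\|\sqrt n=\sqrt n$. With this bound in hand, both $rA_\pm\leq d_\pm$ reduce to $r(n+2\sqrt n)=t\leq\min(d_-,d_+)$, and the side condition $S\geq 0$ is automatic because $t\leq\ell(\vecv)/2\leq 1/\sqrt 2$ forces $r(n+\sqrt n)<1$. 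Note that the cruder estimate $\sum_{j=1}^n v_j\geq n\ell_-(\vecv)$ (coming from $v_j\geq\ell_-(\vecv)$) is not sharp enough to recover the constant $(n+2\sqrt n)^{-1}$; the use of Cauchy--Schwarz is essential.
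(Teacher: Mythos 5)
Your proof is correct and takes essentially the same route as the paper: you project $\Delta$ onto $\R\vecv$ for the first assertion, and for the second you construct a point $\vecp$ on the hyperplane with an $n$-ball $B(\vecp,r)\subset\Delta$, reduce the existence of such $\vecp$ to a linear feasibility problem (your substitution $p_j=r+q_j$ with $\vecq\in S\Delta$ and $\vecq\cdot\vecv=T$ is exactly the paper's reformulation $\vecy-r\vece\in(1-(\sqrt n+n)r)\Delta$), and close the argument with the Cauchy--Schwarz bound $|\sum_j v_j|\leq\sqrt n$, which is precisely the estimate $|\vecv\cdot\vece|\leq\sqrt n$ used in the paper to bound $|\alpha_\pm-\ell_\pm(\vecv)|\leq r(2\sqrt n+n)$.
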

\begin{proof}
The first statement follows since
$x\vecv+\vecv^\perp$ intersects $\Delta$ if and only if
$x\vecv\in P_\vecv(\Delta)$, and
$P_\vecv(\Delta)=\{t\vecv\col \ell_-(\vecv)\leq t\leq\ell_+(\vecv)\}$.

To prove the second statement we will prove the stronger fact
that if $x\in[\ell_-(\vecv),\ell_+(\vecv)]$ then
there is some $\vecy\in x\vecv+\vecv^\perp$
such that $\vecy+\scrB_r^{n}\subset\Delta$,
where
\begin{align}\label{LEM2PF1}
r:=(2\sqrt n+n)^{-1}\min\bigl(x-\ell_-(\vecv),\ell_+(\vecv)-x\bigr),
\end{align}
and where $\scrB_r^n$ denotes the closed $n$-dimensional ball of radius $r$
centered at $\bn$ (thus $\vecy+\scrB_r^n$ is the ball of radius $r$ 
centered at $\vecy$).

For an arbitrary point $\vecy=(y_1,\ldots,y_n)\in\R^n$ we 
note that $\vecy+\scrB_r^{n}\subset\Delta$ holds if and only if
$y_1,\ldots,y_{n}\geq r$
and $y_1+\ldots+y_{n}\leq1-\sqrt nr$,
which is equivalent to saying that 
$(\sqrt n+n)r\leq1$ and $\vecy-r\vece\in (1-(\sqrt n+n)r)\Delta$.
The condition $(\sqrt n+n)r\leq1$ is clearly fulfilled
for our $r$,
since %
$\min\bigl(x-\ell_-(\vecv),\ell_+(\vecv)-x\bigr)\leq
\sfrac12\ell(\vecv)
\leq2^{-\frac12}$.

Hence, since $\Delta$ is the convex hull of $\{\bn,\vece_1,\ldots,\vece_n\}$,
it follows that %
there exists a point 
$\vecy\in x\vecv+\vecv^\perp$ with $\vecy+\scrB_r^{n}\subset\Delta$ 
if and only if $x$ lies in the (1-dimensional) convex hull of the
$n+1$ numbers
\begin{align}
r\vecv\cdot\vece  %
\quad\text{and}\quad
r\vecv\cdot\vece %
+\bigl(1-(\sqrt n+n)r\bigr)v_j\quad\text{for}\quad j=1,2,\ldots,n.
\end{align}
Recalling \eqref{ELLPMDEF} we see that this holds if and only if
$x\in[\alpha_-,\alpha_+]$, where
\begin{align}
\alpha_\pm:=r\vecv\cdot\vece+\bigl(1-(\sqrt n+n)r\bigr)\ell_\pm(\vecv)
\end{align}
However
\begin{align}
\bigl|\alpha_\pm-\ell_\pm(\vecv)\bigr|
\leq r|\vecv\cdot\vece|+(\sqrt n+n)r|\ell_\pm(\vecv)|
\leq r\bigl(\sqrt n+\sqrt n+n\bigr).
\end{align}
Hence $x\in[\alpha_-,\alpha_+]$ certainly holds whenever
\begin{align}
\ell_-(\vecv)+(2\sqrt n+n)r\leq x\leq\ell_+(\vecv)-(2\sqrt n+n)r,
\end{align}
and this condition is clearly fulfilled for our $r$ in
\eqref{LEM2PF1}.
\end{proof}

\begin{lem}\label{LEM3}
If $R>0$, $M=[a_1,\vecv,\vecu,\tM]\in\Si_{n}$ and $a_1\leq \ell(\vecv)R$,
and if $\Z^{n}M\cap(R\Delta-\veczeta)=\emptyset$ holds for some 
$\veczeta\in\R^{n}$, then
\textup{$\ta_1\gg_d (\ell(\vecv)R-a_1)a_1^{\frac1{n-1}}$} in 
\textup{$\tM=\nn(\tu)\aa(\ta)\tkk\in G^{(n-1)}$.}
\end{lem}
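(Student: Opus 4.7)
The plan is to exploit the slab decomposition \eqref{LATTICECONTAINEMENT}, which confines $\Z^n M$ to the parallel hyperplanes $ka_1\vecv+\vecv^\perp$ ($k\in\Z$), and to apply Lemma \ref{LEM2} so as to extract a large empty ball inside one of the affine slices of $R\Delta-\veczeta$. The empty ball will force the covering radius of $\Z^{n-1}\tM$ (with respect to Euclidean balls) to be large, which under the Siegel-set hypothesis forces $\ta_1$ to be large.

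In more detail: set $\beta=\veczeta\cdot\vecv$, so that $P_\vecv(R\Delta-\veczeta)=\{x\vecv\col x\in[R\ell_-(\vecv)-\beta,\,R\ell_+(\vecv)-\beta]\}$ is an interval of length $R\ell(\vecv)\geq a_1$. A pigeonhole-near-the-midpoint argument produces $k\in\Z$ for which $ka_1$ lies in this interval at distance at least $\sfrac12(\ell(\vecv)R-a_1)$ from both endpoints. Applying Lemma \ref{LEM2} to the translated simplex $R\Delta-\veczeta$ at $x=ka_1$ then yields an $(n-1)$-dimensional ball $\scrB\subset (ka_1\vecv+\vecv^\perp)\cap(R\Delta-\veczeta)$ of radius $\gg_d \ell(\vecv)R-a_1$. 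The hypothesis $\Z^nM\cap(R\Delta-\veczeta)=\emptyset$ then forces the affine lattice $\Z^nM\cap(ka_1\vecv+\vecv^\perp)$ to avoid $\scrB$.

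By \eqref{LATTICEINPARAM} this affine lattice is a translate of the linear lattice $\Z^nM\cap\vecv^\perp$, which the isometry $f(\vecv)^{-1}\in\SO(n)$ identifies with $a_1^{-1/(n-1)}\Z^{n-1}\tM$ sitting inside $\{0\}\times\R^{n-1}$. Consequently the covering radius of the $(n-1)$-dimensional lattice $\Z^{n-1}\tM$ is $\gg_d a_1^{1/(n-1)}(\ell(\vecv)R-a_1)$. The Iwasawa splittings \eqref{NAKSPLITDEF} show at once that $M\in\Si_n$ implies $\tM\in\Si_{n-1}$, so by Jarnik's inequalities the covering radius of $\Z^{n-1}\tM$ is bounded above by a constant (depending only on $d$) times its last successive minimum $\lambda_{n-1}$, and in turn $\lambda_{n-1}\asymp_d\ta_1$. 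Combining the two estimates delivers $\ta_1\gg_d(\ell(\vecv)R-a_1)a_1^{1/(n-1)}$, as claimed.

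The only delicate point is the pigeonhole step: without any extra slack beyond $\ell(\vecv)R\geq a_1$, one must still place an integer multiple of $a_1$ with margin $\sfrac12(\ell(\vecv)R-a_1)$ from both endpoints of an interval of length $\ell(\vecv)R$. This reduces to the elementary observation that the closest integer multiple of $a_1$ to the midpoint of any interval is within $a_1/2$ of it, so the remaining distance to either endpoint is at least $\ell(\vecv)R/2 - a_1/2$.
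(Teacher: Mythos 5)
Your proposal is correct and follows the same geometric skeleton as the paper's proof: confine the lattice to the slabs $ka_1\vecv+\vecv^\perp$, place $ka_1$ near the midpoint of $P_\vecv(R\Delta-\veczeta)$ with margin $\sfrac12(\ell(\vecv)R-a_1)$, invoke Lemma~\ref{LEM2} to extract a large empty ball in the affine slice, and read off from \eqref{LATTICEINPARAM} that $\Z^{n-1}\tM$ avoids a ball of radius $\gg_d a_1^{1/(n-1)}(\ell(\vecv)R-a_1)$. The one place you diverge is the final step: the paper concludes directly by citing \cite[Lemma~2.1]{aS2010m} (the same lemma used for Lemma~\ref{LEM0}), whereas you unfold that step via the covering radius of $\Z^{n-1}\tM$, Jarnik's inequalities, and $\lambda_{n-1}(\Z^{n-1}\tM)\asymp_d\ta_1$ for $\tM\in\Si_{n-1}$. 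This is precisely the alternative route the paper itself flags right after Lemma~\ref{LEM0}, so it is a legitimate variant rather than a new idea; it has the small virtue of being self-contained (modulo the standard Siegel-set facts) at the modest cost of having to verify, as you do, that $M\in\Si_n$ implies $\tM\in\Si_{n-1}$ via the splitting \eqref{NAKSPLITDEF}. That verification is correct: the strict upper-triangular entries, the ratios $\ta_{j+1}/\ta_j=a_{j+2}/a_{j+1}$, and the $\SO(n-1)$-factor all inherit the Siegel-set constraints.
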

\begin{proof}
Set $X=\ell(\vecv)R-a_1\geq0$.
Since $P_\vecv(R\Delta-\veczeta)$ is a closed line segment 
in $\R\vecv$ of length $\ell(\vecv)R$,
there exists some $k\in\Z$ such that
$ka_1\vecv\in P_\vecv(R\Delta-\veczeta)$ and furthermore
such that $ka_1\vecv$ has distance $\geq\frac12 X$ to both the endpoints of
$P_\vecv(R\Delta-\veczeta)$.
Hence by Lemma \ref{LEM2},
$(ka_1\vecv+\vecv^\perp)\cap(R\Delta-\veczeta)$ contains an $(n-1)$-dimensional
ball $B$ of radius $\gg_d X$.
Now $\Z^nM\cap(R\Delta-\veczeta)=\emptyset$ implies that
the $(n-1)$-dimensional affine lattice
$(ka_1\vecv+\vecv^\perp)\cap\Z^{n}M$ must be disjoint from $B$.
In view of \eqref{LATTICEINPARAM} it follows that the
$(n-1)$-dimensional lattice $a_1^{-\frac1{n-1}}(0,\Z^{n-1}\tM)f(\vecv)
\subset\vecv^\perp$ is disjoint from 
a certain translate of $B$ inside $\vecv^\perp$.
Hence $\Z^{n-1}\tM$ is disjoint from a ball of radius
$\gg_d a_1^{\frac1{n-1}}X$ in $\R^{n-1}$, and so
$\ta_1\gg_d a_1^{\frac1{n-1}}X$ by 
\cite[Lemma 2.1]{aS2010m}.
\end{proof}

\subsection{The main computation}
\label{MAINCOMPSEC}

Recall that by Lemma \ref{LEM0}, if 
$M=\nn(u)\aa(a)\kk\in\Si_{n}$ satisfies
$\Z^{n}M\cap(R\Delta-\veczeta)=\emptyset$ for some
$\veczeta\in\R^{n}$, then $a_1\geq \kappa R$,
where $\kappa>0$ is a constant which only depends on $d$.
We set 
\begin{align}
A:=\kappa R,
\end{align}
and from now on we keep $R>\kappa^{-1}$, so that $A>1$.

We next recall some definitions and facts from 
\cite[Sec.\ 3.2]{jMaS2010b}.
We fix a subset $\S_\pm^{n-1}\subset\S_1^{n-1}\cap\{v_1\geq0\}$
which contains exactly one of the vectors $\vecv$ and $-\vecv$ for 
every $\vecv\in\S_1^{n-1}$.
Let us also fix a (set theoretical, measurable) fundamental region
$\F_{n-1}\subset\Si_{n-1}$ for $\Gamma^{(n-1)}\backslash G^{(n-1)}$.
We set
(cf.\ \cite[(3.15), (3.18)]{jMaS2010b})
\begin{align}\label{FGDEF}
&
\FG_A:=\Bigl\{[a_1,\vecv,\vecu,\tM]\in G \col a_1>A,\:
\vecv\in\S_\pm^{n-1},\:
\vecu\in(-\sfrac12,\sfrac12]^{n-1},\:
\tM\in\F_{n-1}\Bigr\}
\end{align}
and
\begin{align}\label{SIDPDEF}
\Si_n':=\Bigl\{[a_1,\vecv,\vecu,\tM]\in \Si_n\col\vecv\in\S_\pm^{n-1}\Bigr\}.
\end{align}

\begin{lem}\label{LEM3P4FROMLORENTZFOUR}
There exists a (set-theoretical, measurable) fundametal region
$\F_n\subset\Si_n'$ for $X_n=\Gamma\backslash G$ 
and a (measurable) subset $\FC\subset\Si_n'\cup\FG_A$, such that
\begin{align}
&\FG_A\setminus\FC
\:\:\subset\:\: \bigl\{M\in \F_n\col a_1>A\bigr\}
\:\:\subset\:\:\FG_A\cup\FC
\end{align}
and 
$\mu_n(\FC)\ll_d A^{-2n}$ if $n\geq3$, while $\FC=\emptyset$
if $n=2$.
\end{lem}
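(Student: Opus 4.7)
The plan is to adapt the construction in \cite[Lemma 3.4]{jMaS2010b}. For the first step, I would produce $\F_n$: since $\Si_n$ meets every $\Gamma$-orbit (and is contained in a finite union of fundamental regions, by \cite{aB69}), and since the extra restriction $\vecv \in \S_\pm^{n-1}$ merely kills the two-fold cover $\vecv \leftrightarrow -\vecv$, a standard Borel selection inside $\Si_n'$ yields a measurable fundamental region $\F_n \subset \Si_n'$ for $X_n = \Gamma\backslash G$.

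For the second step, I would compare $\{M \in \F_n : a_1 > A\}$ with $\FG_A$ in the split parametrization $M = [a_1,\vecv,\vecu,\tM]$. The Siegel conditions for $M \in \Si_n$ decompose via \eqref{NAKSPLITDEF} as follows: the requirement $u \in \F_N$ splits into $\vecu \in (-\sfrac12,\sfrac12]^{n-1}$ (the first row) together with $\tu \in \F_N^{(n-1)}$ (the lower block, automatic from $\tM \in \F_{n-1} \subset \Si_{n-1}$); the ratio conditions $a_{j+1} \leq \sfrac{2}{\sqrt 3}a_j$ for $j \geq 2$ reduce to $\ta_j \leq \sfrac{2}{\sqrt 3}\ta_{j-1}$, again automatic from $\tM \in \F_{n-1}$; and the reduction-theory argument of \cite{jMaS2010b} shows that for $a_1 > A$ large the parametrization is injective modulo $\Gamma$, since the sublattice of $\Z^n M$ transverse to $\vecv$ (having covolume $a_1^{-1}$) is uniquely characterized among hyperplane sublattices of small covolume. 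The \emph{only} Siegel condition not already implied by membership in $\FG_A$ is thus the single inequality $a_2 \leq \sfrac{2}{\sqrt 3}a_1$, which in our variables reads $\ta_1 \leq \sfrac{2}{\sqrt 3}a_1^{n/(n-1)}$.

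Accordingly I set $\FC$ to be the locus in $\FG_A$ where $\ta_1 > \sfrac{2}{\sqrt 3}a_1^{n/(n-1)}$, together with any residual boundary or uniqueness-failure set of no larger measure. The inclusions $\FG_A \setminus \FC \subset \{M \in \F_n : a_1 > A\} \subset \FG_A \cup \FC$ then hold by construction. For the measure bound, the disintegration \eqref{SLDRSPLITHAAR} together with the standard estimate $\mu_{n-1}(\{\tM \in \F_{n-1} : \ta_1 > B\}) \ll_d B^{-(n-1)}$ (valid for $n \geq 3$ and $B \geq 1$; cf.\ the proof of \cite[Lemma 2.4]{aS2010m}) gives
\begin{align*}
\mu_n(\FC) \ll_d \int_A^\infty \bigl(a_1^{n/(n-1)}\bigr)^{-(n-1)}\,\frac{da_1}{a_1^{n+1}} = \int_A^\infty a_1^{-2n-1}\,da_1 \asymp_d A^{-2n}.
\end{align*}
For $n = 2$, $G^{(1)}$ is trivial so $\ta_1 \equiv 1$, and the offending inequality becomes $a_1^{-2} \leq 2/\sqrt 3$, automatic once $A$ exceeds an absolute constant (arranged by imposing a lower bound on $R$); hence $\FC = \emptyset$ in this case.

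The main obstacle I anticipate is the injectivity portion of the second step: verifying rigorously that, outside a set of measure $\ll A^{-2n}$, no two distinct points of $\FG_A$ represent the same lattice in $X_n$. This is a reduction-theory statement requiring care about which hyperplane sublattices of $\Z^n M$ attain the smallest covolume, but it follows the pattern established in \cite{jMaS2010b}.
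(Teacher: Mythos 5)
Your approach mirrors the paper's: both ultimately rest on \cite[Lemma~3.4]{jMaS2010b} for $n\geq3$, and your measure computation via \eqref{SLDRSPLITHAAR} together with $\mu_{n-1}(\{\tM\in\F_{n-1}:\ta_1>B\})\ll_d B^{-(n-1)}$ reproduces the $A^{-2n}$ bound (the paper instead points to the computation \cite[(3.23),(3.24)]{jMaS2010b}, which is the same thing). You also correctly identify that the only Siegel condition not automatic from $\FG_A$-membership in the split variables is $\ta_1\leq\sfrac2{\sqrt3}a_1^{n/(n-1)}$.

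However, your deduction of the two inclusions is incomplete in exactly the way you flag at the end, and the gap is not cosmetic. Showing that $M\in\FG_A$ with $\ta_1\leq\sfrac2{\sqrt3}a_1^{n/(n-1)}$ lies in $\Si_n'$ does not place $M$ in a fundamental region $\F_n$: a Siegel set contains several representatives of a given $\Gamma$-orbit, so $\FG_A\setminus\FC\subset\Si_n'$ is much weaker than $\FG_A\setminus\FC\subset\F_n$. What \cite[Lemma~3.4]{jMaS2010b} supplies is precisely this missing step --- that $\F_n$ can be chosen so that, off a set of measure $\ll A^{-2n}$, points of $\FG_A$ are the distinguished representatives and conversely every $M\in\F_n$ with $a_1>A$ lies in $\FG_A$ up to such a set. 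Deferring this to the reference is legitimate, but then the proof collapses to the paper's one-line citation; the preceding unpacking of the Siegel condition, while correct, does not by itself yield the inclusions.

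For $n=2$ there is a more substantive issue: the lemma asserts $\FC=\emptyset$ exactly, not merely of small measure. You show only that the inequality $\ta_1\leq\sfrac2{\sqrt3}a_1^2$ is vacuous once $a_1>A>1$ (since $\ta_1\equiv1$), but the ``residual boundary or uniqueness-failure set'' you allow into $\FC$ is left unexamined. The paper handles $n=2$ separately and cleanly by writing $\F_2$ explicitly via the classical domain $\F_\HH$ and observing $\{M\in\F_2:a_1>A\}=\FG_A$ on the nose, because for $A>1$ the set $\{z\in\F_\HH:\Im z>A^2\}$ is the full horizontal strip $\{-\sfrac12<\Re z\leq\sfrac12,\ \Im z>A^2\}$; your general framework would need an analogous exact statement to get $\FC=\emptyset$, and \cite[Lemma~3.4]{jMaS2010b} (stated for $n\geq3$) does not obviously furnish it.
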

\begin{proof}
For $n\geq3$ this follows from
\cite[Lemma 3.4]{jMaS2010b},
together with the computation in 
\cite[(3.23), (3.24)]{jMaS2010b}.
In the remaining case $n=2$ we use the well-known fact that
a fundamental region for $X_2=\Gamma^{(2)}\backslash G^{(2)}$ 
is provided by
\begin{align}
\F_2:=\bigl\{\nn(u)\aa(a)f(\vecv)\in G^{(2)}
\col u+a_1^2i\in\F_\HH,\:\vecv\in\S_\pm^1\bigr\},
\end{align}
where $\F_\HH$ is the usual fundamental region for the action of
$\Gamma^{(2)}$ on the upper half-plane $\HH=\{z=x+iy\in\CC\col y>0\}$, viz.\ 
\begin{align}
\F_\HH:=\Bigl\{z=x+iy\in\HH\col -\sfrac12<x\leq\sfrac12,\:|z|\geq1,\:
(x<0\Rightarrow |z|>1)\Bigr\}.
\end{align}
In particular for this choice of $\F_2$ we have
$\F_2\subset\Si_2'$ and $\{M\in \F_2\col a_1>A\}=\FG_A$, since $A>1$.
\end{proof}

It follows from Lemma \ref{LEM3P4FROMLORENTZFOUR} and
\eqref{PSIFORMULAREWR} that
\begin{align}
\Psi_d(R)=\int_{\FG_A}I\Bigl(\exists\veczeta\in\R^{n}:\:
\Z^{n}M\cap(R\Delta-\veczeta)=\emptyset\Bigr)\,d\mu_n(M)
+O\bigl(\mu_n(\FC)\bigr),
\end{align}
where the error term is $\ll_d A^{-2n}\ll_d R^{-2n}$ if $n\geq3$,
while if $n=2$ then the error term vanishes.
Hence, using \eqref{FGDEF} and \eqref{SLDRSPLITHAAR}, we obtain
\begin{align}\notag
\Psi_d(R)=\frac1{\zeta(n)}\int_A^\infty\int_\HSm
\int_{(-\frac12,\frac12)^{n-1}}\int_{\F_{n-1}}
I\Bigl(\exists\veczeta\in\R^{n}:\:
\Z^{n}[a_1,\vecv,\vecu,\tM]\cap(R\Delta-\veczeta)=\emptyset\Bigr)
\\\label{PSIDASYMPTSTEP1}
\times d\mu_{n-1}(\tM)\,d\vecu\,d\vecv\,\frac{da_1}{a_1^{n+1}}
+O_d\bigl(I(n\geq3)\cdot R^{-2n}\bigr).
\end{align}
Here it follows from Lemma \ref{LEM1} that the integral is 
\begin{align}\label{LOWBOUND}
\geq\frac1{\zeta(n)}\int_{\HSm}\int_{\ell(\vecv)R}^\infty
\frac{da_1}{a_1^{n+1}}\,d\vecv
=\frac{R^{-n}}{n\zeta(n)}\int_{\HSm}\ell(\vecv)^{-n}\,d\vecv.
\end{align}
(Note here that by Lemma \ref{LEM1} and our definition of $A$
we have $A\leq \ell(\vecv)R$ for all $\vecv\in\S_1^{n-1}$.)
On the other hand it follows from Lemma \ref{LEM3} that 
there is a constant $\kappa'>0$ which only depends on $d$ such that
difference between the integral in \eqref{PSIDASYMPTSTEP1} and the
right hand side of \eqref{LOWBOUND} is
\begin{align}
\leq\frac1{\zeta(n)}\int_\HSm\int_A^{\ell(\vecv)R}
\mu_{n-1}\Bigl(\Bigl\{\tM %
\in\F_{n-1}\col
\ta_1\geq\kappa'(\ell(\vecv)R-a_1)a_1^{\frac1{n-1}}
\Bigr\}\Bigr)\,\frac{da_1}{a_1^{n+1}}\,d\vecv.
\end{align}
Here $A=\kappa R$; hence $R\ll_d a_1\ll_d R$ throughout the integral, 
and we get, with a new constant $\kappa''>0$ which only depends on $d$:
\begin{align}\notag
&\ll_d R^{-(n+1)}\int_\HSm\int_{\kappa R}^{\ell(\vecv)R}
\mu_{n-1}\Bigl(\Bigl\{\tM\in\F_{n-1}\col
\ta_1\geq\kappa''(\ell(\vecv)R-a_1)R^{\frac1{n-1}}
\Bigr\}\Bigr)\,da_1\,d\vecv
\\\notag
&\leq R^{-(n+1)}\int_\HSm\int_0^{\ell(\vecv) R}
\mu_{n-1}\Bigl(\Bigl\{\tM\in\F_{n-1}\col\ta_1\geq\kappa'' tR^{\frac1{n-1}}
\Bigr\}\Bigr)\,dt\,d\vecv.
\\\label{COMPUTATION}
&\ll_d R^{-(n+1)}\int_0^{\sqrt2 R}
\mu_{n-1}\Bigl(\Bigl\{\tM\in\F_{n-1}\col\ta_1\geq\kappa'' tR^{\frac1{n-1}}
\Bigr\}\Bigr)\,dt.
\end{align}
Now if $n\geq3$ then by a computation as in the proof of 
\cite[Lemma 2.4]{aS2010m}
we get
\begin{align}
\ll_d R^{-(n+1)}\int_0^{\sqrt2 R}
\bigl(1+tR^{\frac1{n-1}}\bigr)^{-(n-1)}\,dt
\ll_d R^{-n-1-\frac1{n-1}}.
\end{align}
On the other hand if $n=2$ then $\F_{n-1}=\{1\}$ and hence
the last line of \eqref{COMPUTATION}
equals $R^{-3}\cdot\min(\sqrt2 R,{\kappa''}^{-1}R^{-1})$, which is
$\ll R^{-4}$.  %
Hence we conclude:
\begin{align}\label{PSIDASYMPT1}
\Psi_d(R)=
\frac{R^{-n}}{n\zeta(n)}\int_{\HSm}\ell(\vecv)^{-n}\,d\vecv
+O_d\bigl(R^{-n-1-\frac1{n-1}}\bigr).
\end{align}
Now to prove the asymptotic formula for $\Psi_d(R)$
stated in Theorem \ref{PSIDASYMPTTHM},
it only remains to compute the integral
$\int_{\HSm}\ell(\vecv)^{-n}\,d\vecv$.

\subsection{Computing the constant in the main term}
\label{CONSTANTSEC}

\begin{lem}\label{CONSTANTFORMULALEM}
For every $n\geq2$ we have
\begin{align}
\int_{\HSm}\ell(\vecv)^{-n}\,d\vecv=\frac{n(n+1)}2.
\end{align}
\end{lem}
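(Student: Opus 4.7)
The plan is to convert the sphere integral into a Euclidean volume via polar coordinates, and then evaluate that volume by a sign decomposition.

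First I would extend $\ell$ to a positively $1$-homogeneous function on all of $\R^n$ by setting
\[
\tilde\ell(\vecx):=\max(0,x_1,\ldots,x_n)-\min(0,x_1,\ldots,x_n),
\]
which agrees with $\ell$ on $\S_1^{n-1}$. Since $\tilde\ell(-\vecv)=\tilde\ell(\vecv)$ (swap $\max$ with $-\min$), and $\HSm$ contains exactly one of $\pm\vecv$ for every $\vecv\in\S_1^{n-1}$, the integral over $\HSm$ is half the integral over the full sphere. Polar coordinates then give
\[
\int_{\S_1^{n-1}}\tilde\ell(\vecv)^{-n}\,d\vecv \;=\; n\cdot\vol(C),\qquad C:=\bigl\{\vecx\in\R^n\col\tilde\ell(\vecx)\le1\bigr\},
\]
so the lemma reduces to the volumetric identity $\vol(C)=n+1$.

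To evaluate $\vol(C)$, I would partition it by sign pattern: for each $S\subseteq\{1,\ldots,n\}$, let $C_S\subseteq C$ be the slice where $x_j\ge0$ for $j\in S$ and $x_j\le0$ otherwise. The substitution $y_j:=|x_j|$ turns the defining inequality into
\[
\max_{j\in S}y_j \;+\; \max_{j\notin S}y_j \;\le\; 1, \qquad y_j\ge0 \text{ for all }j,
\]
with the convention $\max\emptyset:=0$. Using the elementary identity $\vol\bigl(\{\vecy\in[0,\infty)^k\col\max_jy_j\le a\}\bigr)=a^k$ together with Fubini, a single beta-function integral gives $\vol(C_S)=1/\binom{n}{|S|}$ for each $S$ (the cases $|S|=0$ and $|S|=n$ being trivial unit cubes).

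Summing over subsets,
\[
\vol(C)=\sum_{k=0}^{n}\binom{n}{k}\cdot\binom{n}{k}^{-1}=n+1,
\]
which combined with the reduction above yields $\int_{\HSm}\ell(\vecv)^{-n}\,d\vecv=\tfrac12\,n(n+1)$, as desired. There is no genuine obstacle; the only conceptual step is recognizing the $1$-homogeneous extension $\tilde\ell$ as the right device for trading the sphere integral for a Euclidean volume, after which everything is a short elementary computation.
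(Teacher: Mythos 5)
Your argument is correct, and the first half — extending $\ell$ to the $1$-homogeneous $\tilde\ell(\vecx)=\max(0,x_1,\ldots,x_n)-\min(0,x_1,\ldots,x_n)$, using evenness to halve, and trading the sphere integral for the Euclidean volume of the sublevel set $C=\{\tilde\ell\le1\}$ via polar coordinates — coincides exactly with the paper's reduction (the paper writes $C$ as $K=\{r\vecv\col 0\le r\le\ell(\vecv)^{-1}\}$ and identifies it with $\{\vecx\in[-1,1]^n\col|x_j-x_k|\le1\;\forall j,k\}$, which is the same set). Where you diverge is in evaluating $\vol(C)$. The paper decomposes by which coordinate is extremal: aside from the two orthants $[0,1]^n$ and $[-1,0]^n$ (each contributing $1$), it groups the mixed-sign part into $n(n-1)$ congruent pieces according to which coordinate is the (negative) minimum and which is the (positive) maximum, reducing to the single iterated integral $\int_{-1}^0\int_0^{1+x_1}(x_2-x_1)^{n-2}\,dx_2\,dx_1=\tfrac1n$, so $\vol(C)=2+n(n-1)\cdot\tfrac1n=n+1$. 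You instead decompose over all $2^n$ sign orthants $C_S$; after the substitution $y_j=|x_j|$ each slice is governed by $\max_{j\in S}y_j+\max_{j\notin S}y_j\le1$, and a Beta-integral gives $\vol(C_S)=\binom{n}{|S|}^{-1}$, whence $\vol(C)=\sum_{k=0}^n\binom nk\binom nk^{-1}=n+1$. Both computations are short and elementary; yours has the charm of producing the clean combinatorial identity $\sum_k\binom nk^{-1}\binom nk=n+1$, whereas the paper's extremal-coordinate symmetry keeps the count of pieces polynomial ($2+n(n-1)$ rather than $2^n$) and requires just one explicit integral.
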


\begin{proof}
Set
\begin{align}
K=\bigl\{r\vecv\col\vecv\in\S_1^{n-1},\:
0\leq r\leq\ell(\vecv)^{-1}\bigr\}\subset\R^n;
\end{align}
then clearly
\begin{align}\label{CONSTANTFORMULALEMPF1}
\int_{\HSm}\ell(\vecv)^{-n}\,d\vecv=
\frac12\int_{\S_1^{n-1}}\ell(\vecv)^{-n}\,d\vecv=
\frac n2\vol(K).
\end{align}
But for any $\vecx=r\vecv$ with $r>0$ and $\vecv\in\S_1^{n-1}$ we have
\begin{align}
\ell(\vecv)
=\|\vecx\|^{-1}\bigl(\max(0,x_1,\ldots,x_n)-\min(0,x_1,\ldots,x_n)\bigr),
\end{align}
so that $r\leq\ell(\vecv)^{-1}$ holds if and only if
$\max(0,x_1,\ldots,x_n)-\min(0,x_1,\ldots,x_n)\leq1$.
In other words,
\begin{align}
K=\bigl\{\vecx\in[-1,1]^n\col |x_j-x_k|\leq1,\:\forall j,k\bigr\}.
\end{align}
Hence by easy symmetry considerations we have
\begin{align}\notag
\vol(K)=\vol\bigl(K\cap[0,1]^n\bigr)+\vol\bigl(K\cap[-1,0]^n\bigr)
\hspace{200pt}
\\\notag
+n(n-1)\vol\bigl(\bigl\{\vecx\in K\col
x_1<0<x_2\text{ and } x_1<x_j<x_2\text{ for }j=3,\ldots,n\bigr\}\bigr)
\\\label{CONSTANTFORMULALEMPF2}
=2+n(n-1)\int_{-1}^0\int_0^{1+x_1}(x_2-x_1)^{n-2}\,dx_2\,dx_1
=n+1.
\hspace{117pt}
\end{align}
The lemma follows from \eqref{CONSTANTFORMULALEMPF1} and
\eqref{CONSTANTFORMULALEMPF2}.
\end{proof}

\subsection{Bound from below}

Finally we will prove the lower bound 
\eqref{PSIDASYMPTTHMRES2}
in Theorem \ref{PSIDASYMPTTHM}.

The key step is the following lemma, which says that for ``good'' directions 
$\vecv=(v_1,\ldots,v_n)\in\S_1^{n-1}$,
we may weaken the restriction $a_1>\ell(\vecv)R$ in
Lemma \ref{LEM1} by a small but uniform amount,
and still be sure to have $\Z^nM\cap(R\Delta-\veczeta)=\emptyset$
for some $\veczeta\in\Z^n$.

\begin{lem}\label{BETTERRESTRLEM}
Let $c$ be a fixed number in the interval $(0,n^{-\frac12})$,
and set 
\begin{align}\label{BETTERRESTRLEMCP}
c'=%
(n-1)!^{\frac1{n-1}}c^{\frac n{n-1}}.
\end{align}
Then for any 
$R\geq(2c'\sqrt n)^{1-\frac1n}$ and any
$M=[a_1,\vecv,\vecu,\tM]$ with
$a_1>\ell(\vecv)R-c'R^{-\frac1{n-1}}$ and $v_j>c$ %
($\forall j$),
there exists $\veczeta\in\R^n$ such that
$\Z^nM\cap(R\Delta-\veczeta)=\emptyset$.
\end{lem}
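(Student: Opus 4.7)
My plan is to refine the ``fit in a strip'' argument of Lemma \ref{LEM1}: instead of requiring $R\Delta-\veczeta$ to fit entirely inside an open slab $(0,a_1)\vecv+\vecv^\perp$ between two consecutive lattice hyperplanes, I will allow it to protrude a little below $\vecv^\perp$, and then choose the $\vecv^\perp$-component of $\veczeta$ so that the resulting thin cross-section in $\vecv^\perp$ still avoids the induced $(n-1)$-dimensional sublattice of $\Z^nM$. Two geometric facts drive the argument: (i) when every $v_j>c$, the bottom vertex $\bn$ of $\Delta$ is a simple vertex, and the slice of $R\Delta$ just above $\bn$ is contained in an explicit small $(n-1)$-simplex whose volume is easy to estimate; (ii) for any measurable $S\subset\vecv^\perp$ and any lattice $\Lambda\subset\vecv^\perp$, the union $\bigcup_{\lambda\in\Lambda}(S+\lambda)$ has density $\leq\vol_{n-1}(S)/\operatorname{covol}(\Lambda)$ in $\vecv^\perp$, and so if this ratio is strictly less than $1$ a translate of $S$ can be chosen disjoint from $\Lambda$.

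Put $\varepsilon_0:=R\ell(\vecv)-a_1$; if $\varepsilon_0<0$ the conclusion follows from Lemma \ref{LEM1}, so I assume $0\leq\varepsilon_0<c'R^{-1/(n-1)}$. I will take $\veczeta=(\varepsilon_0+\delta)\vecv+\veczeta'$ with $\delta>0$ a small auxiliary parameter and $\veczeta'\in\vecv^\perp$ to be chosen last. Since $\vecv\in\S_1^{n-1}$ with every $v_j>0$ forces $\ell(\vecv)=\max_jv_j\geq n^{-1/2}$, the hypothesis $R\geq(2c'\sqrt n)^{1-1/n}$ implies $c'R^{-1/(n-1)}\leq R\ell(\vecv)/2$, which in turn gives $\varepsilon_0<a_1$ and hence $\varepsilon_0+\delta<a_1$ for all sufficiently small $\delta>0$. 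Consequently the $\vecv$-projection of $R\Delta-\veczeta$ equals $[-(\varepsilon_0+\delta),\,a_1-\delta]\subset(-a_1,a_1)$, and in view of \eqref{LATTICECONTAINEMENT} the only hyperplane of the family $\{ka_1\vecv+\vecv^\perp\}_{k\in\Z}$ that $R\Delta-\veczeta$ meets is $\vecv^\perp$ itself. It therefore suffices to arrange that the cross-section
\[
S:=(R\Delta-\veczeta)\cap\vecv^\perp
\]
is disjoint from the sublattice $\Lambda_0:=a_1^{-1/(n-1)}(0,\Z^{n-1}\tM)f(\vecv)\subset\vecv^\perp$ of $(n-1)$-covolume $a_1^{-1}$ read off from \eqref{LATTICEINPARAM}.

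As $\veczeta'$ ranges over $\vecv^\perp$ the set $S$ runs through all translates within $\vecv^\perp$ of the slice $\{\vecy\in R\Delta\col\vecv\cdot\vecy=\varepsilon_0+\delta\}$. This slice, being just above the simple vertex $\bn$, is contained in the $(n-1)$-simplex $T_\delta$ with vertices $((\varepsilon_0+\delta)/v_j)\vece_j$ ($j=1,\ldots,n$), namely the corresponding slice of the positive cone $\R_{\geq 0}^n$. A short determinant computation yields $\vol_{n-1}(T_\delta)=(\varepsilon_0+\delta)^{n-1}/\bigl((n-1)!\prod_jv_j\bigr)$, and combining this with $\prod_jv_j>c^n$, the defining identity $c'^{n-1}=(n-1)!c^n$, the bound $\varepsilon_0+\delta<c'R^{-1/(n-1)}$ (valid for $\delta$ small), and finally $a_1\leq R\ell(\vecv)<R$, one obtains the chain
\[
\vol_{n-1}(S)\leq\vol_{n-1}(T_\delta)<\Bigl(\frac{\varepsilon_0+\delta}{c'}\Bigr)^{n-1}<\frac{1}{R}<\frac{1}{a_1}=\operatorname{covol}(\Lambda_0).
\]
Input (ii) above now produces a $\veczeta'$ for which $S\cap\Lambda_0=\emptyset$; for this $\veczeta'$ the simplex $R\Delta-\veczeta$ contains no point of $\Z^nM$, as desired. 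The only genuinely quantitative step is the verification that $\varepsilon_0+\delta<a_1$, which is precisely where the factor $2\sqrt n$ in the hypothesis on $R$ enters (through $\ell(\vecv)\geq n^{-1/2}$); everything else reduces to the short volume estimate displayed above.
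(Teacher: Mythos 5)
Your proposal is correct and takes essentially the same approach as the paper: the key steps (fitting $R\Delta-\veczeta$ into a slab so that it meets only one lattice hyperplane, identifying the induced $(n-1)$-lattice of covolume $a_1^{-1}$ in $\vecv^\perp$, and comparing with the $(n-1)$-volume of the cross-section of the positive orthant near the simple vertex) all match. The only cosmetic difference is that the paper shifts by the fixed amount $c'R^{-1/(n-1)}$ and bounds the slice by enlarging $R\Delta$ to $\R_{\geq0}^n$, while you shift by $\varepsilon_0+\delta$ (with $\delta\to0^+$), which is a slightly tighter but equivalent parametrization.
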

\begin{proof}
Let $R$ and $M=[a_1,\vecv,\vecu,\tM]$ satisfy the given assymptions.
If $a_1>\ell(\vecv)R$ then the desired statement is in 
Lemma \ref{LEM1}; hence from now on we may assume $a_1\leq\ell(\vecv)R$.
We will choose
\begin{align}
\veczeta=c'R^{-\frac1{n-1}}\vecv+\vecw
\end{align}
for some $\vecw\in\vecv^\perp$ which will be fixed at the end of the proof.
Then for every $\vecx\in R\Delta-\veczeta$ we have
\begin{align}\label{BETTERRESTRLEMPF2}
\vecx\cdot\vecv\leq \ell_+(\vecv)R-\veczeta\cdot\vecv
=\ell(\vecv)R-c'R^{-\frac1{n-1}}
\end{align}
and
\begin{align}\label{BETTERRESTRLEMPF3}
\vecx\cdot\vecv\geq-\veczeta\cdot\vecv=-c'R^{-\frac1{n-1}}
\geq-\bigl(\ell(\vecv)R-c'R^{-\frac1{n-1}}\bigr),
\end{align}
where we used the assumption $R\geq(2c'\sqrt n)^{1-\frac1n}$ in the last step.
Using \eqref{BETTERRESTRLEMPF2}, \eqref{BETTERRESTRLEMPF3} and
$a_1>\ell(\vecv)R-c'R^{-\frac1{n-1}}$ we conclude that
\begin{align}\label{BETTERRESTRLEMPF1}
(R\Delta-\veczeta)\cap(ka_1\vecv+\vecv^\perp)=\emptyset,
\qquad\forall k\in\Z\setminus\{0\}.
\end{align}
Hence, using also \eqref{LATTICECONTAINEMENT}, it follows that
\begin{align}
(R\Delta-\veczeta)\cap\Z^nM
=(R\Delta-\veczeta)\cap L_{M,\vecv},
\end{align}
where $L_{M,\vecv}$ is the $(n-1)$-dimensional lattice
$L_{M,\vecv}=\Z^nM\cap\vecv^\perp$.
Recall that $L_{M,\vecv}$ has covolume $a_1^{-1}$ in $\vecv^\perp$.
Using also $R\Delta\subset\R_{\geq0}^n$ and 
$\veczeta=c'R^{-\frac1{n-1}}\vecv+\vecw$, $\vecw\in\vecv^\perp$,
we obtain
\begin{align}\notag
(R\Delta-\veczeta)\cap\Z^nM
&\subset
(\R_{\geq0}^n-c'R^{-\frac1{n-1}}\vecv-\vecw)\cap L_{M,\vecv}
\\\label{BETTERRESTRLEMPF4}
&=\bigl(
((\R_{\geq0}^n-c'R^{-\frac1{n-1}}\vecv)\cap\vecv^\perp)
-\vecw\bigr)\cap L_{M,\vecv}.
\end{align}
Here $(\R_{\geq0}^n-c'R^{-\frac1{n-1}}\vecv)\cap\vecv^\perp$ is a
closed $(n-1)$-dimensional simplex, and a simple computation 
yields for its volume
(cf.\ \cite[(17)]{lFsR2007}, or the simpler computation in 
\cite[Lemma 1]{vA2006}):
\begin{align}
\vol_{n-1}\Bigl((\R_{\geq0}^n-c'R^{-\frac1{n-1}}\vecv)\cap\vecv^\perp\Bigr)
=\frac{\prod_{j=1}^n v_j^{-1}}{(n-1)!}
\bigl(c'R^{-\frac1{n-1}}\bigr)^{n-1}
<R^{-1}.
\end{align}
Here in the last step we used $v_j>c$ ($\forall j$) and 
\eqref{BETTERRESTRLEMCP}.
However the covolume of $L_{M,\vecv}$ in $\vecv^\perp$ is,
since we assumed $a_1\leq\ell(\vecv)R$ from start,
\begin{align}
\vol_{n-1}\bigl(\vecv^\perp/L_{M,\vecv}\bigr)=a_1^{-1}
\geq (\ell(\vecv)R)^{-1}>R^{-1}.
\end{align}
(Indeed $\ell(\vecv)=\ell_+(\vecv)<1$ since all $v_j$ are positive.)
The above shows that the volume of 
$(\R_{\geq0}^n-c'R^{-\frac1{n-1}}\vecv)\cap\vecv^\perp$ is
smaller than the covolume of $L_{M,\vecv}$, and hence 
there is some $\vecw\in\vecv^\perp$ such that the intersection in
\eqref{BETTERRESTRLEMPF4} is empty.
\end{proof}

We now return to the computation in Section \ref{MAINCOMPSEC}.
We will bound the
difference between the integral in \eqref{PSIDASYMPTSTEP1} and the
right hand side of \eqref{LOWBOUND} from \textit{below.}
Fix a constant $c\in(0,n^{-\frac12})$ as in Lemma~\ref{BETTERRESTRLEM},
let $c'>0$ be as in \eqref{BETTERRESTRLEMCP},
and let $\Omega$ be the nonempty, relatively open subset of $\HSm$ 
consisting of
all $\vecv=(v_1,\ldots,v_n)\in\S_1^{n-1}$ with $v_j>c$ ($\forall j$).
It now follows from Lemma~\ref{BETTERRESTRLEM} that,
for any $R\geq(2c'\sqrt n)^{1-\frac1n}$, the
difference between the integral in \eqref{PSIDASYMPTSTEP1} and the
right hand side of \eqref{LOWBOUND} is
\begin{align}
\geq\frac1{\zeta(n)}\int_{\ell(\vecv)R-c'R^{-\frac1{n-1}}}^{\ell(\vecv)R}
\int_\Omega d\vecv\,\frac{da_1}{a_1^{n+1}}
\gg_d R^{-n-1-\frac1{n-1}}.
\end{align}
In particular note that this contribution is asymptotically larger
than the error term %
in \eqref{PSIDASYMPTSTEP1}.
Hence we conclude that there exist constants $c,c'>0$ which only depend on $n$
such that for all $R>c'$,
\begin{align}
\Psi_d(R)>\frac{R^{-n}}{n\zeta(n)}\int_{\HSm}\ell(\vecv)^{-n}\,d\vecv
+cR^{-n-1-\frac1{n-1}}.
\end{align}
In view of Lemma \ref{CONSTANTFORMULALEM} we have thus proved 
\eqref{PSIDASYMPTTHMRES2} in Theorem \ref{PSIDASYMPTTHM}.
Since the asymptotic relation \eqref{PSIDASYMPTTHMRES} 
follows from \eqref{PSIDASYMPT1} and 
Lemma \ref{CONSTANTFORMULALEM}, this concludes the proof of 
Theorem \ref{PSIDASYMPTTHM}.
\hfill$\square\square\square$

\section{\texorpdfstring{Uniform bounds on $\widetilde P_d(T,R)$ and $P_d(T,R)$}{Uniform bounds on tPd(T,R) and Pd(T,R)}}
\label{UNIFSEC}

In this section we will prove Theorem \ref{UNIFTHM} and
Corollary \ref{UNIFTHMCOR}.

Let us first note that the claim \eqref{UNIFTHMRES2} in Theorem \ref{UNIFTHM},
i.e.\ 
\begin{align}\label{UNIFTHMRES2rep}
\widetilde P_d(T,R)=0\qquad\text{whenever }\: R\geq
\kappa_\scrD^{1-\frac1{d-1}}T^{1-\frac1{d-1}}
\end{align}
where
\begin{align}\label{BDDEF}
\kappa_\scrD:=\sup_{\vecx\in\scrD}\|\vecx\|,
\end{align}
is a direct consequence of any among several known bounds on the
Frobenius number %
(cf., e.g., \cite{aR2005}).
For example, the classical bound by Schur
(cf.\ \cite{aB42a})
asserts that for any $\veca\in\widehat\N^d$ satisfying 
$a_1\leq a_2\leq\cdots\leq a_d$,
\begin{align}\label{SCHURBOUND}
g(\veca)\leq a_1a_d-a_1-a_d
\qquad(\text{thus}\:
f(\veca)\leq a_1a_d+a_2+\ldots+a_{d-1}<da_1a_d).  %
\end{align}
Using this together with the fact that 
$s(\veca)\geq da_1a_d\|\veca\|^{-1+1/(d-1)}$
for any such $\veca$, we deduce
\begin{align}\label{FSINEQ}
\frac{f(\veca)}{s(\veca)}<\|\veca\|^{1-\frac1{d-1}}.
\end{align}
Here both the left and the right hand sides are invariant
under permutations of the coefficients of $\veca$;
hence \eqref{FSINEQ} in fact holds for \textit{all} $\veca\in\widehat\N^d.$
Finally, \eqref{UNIFTHMRES2rep} follows from \eqref{FSINEQ}.

We next turn to the proof of \eqref{UNIFTHMRES1} in Theorem \ref{UNIFTHM}.
As in the previous section we write $n=d-1$.
Given $\veca\in\widehat\N^d$ we set
\begin{align}
\Lambda_\veca=\Z^d\cap\veca^\perp
=\bigl\{\vecx\in\Z^d\col\veca\cdot\vecx=0\bigr\}.
\end{align}
This is an $n$-dimensional sublattice of $\Z^d$ of determinant 
$\det(\Lambda_\veca)=\|\veca\|$.
(By the determinant, $\det\Lambda$, of a lattice $\Lambda$ of not necessarily
full rank in $\R^d$, we mean the covolume of $\Lambda$ in
$\spn_\R\Lambda$.)
Given any $n$-dimensional %
lattice $\Lambda\subset\R^d$
we write $0<\lambda_1(\Lambda)\leq\cdots\leq\lambda_n(\Lambda)$ for the
Minkowski successive minima of $\Lambda$, i.e.
\begin{align}\label{SUCCMINDEF}
\lambda_j(\Lambda)=\inf\bigl\{r>0\col\dim\spn_\R(\scrB_r^d\cap\Lambda)\geq j\}.
\end{align}
(Recall that $\scrB_r^d$ is the closed $d$-dimensional ball
of radius $r$ centered at $\bn$.)
Then by Aliev and Henk
\cite[(14)]{iAmH2008}\footnote{Note
that ``$\lambda_j$'' in 
\cite{iAmH2008} equals $\|\veca\|^{-\frac1n}\lambda_j(\Lambda_\veca)$ in
our notation.}
(cf.\ also Kannan \cite[Thm.\ 2.5]{rK92})
we have
\begin{align}
\frac{f(\veca)}{s(\veca)}\leq\sfrac12 n\|\veca\|^{-\frac1n}
\lambda_n(\Lambda_\veca).
\end{align}
Note also that we have
$\#(\widehat\N^d\cap T\scrD)\asymp_{d,\scrD}T^d$ uniformly
over all $T>0$ for which $\widehat\N^d\cap T\scrD\neq\emptyset$,
since $\scrD$ is bounded with nonempty interior.
Using these facts together with the fact that
$\Lambda_\veca\neq\Lambda_\vecb$ for all
$\veca\neq\vecb\in\widehat\N^d$ 
(since 
$\spn_\R\Lambda_\veca=\veca^\perp\neq\vecb^\perp=\spn_\R\Lambda_\vecb$),
it follows that
\begin{align}\label{UNIFTHMPF1}
\widetilde P_d(T,R)\ll_{d,\scrD} T^{-d}
\#\Bigl\{\Lambda\in \scrL_n\col\det(\Lambda)\leq \kappa_\scrD T,
\:\lambda_n(\Lambda)>2n^{-1}\det(\Lambda)^{1/n} R\Bigr\},
\end{align}
where $\scrL_n$ is the set of all $n$-dimensional sublattices of 
$\Z^d$.  %

Let us set
\begin{align}
\rho_j(\Lambda):=\lambda_{j+1}(\Lambda)/\lambda_j(\Lambda)
\qquad\text{for }\: j=1,\ldots,n-1.
\end{align}
(Thus $\rho_j(\Lambda)\geq1$ for all $\Lambda$.)
Also, for any $\vecr=(r_1,\ldots,r_{n-1})\in\R_{\geq1}^{n-1}$, we set
\begin{align}
\scrL_n(\vecr):=\Bigl\{\Lambda\in\scrL_n\col
\rho_j(\Lambda)\geq r_j\:(\forall j) %
\Bigr\}.
\end{align}
Now as a special case of Schmidt's
\cite[Thm.\ 5]{wS98},
the number of lattices in $\scrL_n(\vecr)$ with determinant at most $T$
is given by the following asymptotic formula with a precise error term.
Let us write $\rho_j(L)=\lambda_{j+1}(L)/\lambda_j(L)$
also for an $n$-dimensional lattice $L\subset\R^n$,
with $\lambda_1(L)\leq\cdots\leq\lambda_n(L)$ being the
successive minima of $L$.
\begin{thm}\label{SCHMIDTTHM}
(\cite[Thm.\ 5]{wS98})
For any $\vecr\in\R_{\geq1}^{n-1}$ and $T>0$ we have
\begin{align}\notag
\#\Bigl\{\Lambda\in \scrL_n(\vecr)\col\det(\Lambda)\leq T\Bigr\}
=\frac{\pi^{\frac d2}}{2\Gamma(1+\frac d2)}
\Bigl(\prod_{j=2}^n\zeta(j)\Bigr)
\mu_n\bigl(\bigl\{L\in X_n\col\rho_j(L)\geq r_j\:(\forall j)\bigr\}\bigr)
\cdot T^d
\\
+O_d\biggl(\Bigl(\prod_{j=1}^{n-1}r_j^{-(j-\frac1n)(n-j)}\Bigr)
T^{d-\frac1n}\biggr).
\end{align}
Furthermore,
\begin{align}
\mu_n\bigl(\bigl\{L\in X_n\col\rho_j(L)\geq r_j\:(\forall j)\bigr\}\bigr)
\asymp_d\prod_{j=1}^{n-1}r_j^{-j(n-j)}.
\end{align}
\end{thm}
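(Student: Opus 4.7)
The plan is to split the theorem into two parts. The second assertion (the measure estimate on $X_n$) reduces to a direct computation in the Iwasawa coordinates set up in Section~\ref{PSIDASYMPTSEC}. The first assertion (the counting asymptotic with the $\vecr$-uniform error term) is Schmidt's Theorem~5 of \cite{wS98}; the quantitative heart is an effective equidistribution statement for shapes of sublattices, and I would ultimately invoke Schmidt's argument, but let me sketch the strategy.

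For the counting, I would first parametrize: each $\Lambda\in\scrL_n$ has a well-defined shape $[L_\Lambda]\in X_n$, namely the image in $\Gamma\backslash G$ of $\det(\Lambda)^{-1/n}\Lambda$ under any isometric identification $\spn_\R\Lambda\cong\R^n$. The ratios $\rho_j$ depend only on the shape, so $\Lambda\in\scrL_n(\vecr)$ is a condition on $[L_\Lambda]\in E(\vecr):=\{L\in X_n\col\rho_j(L)\geq r_j\ \forall j\}$. The unrestricted count
\begin{align*}
\#\{\Lambda\in\scrL_n\col\det(\Lambda)\leq T\}=\frac{\pi^{d/2}}{2\Gamma(1+\tfrac{d}{2})}\prod_{j=2}^n\zeta(j)\cdot T^d+O_d(T^{d-1/n})
\end{align*}
is classical: it is extractable from the Dirichlet series $\sum_\Lambda\det(\Lambda)^{-s}$ using the decomposition of $\scrL_n$ as primitive rational $n$-planes $V\subset\R^d$ together with sublattices of $\Z^d\cap V$. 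To impose the shape constraint, one runs the same decomposition but weights each $V$ by the count of sublattices of $\Z^d\cap V$ of bounded index with shape in $E(\vecr)$. An effective lattice-point estimate for the inner count, combined with equidistribution of primitive $n$-planes on the Grassmannian, yields the main term $c_{n,d}\mu_n(E(\vecr))T^d$.

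The main obstacle is the $\vecr$-uniform error term with exponent $(j-\tfrac{1}{n})(n-j)$. Heuristically, the unrestricted error $T^{d-1/n}$ comes from a surface-type contribution in the lattice-point count, and when restricted to $E(\vecr)$ the effective ``diameter'' of the region that contributes scales like $\prod_j r_j^{-(j-1/n)(n-j)}$ rather than the bulk $\prod_j r_j^{-j(n-j)}$, because the $T^{-1/n}$ saving is distributed across the cusp directions of $X_n$. Making this precise requires the careful reduction-theoretic estimates of \cite{wS98}, and I would invoke Schmidt's argument wholesale at this point; this is the hard part.

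For the second assertion I would work entirely on the Siegel set from Section~\ref{PSIDASYMPTSEC}. In the coordinates $\alpha_k:=a_{k+1}/a_k$, $k=1,\ldots,n-1$, one has $\alpha_k\in(0,2/\sqrt{3}]$ on $\Si_n$, and the Haar measure pulls back (up to constants and up to bounded factors for $u$, $\kk$) to $\prod_{k=1}^{n-1}\alpha_k^{k(n-k)-1}\,d\alpha_k$, as follows by a standard computation of the modular function of $AN$. Jarn\'ik's inequalities give $\lambda_j(\Z^nM)\asymp_d a_{n+1-j}$ for $M\in\Si_n$, so $\rho_j\asymp_d\alpha_{n-j}^{-1}$, and the condition $\rho_j\geq r_j$ becomes $\alpha_{n-j}\leq c_dr_j^{-1}$. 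Integrating yields
\begin{align*}
\mu_n(E(\vecr))\asymp_d\prod_{j=1}^{n-1}\int_0^{\min(c_dr_j^{-1},\,2/\sqrt{3})}\alpha^{j(n-j)-1}\,d\alpha\asymp_d\prod_{j=1}^{n-1}r_j^{-j(n-j)},
\end{align*}
which is the stated estimate (the bounded case $r_j\asymp 1$ being absorbed in the $\asymp_d$). As a sanity check, one recovers $\mu_2(\rho_1\geq r)\asymp r^{-1}$, which matches the well-known cusp volume in $\SL(2,\Z)\backslash\SL(2,\R)$.
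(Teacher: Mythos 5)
The paper gives no proof of this theorem; it is stated as a direct citation of Schmidt's Theorem~5 in \cite{wS98}, so there is no internal argument to compare against. Your proposal correctly recognizes this: for the counting asymptotic with the $\vecr$-uniform error exponent $(j-\tfrac1n)(n-j)$, you defer to Schmidt's reduction-theoretic argument, which is exactly what the paper does. What you add is a self-contained Siegel-set derivation of the second assertion $\mu_n(E(\vecr))\asymp_d\prod_j r_j^{-j(n-j)}$, and that derivation is correct: with $\alpha_k=a_{k+1}/a_k$, the Haar measure on $NAK$ is $\delta_P(a)^{-1}\,dn\,d^\times a\,dk$ with $\delta_P(a)=\prod_{i<j}a_i/a_j=\prod_k\alpha_k^{-k(n-k)}$, so $\delta_P(a)^{-1}d^\times a\asymp\prod_k\alpha_k^{k(n-k)-1}d\alpha_k$; Jarn\'ik gives $\lambda_j(\Z^nM)\asymp_d a_{n+1-j}$, hence $\rho_j\asymp_d\alpha_{n-j}^{-1}$, and the integral $\int_0^{\min(c_dr_j^{-1},2/\sqrt3)}\alpha^{j(n-j)-1}\,d\alpha\asymp r_j^{-j(n-j)}$ (using $r_j\geq1$) closes the computation. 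One cosmetic caveat worth stating explicitly: the two-sided comparison $\rho_j\asymp\alpha_{n-j}^{-1}$ means that when you sandwich $E(\vecr)$ between the conditions $\alpha_{n-j}\leq c_1 r_j^{-1}$ and $\alpha_{n-j}\leq c_2r_j^{-1}$ you use \emph{different} constants for the upper and lower bound, and also that the Siegel set covers a fundamental domain with bounded multiplicity; both are harmless since you only claim $\asymp_d$, but a fully careful write-up should say so. As for the sketch of the counting part (shape equidistribution, Dirichlet series over primitive $n$-planes, distributing the $T^{-1/n}$ saving across cusp directions), this is a plausible high-level gloss on Schmidt's argument, but it is genuinely heuristic at the decisive step --- the $\vecr$-uniform error exponent --- so deferring to \cite[Thm.~5]{wS98} at that point, rather than claiming a proof, is the right call.
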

For our argument we will only make use of the upper bound which follows 
from the above theorem, viz.\ 
\begin{align}\label{SCHMIDTTHMRES2}
\#\Bigl\{\Lambda\in \scrL_n(\vecr)\col\det(\Lambda)\leq T\Bigr\}
\ll_d T^{d}\prod_{j=1}^{n-1}r_j^{-j(n-j)}
\biggl(1+T^{-\frac1n}\prod_{j=1}^{n-1}r_j^{\frac1n(n-j)}\biggr).
\end{align}

We will now form a finite %
union of sets $\scrL_n(\vecr)$
which contains the set in the right hand side of \eqref{UNIFTHMPF1}.

For any $n$-dimensional lattice $\Lambda$ we have
\begin{align}
\lambda_n(\Lambda)^n=\prod_{j=1}^n\lambda_j(\Lambda)
\prod_{j=1}^{n-1}\rho_j(\Lambda)^{j}
\asymp_d\det(\Lambda)\prod_{j=1}^{n-1}\rho_j(\Lambda)^{j},
\end{align}
where in the last step we used Minkowski's Second Theorem 
(cf., e.g., \cite[Lectures 3-4]{cS89}).
Hence there exists a constant $c>0$ which only depends 
on $n$ (viz., only on $d$) such that for any 
$n$-dimensional lattice $\Lambda$ and any $R>0$, we have
\begin{align}\label{UNIFTHMPF2}
\lambda_n(\Lambda)>2n^{-1}\det(\Lambda)^{1/n}R
\:\Longrightarrow\:
\prod_{j=1}^{n-1}\rho_j(\Lambda)^j>cR^n.
\end{align}
Note that \eqref{UNIFTHMRES1} is trivial
when $R\ll1$ (since $\widetilde P_d(T,R)\leq1$ always); hence from now on
we may keep $R\geq ec^{-\frac1n}$ without loss of generality.
Set
\begin{align}
B:=\lfloor \log(cR^n)-n\rfloor\in\Z_{\geq0},
\end{align}
and
\begin{align}
\scrR(n,R):=\Bigl\{\vecr=\bigl(e^{b_1},e^{b_2/2},e^{b_3/3},
\ldots,e^{b_{n-1}/(n-1)}\bigr)\col\vecb\in\Z_{\geq0}^{n-1},\:
\sum_{j=1}^{n-1} b_j %
=B  %
\Bigr\}.
\end{align}
Note that if $\Lambda$ is any $n$-dimensional lattice satisfying
$\prod_{j=1}^{n-1}\rho_j(\Lambda)^j>cR^n$, then if we set
$b_j:=\lfloor j\log\rho_j(\Lambda)\rfloor$ we have
\begin{align}
\sum_{j=1}^{n-1}b_j>\sum_{j=1}^{n-1}(j\log\rho_j(\Lambda)-1)
>\log(cR^n)-(n-1)>\log(cR^n)-n\geq B. %
\end{align}
Hence there is a way to decrease some of the $b_j$'s
so as to make $\sum_{j=1}^{n-1}b_j=B$, %
while keeping $\vecb=(b_1,\ldots,b_{n-1})\in\Z_{\geq0}^{n-1}$.
Of course the new vector $\vecb=(b_1,\ldots,b_{n-1})$ still satisfies
$b_j\leq j\log\rho_j(\Lambda)$ for each $j$,
i.e.\ $\rho_j(\Lambda)\geq e^{b_j/j}$.
We have thus proved that for %
any $n$-dimensional lattice $\Lambda$ satisfying
$\prod_{j=1}^{n-1}\rho_j(\Lambda)^j>cR^n$, there exists some
$\vecr\in\scrR(n,R)$ such that
$r_j\leq\rho_j(\Lambda)$ for $j=1,\ldots,n-1$.
This fact together with \eqref{UNIFTHMPF2} imply that
the set in the right hand side of \eqref{UNIFTHMPF1} is contained
in the union of $\scrL_n(\vecr)$ over all $\vecr\in\scrR(n,R)$.
Hence, by \eqref{UNIFTHMPF1}, we have for all $T>0$ with 
$\widehat\N^d\cap T\scrD\neq\emptyset$ 
and all $R\geq ec^{-\frac1n}$,
\begin{align}
\widetilde P_d(T,R)\ll_{d,\scrD} T^{-d}\sum_{\vecr\in\scrR(n,R)}
\#\Bigl\{\Lambda\in\scrL_n(\vecr)\col\det(\Lambda)\leq \kappa_\scrD T\Bigr\}.
\end{align}
Hence, via \eqref{SCHMIDTTHMRES2}, 
\begin{align}\notag
\widetilde P_d(T,R)
\ll_{d,\scrD}
\sum_{\substack{\vecb\in\Z_{\geq0}^{n-1}\\ b_1+\ldots+b_{n-1}=B}}
\exp\Bigl\{-\sum_{j=1}^{n-1}(n-j)b_j\Bigr\}
\hspace{150pt}
\\
+T^{-\frac1n}
\sum_{\substack{\vecb\in\Z_{\geq0}^{n-1}\\ b_1+\ldots+b_{n-1}=B}}
\exp\Bigl\{-\sum_{j=1}^{n-1}(1-(nj)^{-1})(n-j)b_j\Bigr\}.
\end{align}

If $n=2$ then each sum above has exactly one term, and we conclude
\begin{align}
\widetilde P_3(T,R)\ll_{\scrD} R^{-2}+T^{-\frac12}R^{-1}.
\end{align}
If $R<\kappa_\scrD^{\frac12}T^{\frac12}$ then this gives
$\widetilde P_3(T,R)\ll_{\scrD} R^{-2}$.
On the other hand if $R\geq\kappa_\scrD^{\frac12}T^{\frac12}$ then 
$\widetilde P_3(T,R)=0$ by \eqref{UNIFTHMRES2rep}.
Hence the proof of \eqref{UNIFTHMRES1} is complete in the case $n=2$.

We now assume $n\geq3$. 
We set
\begin{align}
\gamma_1(j):=n-j\quad\text{and}\quad
\gamma_2(j)=(1-(nj)^{-1})(n-j)=n+n^{-1}-(j+j^{-1}).
\end{align}
Now for any $\vecb\in\Z_{\geq0}^{n-1}$ with $b_1+\ldots+b_{n-1}=B$
and $b_1+\ldots+b_{n-2}=:s$ we have,
since $\gamma_1(j)$ is a decreasing function of $j$,
\begin{align}
\sum_{j=1}^{n-1}\gamma_1(j)b_j\geq
\gamma_1(n-2)\sum_{j=1}^{n-2}b_j+\gamma_1(n-1)b_{n-1}
=2s+(B-s)=B+s.
\end{align}
Similarly, since also $\gamma_2(j)$ is a decreasing function of $j$
for $j\geq1$,
\begin{align}\notag
\sum_{j=1}^{n-1}\gamma_2(j)b_j
&\geq\gamma_2(n-2)s+\gamma_2(n-1)(B-s)
\\
&=\Bigl(1-\frac1{n(n-1)}\Bigr)B+\Bigl(1-\frac1{(n-1)(n-2)}\Bigr)s.
\end{align}
Note also that for any $s\in\{0,1,\ldots,B\}$ there are exactly
$\binom{s+n-3}{n-3}$ vectors $\vecb\in\Z_{\geq0}^{n-1}$ satisfying
$b_1+\ldots+b_{n-1}=B$ and $b_1+\ldots+b_{n-2}=s$.
Hence
\begin{align}\notag
\widetilde P_d(T,R)
\ll_{d,\scrD}\sum_{s=0}^B\binom{s+n-3}{n-3}e^{-B-s}
+T^{-\frac1n}\sum_{s=0}^B\binom{s+n-3}{n-3}e^{-(1-\frac1{n(n-1)})B
-(1-\frac1{(n-1)(n-2)})s}
\\
\ll_{d,\scrD} e^{-B}+T^{-\frac1n}e^{-(1-\frac1{n(n-1)})B}
\ll_d R^{-n}\bigl(1+T^{-\frac1n}R^{\frac1{n-1}}\bigr).
\end{align}
If $R<\kappa_\scrD^{1-\frac1n}T^{1-\frac1n}$ then this gives
$\widetilde P_d(T,R)\ll_{d,\scrD} R^{-n}$.
On the other hand if $R\geq\kappa_\scrD^{1-\frac1n}T^{1-\frac1n}$ then 
$\widetilde P_d(T,R)=0$ by \eqref{UNIFTHMRES2rep}.
Hence the proof of \eqref{UNIFTHMRES1} is complete.
\hfill $\square$ $\square$ $\square$

\begin{remark}
Note that our proof makes crucial use of the precise error terms which
Schmidt has worked out for the asymptotic formulas in
\cite[Sec.\ 2]{wS98}.
In this vein, note that the proof of the bound
$\widetilde P_d(T,R)\ll_d R^{-2}$
in \cite[Thm.\ 1.1]{iAmH2008}
is correct as it stands only when $T$ 
is sufficiently large in a way which may depend on $R$ (as well as $d$);
this is because the proof in \cite{iAmH2008} uses Schmidt's
\cite[Thm.\ 2]{wS98}
in which the rate of convergence may depend in an unspecified way on the
chosen set ``$\mathscr{D}$'' of lattice similarity classes.

\end{remark}

\subsection{Proof of Corollary \ref*{UNIFTHMCOR}}

Let us first note that \eqref{UNIFTHMCORRES2} is again a
direct consequence of the classical bound by
Schur, \eqref{SCHURBOUND}. Indeed, for any 
$\veca\in\widehat\N^d$ satisfying 
$a_1\leq a_2\leq\cdots\leq a_d$ we have by \eqref{SCHURBOUND}:
\begin{align}
\frac{f(\veca)}{(a_1\cdots a_d)^{\frac1{d-1}}}
<d\cdot\frac{a_1}{(a_1\cdots a_{d-1})^{\frac1{d-1}}}\cdot a_d^{1-\frac1{d-1}}
\leq da_d^{1-\frac1{d-1}}
<d\|\veca\|^{1-\frac1{d-1}},
\end{align}
and this implies \eqref{UNIFTHMCORRES2}.

The following lemma refines 
\cite[Thm.\ 2 and Remark 1]{iAmHaH2011}.
Recall that $n=d-1\geq2$.
Let us write $\|\vecx\|_\infty:=\max(|x_1|,\ldots,|x_n|)$ for the
maximum norm of a vector $\vecx\in\R^n$.

\begin{lem}\label{AGMLEMMA}
For any $T>0$ and $\alpha>0$ we have
\begin{align}\notag
\#\biggl\{\vecx=(x_1,\ldots,x_n)\in\N^n\col \|\vecx\|_\infty \leq T,\:
\frac{\|\vecx\|_\infty}{(x_1\cdots x_n)^{1/n}}>\alpha\biggr\}
\ll_n T^n\alpha^{-n}(\log(2+\alpha))^{n-2}.
\end{align}
\end{lem}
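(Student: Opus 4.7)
By the permutation symmetry of the problem, the count in the statement is at most $n$ times the number of vectors $\vecx \in \N^n$ satisfying $x_n \geq x_i$ for all $i$, $x_n \leq T$, and $x_1 \cdots x_{n-1} < x_n^{n-1}/\alpha^n$. For $\alpha \leq 2$ the claim is trivial since the LHS is at most $T^n$ and the RHS is $\gg_n T^n$; so I may assume $\alpha > 2$. For each fixed value $M = x_n \in \N$, the last inequality has no solutions in $\N^{n-1}$ unless $M > \alpha^{n/(n-1)}$, so only such $M$ contribute.

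The key ingredient is the following refined divisor bound: for every integer $k \geq 1$ and every $M \geq 1$, $K \geq 1$ with $K \leq M^k$,
\begin{equation*}
\Phi_k(M, K) \: := \: \#\bigl\{(x_1, \ldots, x_k) \in \N^k \col x_i \leq M \:(\forall i),\: x_1\cdots x_k \leq K\bigr\} \ll_k K\bigl(\log(eM^k/K)\bigr)^{k-1}.
\end{equation*}
I would prove this by induction on $k$. The base $k = 1$ is immediate since $\Phi_1(M, K) = \lfloor K \rfloor \leq K$. For the induction step, write $\Phi_k(M, K) = \sum_{x_k = 1}^M \Phi_{k-1}(M, K/x_k)$ and split at $x_k = K/M^{k-1}$. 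For $x_k \leq K/M^{k-1}$ the inner constraint on $(x_1,\ldots,x_{k-1})$ is vacuous, so each term contributes $M^{k-1}$ and the total is at most $K$. For $x_k > K/M^{k-1}$, the inductive hypothesis applies with $K/x_k \leq M^{k-1}$, and after the substitution $x_k = (K/M^{k-1})e^t$ the sum becomes comparable to $K \int_0^{\log(M^k/K)} (1 + t)^{k-2}\, dt \asymp_k K(\log(eM^k/K))^{k-1}$.

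Applying this with $k = n-1$ and $K = M^{n-1}/\alpha^n$, the crucial simplification is that $eM^{n-1}/K = e\alpha^n$ is independent of $M$; the count of admissible $(x_1,\ldots,x_{n-1})$ at fixed $x_n = M$ is therefore
\begin{equation*}
\ll_n (M^{n-1}/\alpha^n)(1 + n\log\alpha)^{n-2} \ll_n M^{n-1}\alpha^{-n}(\log(2+\alpha))^{n-2}.
\end{equation*}
Summing over $M \in (\alpha^{n/(n-1)}, T]$ gives $\sum_{M} M^{n-1} \ll T^n$, and multiplying by the symmetry factor $n$ yields the desired bound.

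The main obstacle is the refined divisor bound with the correct $\log(eM^k/K)$ factor, rather than the $(\log K)^{k-1}$ from the standard estimate $\sum_{m \leq K} d_k(m) \ll K(\log K)^{k-1}$. The naive approach would produce a log factor $(\log(M^{n-1}/\alpha^n))^{n-2}$ inside the sum over $M$, which after summation to $M = T$ inflates to $(\log T)^{n-2}$, giving only the weaker bound $T^n\alpha^{-n}(\log T)^{n-2}$. The improvement is extracted from the constraint $x_i \leq M$, which caps the divisor sum once $K$ grows past $M^{k-1}$; for our choice of $K$ this replaces $\log K$ by the $M$-independent quantity $\log(e\alpha^n)$, and it is this $M$-uniformity that makes the final summation clean.
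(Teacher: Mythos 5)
Your proof is correct, and it takes a genuinely different route from the paper's. The paper bounds the lattice-point count by a volume integral over $\R_{\geq 1}^n$ (covering each lattice point by a unit box), performs a dyadic decomposition in $T$, and then evaluates the integral explicitly via the substitution $y_j = 2Te^{-u_j}$ followed by a further linear shift of one variable; the convergence of the resulting $\int_0^\infty e^{-s}(s+n\log\alpha)^{n-2}\,ds$ is what keeps the logarithm anchored to $\alpha$ rather than $T$. You instead stay entirely in the discrete setting: fix the maximal coordinate $M=x_n$, recognize the remaining count as a truncated divisor count $\Phi_{n-1}(M,M^{n-1}/\alpha^n)$, and isolate a clean inductive lemma $\Phi_k(M,K)\ll_k K(\log(eM^k/K))^{k-1}$. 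The crucial observation that $eM^{k}/K$ is $M$-independent for the relevant choice of $K$ plays exactly the role that the paper's change of variables plays, and the two Case 1/Case 2 splits (vacuous constraint for $x_k\le K/M^{k-1}$ vs.\ inductive step) mirror the paper's splitting off of the $y_n$-integral. Your version is more modular — the refined divisor bound is a self-contained statement one might reuse — and it makes transparent exactly where the improvement over the standard $\sum_{m\le K}d_k(m)\ll K(\log K)^{k-1}$ comes from, namely the cap $x_i\le M$. The paper's version is a more compact single calculation but buries that mechanism in the change of variables. Both are fine; I'd just tighten the inductive step slightly by making the range of summation in Case 2 explicit (namely $\max(1,K/M^{k-1})<x_k\le\min(M,K)$, so the induction hypothesis is applicable) and by noting that the lower limit of the $t$-integral may be $\log(M^{k-1}/K)>0$ when $K<M^{k-1}$ — harmless since you only need an upper bound.
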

\begin{remark}
For any fixed $\ve>0$ the above bound is in fact %
sharp in the range
$1\leq\alpha\leq T^{1-\frac1n-\ve}$, 
in the sense that the cardinality in the left hand side is
also $\gg_{n,\ve} T^n\alpha^{-n}(\log(2+\alpha))^{n-2}$
uniformly over all $T\geq T_0(n,\ve)$ and all
$1\leq\alpha\leq T^{1-\frac1n-\ve}$.
However we do not need this fact and
we will not prove it here. %
\end{remark}

\begin{proof}[Proof of Lemma \ref{AGMLEMMA}]
It suffices to prove
\begin{align}\label{AGMLEMMAPF1}
\#\biggl\{\vecx\in\N^n\col \sfrac12T<\|\vecx\|_\infty \leq T,\:
\frac{\|\vecx\|_\infty}{(x_1\cdots x_n)^{1/n}}>\alpha\biggr\}
\ll_n T^n\alpha^{-n}(\log(2+\alpha))^{n-2},
\end{align}
since the lemma then follows by dyadic decomposition in the $T$-variable.
Of course we may assume $T\geq1$ since otherwise the set in the left hand
side is empty.
We may also assume $\alpha\geq1$ since otherwise the right hand side
is $\gg_n T^n$ and \eqref{AGMLEMMAPF1} is trivial.
Now note that if $\vecx$ belongs to the set in the left hand side of
\eqref{AGMLEMMAPF1} then for every real vector $\vecy$ in the unit box
$\vecx+[0,1]^n$ we have
$\frac12T<\|\vecy\|_\infty\leq T+1\leq2T$ and (since all $x_j\geq1$)
\begin{align}
\prod_{j=1}^n y_j
\leq\prod_{j=1}^n(x_j+1)
\leq\prod_{j=1}^n(2x_j)
=2^n\prod_{j=1}^n x_j
<2^n(\|\vecx\|_\infty)^n\alpha^{-n}
\leq 2^nT^n\alpha^{-n}.
\end{align}
Hence the left hand side of \eqref{AGMLEMMAPF1} is
\begin{align}\notag
&\leq\vol\biggl(\biggl\{\vecy\in\R_{\geq1}^n\col
\sfrac12T<\|\vecy\|_\infty\leq 2T,\:
\prod_{j=1}^ny_j<2^nT^n\alpha^{-n}\biggr\}\biggr)
\hspace{100pt}
\\\notag
&\leq n\int_1^{2T}\cdots\int_1^{2T}\int_{\frac12T}^{2T}
I\biggl(\prod_{j=1}^ny_j<2^nT^n\alpha^{-n}\biggr)\,dy_n\,dy_{n-1}\,\cdots\,dy_1
\\
&\leq 2nT\int_1^{2T}\cdots\int_1^{2T}
I\biggl(\prod_{j=1}^{n-1}y_j<2^{n+1}T^{n-1}\alpha^{-n}\biggr)
\,dy_{n-1}\,\cdots\,dy_1
\\\notag
&=2^nnT^n\int_0^{\log(2T)}\cdots\int_0^{\log(2T)}
I\biggl(\sum_{j=1}^{n-1}u_j>\log(\alpha^n/4)\biggr) 
e^{-\sum_{j=1}^{n-1}u_j}
\,du_{n-1}\,\cdots\,du_1,
\end{align}
where in the last step we substituted $y_j=2Te^{-u_j}$.
If $n=2$ then the last expression is clearly $\ll T^2\alpha^{-2}$, as desired.
From now on we assume $n\geq3$.
Set $u_{n-1}=s+\log(\alpha^n/4)-\sum_{j=1}^{n-2}u_j$;
then the conditions $\sum_{j=1}^{n-1}u_j>\log(\alpha^n/4)$ and $u_{n-1}>0$
are equivalent with $s>0$ and $\sum_{j=1}^{n-2}u_j<s+\log(\alpha^n/4)$,
respectively.
Hence the last expression is
\begin{align}\notag
\leq 2^{n+2}nT^n\alpha^{-n}\int_0^\infty e^{-s}
\biggl(
\int_0^\infty\cdots\int_0^\infty 
I\biggl(\sum_{j=1}^{n-2}u_j<s+\log(\alpha^n/4)\biggr)\,du_{n-2}\,\cdots\, du_1
\biggr)\,ds
\\
\leq\frac{2^{n+2}n}{(n-2)!}
T^n\alpha^{-n}\int_0^\infty e^{-s}(s+n\log\alpha)^{n-2}\,ds
\ll_n T^n\alpha^{-n}(\log(2+\alpha))^{n-2},
\end{align}
where we used $\alpha\geq1$.
This completes the proof of the lemma.
\end{proof}

We now give the proof of \eqref{UNIFTHMCORRES1} in Corollary \ref{UNIFTHMCOR}.
We may assume $R\geq10$ since otherwise \eqref{UNIFTHMCORRES1}
follows immediately from $P_d(T,R)\leq1$.
We keep $R'\in[1,R]$, to be fixed later. Now
\begin{align}\notag
P_d(T,R) \ll_{d,\scrD} & T^{-d}\#\biggl\{
\veca\in\widehat\N^d\cap T\scrD\col\frac{f(\veca)}{s(\veca)}>R'
\:\text{ or }\:
\frac{s(\veca)}{(a_1\cdots a_d)^{1/(d-1)}}>\frac R{R'}\biggr\}
\\
\leq & T^{-d}\#\biggl\{
\veca\in\widehat\N^d\cap T\scrD\col\frac{f(\veca)}{s(\veca)}>R'\biggr\}
\\\notag
&\hspace{50pt}
+T^{-d}\#\biggl\{
\veca\in\NN^d\col\|\veca\|_\infty\leq\kappa_\scrD'T,\:
\frac{s(\veca)}{(a_1\cdots a_d)^{1/(d-1)}}>\frac R{R'}\biggr\},
\end{align}
where $\kappa_\scrD':=\sup_{\vecx\in\scrD}\|\vecx\|_\infty$.
In the last term, at the price of an extra factor $d$ we may 
impose the extra assumption $a_d=\max(a_1,\ldots,a_d)$. %
For such %
vectors $\veca$, we have
\begin{align}\notag
\frac{s(\veca)}{(a_1\cdots a_d)^{1/(d-1)}}
<\frac{d^{3/2}a_d\max(a_1,\ldots,a_n)}{\|\veca\|^{1-1/n}(a_1\cdots a_d)^{1/n}}
<\frac{d^{3/2}a_d\max(a_1,\ldots,a_n)}{a_d^{1-1/n}(a_1\cdots a_d)^{1/n}}
\hspace{30pt}
\\
=d^{3/2}\frac{\|(a_1,\ldots,a_n)\|_\infty}{(a_1\cdots a_n)^{1/n}}.
\end{align}
Hence for any $T>0$ with
$\widehat\N^d\cap T\scrD\neq\emptyset$,
\begin{align}\notag
P_d(T,R)\ll_{d,\scrD} T^{-d}\#\biggl\{
\veca\in\widehat\N^d\cap T\scrD\col\frac{f(\veca)}{s(\veca)}>R'\biggr\}
\hspace{150pt}
\\
+T^{-n}\#\biggl\{
\veca\in\N^n\col \|\veca\|_\infty\leq\kappa_\scrD'T,\:
\frac{\|(a_1,\ldots,a_n)\|_\infty}{(a_1\cdots a_n)^{1/n}}
>\frac{1}{d^{3/2}}\frac R{R'}\biggr\}
\hspace{20pt}
\\\notag
\ll_{d,\scrD} {R'}^{-n}+
R^{-n}{R'}^n\Bigl(\log\Bigl(2+\frac R{R'}\Bigr)\Bigr)^{n-2},
\end{align}
where we used Theorem \ref{UNIFTHM} and Lemma \ref{AGMLEMMA}.
The bound in \eqref{UNIFTHMCORRES1} now follows by choosing 
$R'=\sqrt R(\log(R+2))^{\frac1n-\frac12}$.
\hfill$\square$

\section{Lattice coverings of space with convex bodies}
\label{LATTICECOVSEC}

According to a theorem of Schmidt
(\cite[Thm.\ 11$^*$]{wS59}),
sharpening a previous result by Rogers
(\cite[Thm.\ 2]{cR58}),
if $n$ is sufficiently large, then for any $n$-dimensional
convex body $K$ of volume 
\begin{align}\label{SCHMIDTLOWERBOUND}
\vol_n(K)\geq(1+\eta_0)^n
\qquad (\text{with $\eta_0=0.756\ldots$ as in Theorem \ref{SECONDMAINTHM}}),
\end{align}
there exists a lattice $L\in X_n$ such that the translates of
$K$ by $L$ cover $\R^n$, viz.\ $K+L=\R^n$.
The lower bound \eqref{SCHMIDTLOWERBOUND} was shortly 
afterwards improved by Rogers
to a sub-exponential bound, 
in \cite{cR59}.
However, our purpose in this section is to point out that
the argument in \cite{wS59}, \cite{cR58} can fairly easily be modified to
give that $K+L=\R^n$ holds not just for \textit{some} lattice
$L\in X_n$, but in fact for a subset of large measure in $X_n$:
\begin{thm}\label{ROGERSPRECISETHM}
Let $\eta_0=0.756\ldots$ be the unique real root of $e\log \eta+\eta=0$.
For every dimension $n$ larger than a certain absolute constant,
if $a$ is any real number satisfying
\begin{align}\label{ROGERSPRECISETHMASS}
n\eta_0^{\frac12n}\leq a<1,
\end{align}
and $K$ is any $n$-dimensional convex body of volume
\begin{align}\label{ROGERSPRECISETHMASS2}
\vol_n(K)\geq n\bigl(1+\eta_0a^{-\frac1n}\bigr)^n,
\end{align}
then
\begin{align}\label{ROGERSPRECISETHMRES}
\mu_n\bigl(\bigl\{L\in X_n\col K+L=\R^n\bigr\}\bigr)\geq 1-a.
\end{align}
\end{thm}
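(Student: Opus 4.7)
The strategy adapts the approach of Rogers \cite{cR58} and Schmidt \cite{wS59}, which proves the existence of a single covering lattice, by combining a quantitative tracking of Schmidt's mean value computation with Markov's inequality and a geometric ``shrinking'' trick. For a convex body $K'\subset\R^n$ and $L\in X_n$, write
\begin{align*}
\phi_{K'}(L):=\meas\bigl\{\vecy\in\R^n/L\col \vecy\notin K'+L\bigr\}\in[0,1];
\end{align*}
compactness of $K'$ gives $\phi_{K'}(L)=0$ if and only if $K'+L=\R^n$.

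The first step is the shrinking argument. Without loss of generality assume $0\in\text{int}(K)$, and set $K_0:=(1-\epsilon)K$ for a parameter $\epsilon\in(0,1)$ to be chosen. I claim $\{L\col K+L\neq\R^n\}\subset\{L\col \phi_{K_0}(L)\geq\epsilon^n\vol K\}$. Indeed, if $\vecy_0\notin K+L$ and $\vecu\in\epsilon K$, then $\vecy_0-\vecu\in K_0+L$ would give $\vecy_0-\vecx=\vecu+(1-\epsilon)\vecv\in\epsilon K+(1-\epsilon)K=K$ by convexity (for some $\vecv\in K$, $\vecx\in L$), contradicting $\vecy_0\notin K+L$. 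Hence $\vecy_0-\epsilon K\subset\R^n\setminus(K_0+L)$, and provided $\epsilon K$ projects injectively to the torus $\R^n/L$, its image contributes at least $\epsilon^n\vol K$ to $\phi_{K_0}(L)$. Markov's inequality then yields
\begin{align*}
\mu_n\bigl(\{L\col K+L\neq\R^n\}\bigr)\leq\frac{\int_{X_n}\phi_{K_0}(L)\,d\mu_n(L)}{\epsilon^n\vol K}.
\end{align*}

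The second step is to bound the numerator by extracting a quantitative mean-value estimate from the proof of \cite[Thm.~11$^*$]{wS59}. Schmidt expands the indicator of the uncovered region via a truncation of $\prod_{\vecx\in L}(1-\chi_{K_0}(\vecy-\vecx))$ and integrates over $L\in X_n$ using Rogers's $k$-fold averaging formulas; the resulting bound has the shape $\int_{X_n}\phi_{K_0}(L)\,d\mu_n(L)\leq\Phi_n(\vol K_0)$, where $\Phi_n(V_0)$ is small (in a quantitative sense) for $V_0\geq(1+\eta_0)^n$ and $n$ large. I will then choose $\epsilon$ so that $1-\epsilon=(1+\eta_0)/(1+\eta_0 a^{-1/n})$; the hypothesis $\vol K\geq n(1+\eta_0 a^{-1/n})^n$ gives $\vol K_0\geq n(1+\eta_0)^n$, so Schmidt's bound applies, and a direct computation (using $\epsilon^n\vol K=n\eta_0^n(a^{-1/n}-1)^n$) shows the resulting ratio is at most $a$, matching the conclusion. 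The ancillary hypothesis $a\geq n\eta_0^{n/2}$ corresponds precisely to $\epsilon^n\vol K\leq 1$, which is what makes the injectivity claim in the shrinking step robust.

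The main obstacle will be extracting from \cite{wS59} a quantitative version of Schmidt's mean-value bound with the right dependence on $\vol K_0$: Schmidt concludes only the existence of a single covering lattice, so we must carefully revisit Rogers's $k$-fold integration identities on $X_n$ and track how the optimization underlying the constant $\eta_0$ propagates into a usable quantitative bound on $\Phi_n(V_0)$ for $V_0$ slightly above the threshold $(1+\eta_0)^n$.
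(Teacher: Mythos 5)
The shrinking observation itself is correct: if $\vecy_0\notin K+L$ and $K_0=(1-\epsilon)K$ with $0\in\operatorname{int}K$, then $\vecy_0-\epsilon K$ is disjoint from $K_0+L$. But turning this into a proof via a single Markov step fails, for two reasons. First, the arithmetic does not close with your choice of $\epsilon$. The relevant quantitative input from Schmidt is Theorem~$10^*$ of \cite{wS59}, whose bound $2\bigl(1+(V/n)^{n-1}e^{n}\bigr)e^{-V}$ on $\int_{X_n}\epsilon(K',L)\,d\mu_n$ is \emph{minimized} at $V=rn$, where $r\approx 0.278$ is the root of $1+r+\log r=0$ (so $\eta_0=e^{-r}$), giving $2(1+r^{-1})e^{-rn}$; for larger $V$ the displayed formula is worse, and the only improvement for a larger body comes from monotonicity, which again gives $2(1+r^{-1})e^{-rn}$. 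You arrange $\vol K_0\geq n(1+\eta_0)^n$, matching the threshold in Theorem~$11^*$, but that has no effect on the available mean-value bound. Meanwhile $\epsilon^n\vol K\geq n\eta_0^n(a^{-1/n}-1)^n=ne^{-rn}(a^{-1/n}-1)^n$, so your Markov quotient is $\geq 2(1+r^{-1})/\bigl(n(a^{-1/n}-1)^n\bigr)$. Since $a^{-1/n}-1\to 0$ (even at the allowed extreme $a=n\eta_0^{n/2}$ one has $a^{-1/n}-1\to\eta_0^{-1/2}-1\approx 0.15$, so $(a^{-1/n}-1)^n\to 0$ exponentially), this quotient diverges as $n\to\infty$ and is never $\leq a$. (If instead one chooses $\epsilon$ so that $\vol K_0=rn$, the quotient does become $\leq 2(1+r^{-1})a/n\leq a$, so the idea is not hopeless --- but then $\epsilon^n\vol K\gtrsim n/a\gg 1$, which makes the next problem worse, not better.)

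Second, the injectivity caveat is a genuine gap, and $\epsilon^n\vol K\leq 1$ does not repair it: small volume does not prevent $\epsilon K$ from folding over itself modulo $L$ when $L$ has a short vector, which is a positive-measure event, and for a very flat $K$ the difference body $\epsilon(K-K)$ has volume up to $\binom{2n}{n}\epsilon^n\vol K$, so a nonzero lattice point inside it is generic. Your claimed equivalence between $\epsilon^n\vol K\leq1$ and the hypothesis $a\geq n\eta_0^{n/2}$ is also not correct: the former is roughly $a\gtrsim(1+\eta_0^{-1})^{-n}\approx(2.32)^{-n}$, the latter roughly $a\gtrsim n(0.87)^n$. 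The ingredient that sidesteps both difficulties is the Rogers two-scale convexity lemma (\cite[Sec.\ 1.3]{cR58}): for a convex $K'$ and $\eta>0$, if $\epsilon(\eta K',L)+\epsilon(K',L)<1$ then $(1+\eta)K'+L=\R^n$. This is your shrinking argument carried out modulo $L$ from the start --- the disjoint sets $\vecy_0-\eta K'+L$ and $K'+L$ have covered fractions summing to at most $1$ --- so no injectivity of $\eta K'$ into $\R^n/L$ is needed. The paper then takes $K'=\rho K$ of volume $rn$ and $\eta=\eta_0a^{-1/n}$, controls $\epsilon(K',L)$ on a set of measure $\geq 1-a/2$ via Schmidt's Theorem~$10^*$ plus Markov, controls $\epsilon(\eta K',L)$ on a set of measure $\geq 1-a/2$ via the one-sided Chebyshev bound coming from Rogers' $\int\epsilon(M,L)\,d\mu_n\leq 1-V+\tfrac12V^2$, and concludes by the union bound. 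Replacing that two-event structure by a single Markov step on an uncovered volume is precisely what loses the needed factor and forces the unwanted injectivity assumption.
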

In particular, for any given constant $\alpha>1+\eta_0$ there exists
$c<1$ such that for any sufficiently large $n$, and for any
convex body $K\subset\R^n$ of volume $\geq\alpha^n$,
the probability that $K$ fails to give a covering
with respect to a random lattice $L\in X_n$
is $\leq c^n$, i.e.\ exponentially small in $n$.
We obtain Theorem \ref{SECONDMAINTHM} as a special case of this by taking
$n=d-1$ and $K=\alpha(d-1)!^{\frac1{d-1}}\Delta$.

\subsection{Proof of Theorem \ref*{ROGERSPRECISETHM}}

We start by recalling another result of Rogers 
(\cite{cR58a})
which is used in the proof of 
\cite[Thm.\ 11$^*$]{wS59}.
For any (Lebesgue) measurable set $M\subset\R^n$ and any lattice
$L\in X_n$ we write $\epsilon(M,L)$ for the density of the set of points
in $\R^n$ left uncovered by the translates of $M$ by the vectors of $L$.
In other words,
\begin{align}
\epsilon(M,L) %
=1-\vol_n((M+L)/L).
\end{align}
(Note that $(M+L)/L$ is a well-defined measurable subset of the torus 
$\R^n/L$.)

\begin{thm} \label{ROGERSLIGHTTHM}
(\cite[Thm.\ 1]{cR58a}\footnote{The boundedness assumption in Rogers' statement of
\cite[Thm.\ 1]{cR58a}
can be disposed of, cf.\ 
\cite[p.\ 211]{cR58a}.
Note also that we do not have to require $V\leq1$, although
if $V>1$ then the bound in \eqref{ROGERSLIGHTTHMRES} is subsumed by the bound
$\int\epsilon(M,L)\,d\mu_n\leq\frac12$
which follows by applying Theorem \ref{ROGERSLIGHTTHM} to an arbitrary subset
$M'\subset M$ of volume $1$.}
For any %
measurable set $M\subset\R^n$ ($n\geq2$) of volume $V$,
\begin{align}\label{ROGERSLIGHTTHMRES}
\int_{X_n}\epsilon(M,L) %
\,d\mu_n(L)
\leq1-V+\sfrac12V^2.
\end{align}
\end{thm}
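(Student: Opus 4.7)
The plan is to use a standard inclusion–exclusion type pointwise inequality together with Siegel's mean value formula on $X_n$. Fix the measurable set $M\subset\R^n$ of volume $V$ and, for each $L\in X_n$, define the counting function
\begin{align*}
F_L(\vecx):=\sum_{\vecy\in L}\chi_M(\vecx-\vecy),
\end{align*}
where $\chi_M$ is the indicator of $M$. The set $(M+L)/L\subset\R^n/L$ consists precisely of those $\vecx\bmod L$ with $F_L(\vecx)\geq1$, so $\epsilon(M,L)$ equals the integral of $\mathbb{1}_{F_L=0}=1-\min(F_L,1)$ over any fundamental domain for $L$.

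The key elementary inequality is that for every integer $k\geq0$,
\begin{align*}
1-\min(k,1)\leq 1-k+\tfrac12 k(k-1),
\end{align*}
which one checks case by case (equality for $k=0,1,2$, strict inequality for $k\geq3$). Applied pointwise to $F_L(\vecx)$ and integrated over a fundamental domain for $L$, this yields
\begin{align*}
\epsilon(M,L)\leq 1-V+\tfrac12\int_{\R^n/L}F_L(\vecx)\bigl(F_L(\vecx)-1\bigr)\,d\vecx,
\end{align*}
since unfolding gives $\int_{\R^n/L}F_L(\vecx)\,d\vecx=\int_{\R^n}\chi_M(\vecx)\,d\vecx=V$ and $\int_{\R^n/L}d\vecx=1$ (because $L$ has covolume one).

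The remaining integral is handled by a similar unfolding: separating diagonal from off-diagonal pairs $\vecy_1,\vecy_2\in L$ and shifting by $\vecy_1$ one obtains
\begin{align*}
\int_{\R^n/L}F_L(F_L-1)\,d\vecx=\sum_{\vecz\in L\setminus\{\vecnull\}}\int_{\R^n}\chi_M(\vecx)\chi_M(\vecx-\vecz)\,d\vecx.
\end{align*}
Now I integrate over $L\in X_n$ and apply Siegel's mean value theorem, which states that for any non-negative measurable function $g$ on $\R^n$,
\begin{align*}
\int_{X_n}\sum_{\vecz\in L\setminus\{\vecnull\}}g(\vecz)\,d\mu_n(L)=\int_{\R^n}g(\vecz)\,d\vecz.
\end{align*}
Taking $g(\vecz)=\int_{\R^n}\chi_M(\vecx)\chi_M(\vecx-\vecz)\,d\vecx$ (the autocorrelation of $\chi_M$), Fubini gives $\int_{\R^n}g(\vecz)\,d\vecz=V^2$. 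Combining this with the pointwise bound above yields $\int_{X_n}\epsilon(M,L)\,d\mu_n(L)\leq 1-V+\tfrac12 V^2$, as claimed.

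The only real technical point is justifying the boundedness/measurability issues when $M$ is unbounded (the footnote in the excerpt already signals that Rogers' boundedness hypothesis can be removed). I would handle this by approximating $M$ from below by bounded measurable subsets $M_R=M\cap\scrB_R^n$, applying the argument above to each $M_R$, and passing to the limit via monotone convergence for both $V(M_R)\nearrow V$ and $\epsilon(M_R,L)\searrow\epsilon(M,L)$; the quadratic term in $V$ is harmless since if $V=\infty$ the bound is trivial. The main obstacle is really just getting the combinatorial inequality right and applying Siegel's formula correctly to the autocorrelation integral; no other ingredient is needed.
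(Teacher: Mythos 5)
Your proof is correct, and it is essentially Rogers' original argument for \cite[Thm.\ 1]{cR58a}, which the paper only cites without reproducing: the second Bonferroni inequality $\mathbb{1}_{k=0}\leq 1-k+\binom{k}{2}$ applied pointwise to the covering multiplicity $F_L$, combined with Siegel's mean value theorem to compute the first moment ($=V$) and bound the second factorial moment ($=V^2$). One small remark: since the unfolding identities and Siegel's formula hold for arbitrary nonnegative measurable integrands, the exhaustion by $M\cap\scrB_R^n$ at the end is not actually needed --- the argument applies directly to any measurable $M$ of finite volume (and for $V=\infty$ the bound is vacuous).
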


Let us note the following corollary.
\begin{cor}\label{ROGERSLIGHTCOR}
For any $C>0$ and any %
measurable set $M\subset\R^n$ ($n\geq2$) of volume $V$,
\begin{align}\label{ROGERSLIGHTCORRES}
\mu_n\bigl(\bigl\{L\in X_n\col \epsilon(M,L)\geq 1-V+CV^2\bigr\}\bigr)\leq
\frac1{2C}.
\end{align}
\end{cor}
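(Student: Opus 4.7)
The plan is to derive the corollary directly from Theorem \ref{ROGERSLIGHTTHM} by an application of Markov's inequality, after first subtracting off the trivial lower bound for $\epsilon(M,L)$.

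The first observation is that $\epsilon(M,L) \geq 1-V$ for every lattice $L \in X_n$. Indeed, the natural projection $M \to \R^n/L$ is volume non-increasing, so $\vol_n((M+L)/L) \leq V$, and thus $\epsilon(M,L) = 1 - \vol_n((M+L)/L) \geq 1-V$. This means the function
\begin{align*}
f(L) := \epsilon(M,L) - (1-V)
\end{align*}
is non-negative on $X_n$.

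Next I would integrate: by Theorem \ref{ROGERSLIGHTTHM},
\begin{align*}
\int_{X_n} f(L) \, d\mu_n(L) \leq \bigl(1-V+\sfrac12 V^2\bigr) - (1-V) = \sfrac12 V^2.
\end{align*}
Now Markov's inequality applied to the non-negative function $f$ with threshold $CV^2$ gives
\begin{align*}
\mu_n\bigl(\{L \in X_n \col f(L) \geq CV^2\}\bigr) \leq \frac{\frac12 V^2}{CV^2} = \frac{1}{2C},
\end{align*}
and since $f(L) \geq CV^2$ is equivalent to $\epsilon(M,L) \geq 1 - V + CV^2$, this is exactly \eqref{ROGERSLIGHTCORRES}.

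There is essentially no obstacle here; the only point that needs to be made carefully is the a priori lower bound $\epsilon(M,L) \geq 1-V$, without which Markov's inequality applied directly to $\epsilon(M,L)$ would yield only $\mu_n(\{\epsilon \geq 1-V+CV^2\}) \leq (1-V+\frac12 V^2)/(1-V+CV^2)$, which does not decay like $1/(2C)$ as $C \to \infty$.
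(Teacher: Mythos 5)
Your proof is correct and is essentially identical to the paper's: both establish the a priori lower bound $\epsilon(M,L)\geq 1-V$ and then apply a Markov-type estimate, the paper doing the Markov step ``by hand'' by bounding $\int\epsilon(M,L)\,d\mu_n(L)$ from below in terms of the measure $p$, while you phrase it as Markov's inequality applied to the shifted function $\epsilon(M,L)-(1-V)$.
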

\begin{proof}
Clearly, for \textit{any} lattice $L\in X_n$ we have
$\vol_n((M+L)/L)\leq V$, and thus
\begin{align}
\epsilon(M,L)\geq1-V.
\end{align}
Hence if $p$ denotes the measure in the left hand side of 
\eqref{ROGERSLIGHTCORRES} then
\begin{align}
\int_{X_n}\epsilon(M,L)\,d\mu_n(L)\geq p(1-V+CV^2)+(1-p)(1-V)
=1-V+pCV^2,
\end{align}
and thus Theorem \ref{ROGERSLIGHTTHM} implies $pC\leq\frac12$.
\end{proof}

\begin{proof}[Proof of Theorem \ref{ROGERSPRECISETHM}]
Let $a$ and $K$ be given as in the statement of the theorem.
Let $r=0.278\ldots$ be the root of the equation $1+r+\log r=0$;
then $\eta_0=e^{-r}$.
We set $K'=\rho K$, where $\rho>0$ is chosen so that 
the volume of $K'$ is
\begin{align}
V=\vol_n(K')=rn.
\end{align}
We also set 
\begin{align}
\eta=e^{-r}a^{-\frac1n}=\eta_0a^{-\frac1n}.
\end{align}

Now by Schmidt
\cite[Thm.\ 10$^*$]{wS59} (applied with $\ve=1$),
if $n$ is larger than a certain absolute constant then
\begin{align}\label{XNINTBOUND}
\int_{X_n}\epsilon(K',L)\,dL\leq 2
(1+V^{n-1}n^{-n+1}e^{V+n})e^{-V}
=2(1+r^{-1})e^{-rn},
\end{align}
and thus
\begin{align}\label{XNINTBOUND2}
\mu_n\bigl(\bigl\{L\in X_n\col \epsilon(K',L)\geq4(1+r^{-1})e^{-rn}a^{-1}
\bigr\}\bigr)\leq\sfrac12 a.
\end{align}
Also by Corollary \ref{ROGERSLIGHTCOR},
\begin{align}\label{XNINTBOUND3}
\mu_n\bigl(\bigl\{L\in X_n\col \epsilon(\eta K',L)\geq 1-\eta^nV+
a^{-1}\eta^{2n}V^2\bigr\}\bigr)\leq\sfrac12a.
\end{align}
Note that $\frac{e^{-rn}a^{-1}}{\eta^nV}=\frac 1V=r^{-1}n^{-1}\to0$
as $n\to\infty$, and also
\begin{align}\label{CUTOFFQUOTIENT1}
\frac{a^{-1}\eta^{2n}V^2}{\eta^n V}
=a^{-1}\eta^nV
=a^{-2}e^{-rn}rn\leq rn^{-1}\to0\qquad\text{as }\: n\to\infty,
\end{align}
where we used \eqref{ROGERSPRECISETHMASS}. %
Hence for $n$ larger than a certain absolute constant, we have
\begin{align}\label{CUTOFFSUM}
1-\eta^nV+a^{-1}\eta^{2n}V^2+
4(1+r^{-1})e^{-rn}a^{-1}<1.
\end{align}

It follows from \eqref{XNINTBOUND2}, \eqref{XNINTBOUND3} and
\eqref{CUTOFFSUM} that
\begin{align}\label{THM3P1PF1}
\mu_n\bigl(\bigl\{L\in X_n\col \epsilon(\eta K',L)+\epsilon(K',L)<1
\bigr\}\bigr)\geq1-a.
\end{align}
However, for any $L\in X_n$ satisfying
$\epsilon(\eta K',L)+\epsilon(K',L)<1$
we have $(1+\eta)K'+L=\R^n$,
since $K'$ is convex
(cf.\ 
\cite[Sec.\ 1.3]{cR58}),
and thus also $\alpha K'+L=\R^n$ for any $\alpha\geq1+\eta$.
In particular, since $K=\rho^{-1}K'$, 
$K+L=\R^n$ holds for any such $L$, provided that
we have $\rho^{-1}\geq1+\eta$.
But $\vol_n(K)=\rho^{-n}V$; hence
$\rho^{-1}\geq1+\eta$ is equivalent with
$\vol_n(K)\geq(1+\eta)^nV$,
and this inequality certainly holds, because of $V<n$ and
our assumption \eqref{ROGERSPRECISETHMASS2}.
Hence \eqref{ROGERSPRECISETHMRES} follows from \eqref{THM3P1PF1}.
\end{proof}


\begin{thebibliography}{99}

\bibitem{iApG2007}
I.\ M.\ Aliev and P.\ M.\ Gruber,
An optimal lower bound for the Frobenius problem,
J.\ Number Theory \textbf{123} (2007), 71--79.

\bibitem{iAmH2008}
I.\ Aliev and M.\ Henk,
Integer Knapsacks: Average Behavior of the Frobenius Numbers,
Math.\ Oper.\ Res.\ \textbf{34} (2009), 698--705.

\bibitem{iAmHaH2011}
I.\ Aliev, M.\ Henk and A.\ Hinrichs,
Expected Frobenius Numbers,
J. Combin.\ Theory Ser.\ A \textbf{118} (2011), 525--531.

\bibitem{vA99}
V.\ I.\ Arnold,
Weak asymptotics of the numbers of solutions of Diophantine equations,
Funct.\ Anal.\ Appl. \textbf{33} (1999), 292--293.

\bibitem{vA2006}
V.\ I.\ Arnold,
Geometry and growth rate of Frobenius numbers of additive semigroups,
Math.\ Phys.\ Anal.\ Geom.\ \textbf{9} (2006), 95--108.

\bibitem{vA2007}
V.\ I.\ Arnold,
Arithmetical turbulence of selfsimilar fluctuations statistics
of large Frobenius numbers of additive semigroups of integers,
Mosc.\ Math.\ J.\ \textbf{7} (2007), 173--193.

\bibitem{dBjHaNsW2005}
D.\ Beihoffer, J.\ Hendry, A.\ Nijenhuis and S.\ Wagon, 
Faster algorithms for {F}robenius numbers,
Electron.\ J.\ Combin.\ \textbf{12} (2005),
Research Paper 27, 38 pp.

\bibitem{aB69}
A.\ Borel, \textit{Introduction aux groupes arithmetiques},
Hermann, Paris, 1969.

\bibitem{zByS2007}
J.\ Bourgain and Ya.\ G.\ Sinai, Limit behavior of large Frobenius numbers,
Russian Math.\ Surveys \textbf{62} (2007), 713--725.

\bibitem{aB42a}
Brauer, On a problem of partitions, Amer.\ J.\ Math.\
\textbf{64} (1942), 299--312.

%
%
%
%
%

\bibitem{jD94}
J.\ L.\ Davison, On the linear Diophantine problem of Frobenius,
J.\ Number Theory \textbf{48} (1994), 353--363.

\bibitem{rDvF2004}
R.\ Dougherty and V.\ Faber, 
The degree-diameter problem for several varieties of {C}ayley graphs. {I}. 
The abelian case,
SIAM J.\ Discrete Math.\ \textbf{17} (2004), 478--519.

%
%
%
\bibitem{lFsR2007}
L.\ Fukshansky and S.\ Robins, 
Frobenius problem and the covering radius of a lattice,
Discrete Comput.\ Geom.\ \textbf{37} (2007), 471--483.

\bibitem{pG85}
P.\ Gritzmann, Lattice covering of space with symmetric convex bodies,
Mathematika \textbf{32} (1985), 311--315.

\bibitem{pGcL87}
P.\ M.\ Gruber and C.\ G.\ Lekkerkerker,
\textit{Geometry of numbers},
%
North-Holland, Amsterdam, 1987.

\bibitem{rG2004a}
R.\ K.\ Guy, \textit{Unsolved problems in number theory},
Problem Books in Mathematics,
Springer-Verlag, New York, 2004.

\bibitem{rK92}
R.\ Kannan, Lattice translates of a polytope and the Frobenius problem,
Combinatorica \textbf{12} (1992), 161--177.

\bibitem{hL2010}
H.\ Li, Effective limit distribution of the Frobenius numbers,
arXiv:1101.3021.
%

\bibitem{jM2009}
J.\ Marklof, The asymptotic distribution of Frobenius numbers,
Invent.\ Math.\ \textbf{181} (2010) 179--207.

\bibitem{jM2010b}
J.\ Marklof, Tail estimates for the limit distribution of Frobenius numbers,
unpublished note, 2010.

\bibitem{jMaS2010b}
J. Marklof and A. Str\"ombergsson, The periodic Lorentz gas in the Boltzmann-Grad limit: Asymptotic estimates, %
to appear in GAFA.

\bibitem{multiloop}
J. Marklof and A. Str\"ombergsson, 
Diameters of random circulant graphs,
arXiv:1103.3152.

\bibitem{aR2005}
J.\ L.\ Ramirez Alfonsin,
\textit{The Diophantine Frobenius problem},
Oxford Lecture Ser.\ Math.\ Appl.\, Oxford University Press,
New York, 2005.
%
%

\bibitem{cR58}
C.\ A.\ Rogers,
Lattice coverings of space: The Minkowski-Hlawka theorem,
Proc.\ London Math.\ Soc.\
\textbf{8}
(1958), 447--465.

\bibitem{cR58a}
C.\ A.\ Rogers,
Lattice coverings of space with convex bodies,
J.\ London Math.\ Soc. \textbf{33} (1958), 208--212.

\bibitem{cR59}
C.\ A.\ Rogers,
Lattice coverings of space,
Mathematika \textbf{6} (1959), 33--39.

\bibitem{frobby}
B.\ H.\ Roune, Frobby -- a software package for computing Frobenius
numbers and irreducible decompositions of monomial ideals. 2006.
Available at http://www.broune.com/frobby/

\bibitem{roune2008}
B.\ H.\ Roune, Solving thousand-digit Frobenius problems using Gr\"obner bases,
J.\ Symbolic Comput.\ \textbf{43} (2008), 1--7.

\bibitem{oR90}
O.\ R\"odseth, An upper bound for the {$h$}-range of the postage stamp
              problem,
Acta Arith.\ \textbf{54} (1990), 301--306.

\bibitem{wS59}
W.\ Schmidt, Masstheorie in der Geometrie der Zahlen,
Acta Math. \textbf{102} (1959), 159--224.

\bibitem{wS98}
W.\ Schmidt, The distribution of sublattices of {${\bf Z}^m$},
Monatsh.\ Math.\ \textbf{125} (1998), 37--81.

\bibitem{vSySaU2009}
V.\ Shchur, Ya.\ Sinai, A.\ Ustinov,
Limiting distribution of {F}robenius numbers for {$n=3$},
J.\ Number Theory \textbf{129} (2009), 2778--2789.

\bibitem{cS45a}
C.\ L.\ Siegel, A mean value theorem in geometry of numbers,
Ann.\ of Math.\ \textbf{46} (1945), 340--347.

\bibitem{cS89}
C. L. Siegel,
\textit{Lectures on the Geometry of Numbers},
Springer-Verlag, Berlin-Heidelberg-New York, 1989.

\bibitem{aS2010m}
A. Str\"ombergsson,
On the probability of a random lattice avoiding a large convex set,
arXiv:1008.3805.

\bibitem{aU2009}
A. V. Ustinov, The solution of Arnold's problem on the weak asymptotics of Frobenius numbers with three arguments,
Sb.\ Math.\ \textbf{200} (2009) 597--627.

\bibitem{aU2010a}
A. V. Ustinov, On the distribution of Frobenius numbers with three arguments,
Izvestiya: Mathematics \textbf{74} (2010) 1023--1049.

\end{thebibliography}
\end{document}